\newcommand{\LM}{\mathsf{LM}}
\newcommand{\GGM}{\mathsf{GGM}}
\newcommand{\RM}{\mathsf{RM}}
\newcommand{\J}{\mathrel{\mathscr J}} % J - relation
\newcommand{\R}{\mathrel{\mathscr R}} % R - relation
\newcommand{\eL}{\mathrel{\mathscr L}} % L - relation
\newcommand{\HH}{\mathrel{\mathscr H}}
\newcommand{\inv}{^{-1}}
\newcommand{\p}{\varphi}
\newtheorem{Thm}{Theorem}[section]
\newtheorem{Prop}[Thm]{Proposition}
\newtheorem{Lemma}[Thm]{Lemma}
{\theoremstyle{definition}
}
{\theoremstyle{remark}
\newtheorem{Rmk}[Thm]{Remark}}
\newtheorem{Cor}[Thm]{Corollary}
{\theoremstyle{remark}
}
{\theoremstyle{remark}
}
\theoremstyle{remark}
\theoremstyle{remark}
\theoremstyle{remark}
\theoremstyle{remark}
\newtheorem*{Claim*}{Claim}}
\numberwithin{equation}{section}
\title[The minimal faithful degree of Rhodes semisimple semigroups]{On the minimal faithful degree of Rhodes semisimple semigroups}
\author{Stuart Margolis\and Benjamin Steinberg}
\date{\today}
\thanks{The second author was supported by a Simons Foundation Collaboration Grant, award number 849561.}
\keywords{Minimal faithful degree, semigroups}
\subjclass[2020]{20M30,20M20}
\begin{document}

\begin{abstract}
In this paper we compute the minimal degree of a faithful representation by partial transformations of a finite semigroup admitting a faithful completely reducible matrix representation over the field of complex numbers.  This includes all inverse semigroups, and hence our results generalize earlier results of Easdown and Schein on the minimal faithful degree of an inverse semigroup.  It also includes well-studied monoids like full matrix monoids over finite fields and the monoid of binary relations (i.e., matrices over the Boolean semiring).   Our answer reduces the computation to considerations of permutation representations of maximal subgroups that are faithful when restricted to distinguished normal subgroups.  This is analogous to (and inspired by) recent results of the second author on the minimal number of irreducible constituents in a faithful completely reducible complex matrix representation of a finite semigroup that admits one.  To illustrate what happens when a finite semigroup does not admit a faithful completely reducible representation, we compute the minimal faithful degree of the opposite monoid of the full transformation monoid.
\end{abstract}

\maketitle

\section{Introduction}

In this paper semigroups act on the right of a set. All semigroups in the paper are finite semigroups. Cayley's Theorem embedding a group $G$ into the symmetric group $S_{|G|}$ is one of the oldest theorems in group theory. The proof generalizes mutatis mutandis to
prove that if $M$ is a monoid, then $M$ embeds into the monoid $T_{|M|}$ of all functions from the underlying set of $M$ to itself (acting on the right). For
semigroups, one has to be a bit careful. First embed $S$ into the monoid $S^{1}$, where $1$ is an externally added identity element, and then $S$ is isomorphic to a subsemigroup of the image of $S^{1}$ in $T_{(|S^{1}|)}$ in the embedding above. The example of a left-zero semigroup of order $2$ shows that the addition of the identity element is sometimes necessary.

Recall that an inverse semigroup is a semigroup $S$ such that for all $s \in S$, there is a unique $s^{-1} \in S$, called the inverse of $s$, such that $ss^{-1}s=s$ and $s^{-1}ss^{-1}=s^{-1}$. Equivalently, it is a semigroup such that for all $s \in S$, there is a $t \in S$ such that $sts=s$ and such that the set $E(S)$ of idempotents of $S$ forms a semilattice, that is, every pair of idempotents of $S$ commute. The symmetric inverse monoid $SIM_n$, consisting of all partial bijections on a set of size $n$ to itself, plays the role of the symmetric group in inverse semigroup theory. It is indeed an inverse semigroup in which the inverse $f^{-1}$ of $f \in SIM_n$ is the usual set theoretic inverse partial function. The Preston-Wagner Theorem embeds any inverse semigroup $S$ into $SIM_{|S|}$. The proof is similar to that of Cayley's Theorem, but one must be clever in restricting the domains of elements in the Cayley embedding in order to get an embedding into the symmetric inverse monoid.

This leads naturally to the question of minimal embeddings of finite semigroups into full transformation monoids. If $S$ is a semigroup, we define $\mu(S)$ to be the least $n$ such that $S$ is isomorphic to a subsemigroup of $T_n$.

We will also be interested in minimal representations of semigroups by partial functions, that is, by minimal size embeddings into the monoid $PT_n$ of all partial functions on an $n$-element set (again acting on the right). We write $m(S)$ for this value. Clearly $T_n$ is a subsemigroup of $PT_n$. Furthermore
$PT_n$ can be identified with the stabilizer of $n+1$ in $T_{n+1}$ by completing $f \in PT_n$ to an element of $T_{n+1}$ by sending $n+1$ and all elements not in the domain of $f$ to $n+1$. Therefore, $m(S) \leq \mu(S) \leq m(S)+1$. If $S$ is an inverse semigroup, it is known that $m(S)$ is the least integer $n$ such that $S$ embeds into the symmetric inverse monoid $SIM_n$~\cite{petrich}.  If $G$ is a group, then it is not difficult to see that $\mu(G)=m(G)$ is the least $n$ such that $G$ embeds into the symmetric group $S_n$.  We use this fact without further mention.

The problem of determining $m(S)$ has proved to be quite difficult. The first paper on the subject for groups gave some foundational results and solved the problem for finite abelian groups~\cite{minimalperm}. If $G$ is a simple group, it is easy to see that the minimal degree of a faithful representation comes from a transitive representation and thus the problem of computing $\mu(G)$ is the same as finding a proper subgroup of minimal index. See~\cite{mindegsimple} and its bibliography for results for simple groups. More generally if $G$ is a subdirectly irreducible group then the minimal degree of a faithful representation is afforded by a transitive representation. In particular, this is true for all symmetric groups $S_n$, a result we use in this paper. For more results for groups see~\cite{easdownsemidirect} and its bibliography.

The first paper concerning minimal degrees of semigroups is Easdown's~\cite{Easdownfaithful}, where the minimal degree of a semilattice of groups is described. Schein~\cite{scheininvmindeg} reduces the problem of computing minimal degrees of inverse semigroups to a related problem in group theory. Given the difficulty of the problem for groups, this is the most general result that one can expect for inverse semigroups. The only paper beyond the class of inverse semigroups that we are aware of is~\cite{mindegrectband}.

In this paper we significantly extend the work beyond inverse semigroups. A key feature of representations of inverse semigroups and in particular groups is that every representation by partial bijections is a disjoint union of transitive representations. This is far from true for general semigroups. In this paper, we introduce the class of Rhodes semisimple semigroups and prove that the minimal degree of a faithful representation by (partial) functions is given by a disjoint union of transitive representations. This class includes, besides inverse semigroups, the monoid of all partial functions on a set and the monoid of all $n \times n$ matrices over a field and many more.

We do this by relativizing results from the representation theory of finite monoids over the complex numbers to the category of representations by functions and partial functions. In~\cite{Rhodeschar} Rhodes determined the smallest congruence $\equiv$, now called the (complex) Rhodes radical~\cite{repbook}, on a finite semigroup such that $S/{\equiv}$ has a faithful semisimple (i.e., completely reducible) representation over the complex numbers. In particular, a finite semigroup $S$ has a faithful semisimple complex representation if and only if $\equiv$ is the identity relation. We call such a semigroup a Rhodes semisimple semigroup. We prove that for this class indeed every minimal degree faithful representation by (partial) functions is given by a disjoint union of transitive representations. The results in this section depend on the semilocal theory of finite semigroups. We give all relevant details in the appendix to this paper.

Furthermore, we use tools from non-additive homological algebra, such as tensor products of (partial) $S$-sets, $S$ a semigroup, to describe all transitive representations. This allows us to formulate a reduction of the problem to a group theoretic one. Inverse semigroups are Rhodes semisimple and we prove that our condition is equivalent to Schein's~\cite{scheininvmindeg} for inverse semigroups.

We also consider representations in the monoids $T_{n}^{op}$ and $PT_n^{op}$, the opposites of the monoids of (partial) functions on an $n$-set. Equivalently, these are representations of finite semigroups acting on the left of a set by (partial) functions. For ease of discussion we define $\lambda(S)=\mu(S^{op})$ and $l(S)=m(S^{op})$ where $S^{op}$ is the opposite semigroup of $S$. For inverse semigroups (and thus for groups), $\lambda(S)=\mu(S)$ and $l(S)=m(S)$ since the inverse operation is an anti-isomorphism. For semigroups in general, the best one can say is that $\lambda(S) \leq 2^{\mu(S)}$, since if $S$ acts on the left of a set $X$, it acts by inverse image on the right of
the power set of $X$. For $T_{n}^{op}$, we prove that this is the best one can do. That is $\lambda(T_{n}) = 2^{n}$. That is related to the known result that there are regular languages $L$ whose  minimal automaton has $n$ states but its reverse language $L^{rev}$ has a minimal automaton with $2^{n}$ states.

The paper is structured as follows. In Section~\ref{s:rss} we introduce the class of Rhodes semisimple semigroups and prove that disjoint unions of transitive representations suffice for this class to find the minimal degree of a faithful representation. We use the well-known fact that all transitive representations of a semigroup are images of Sch\"utzenberger representations on regular $\mathscr{R}$-classes and we use the second author's transformation analogue~\cite{transformations} of Green's condensation theory for linear representations~\cite{Greenpoly} to find maximal quotients of transitive representations that do not change the underlying permutation group associated to the maximal subgroup of the $\mathscr{R}$-class. We compute $\mu(M_{n}(F))$, the size of the minimal degree of a faithful representation of the monoid of all $n \times n$ matrices over a field $F$. For simple Rhodes semisimple semigroups with $2$ $\mathscr{R}$-classes and $2$ $\mathscr{L}$-classes we have an explicit formula that heavily depends on the theory of permutation groups. We completely compute these minimal degrees of faithful representations in case the maximal subgroup is a symmetric group and show how it depends on (the necessarily cyclic) maximal subgroup of the idempotent generated semigroup. Surprisingly, the outer automorphism of $S_6$ comes into play in giving the complete solution.

In the last section we look at the aforementioned problem of computing the minimal degree of a faithful representation of $S^{op}$ as a function of that of $S$. As mentioned above, we prove that the size of the minimal representation of $T_{n}^{op}$ is $2^{n}$. We close with the Appendix on semi-local theory of finite semigroups. All background in semigroup theory can be found in~\cite{qtheor}, especially Chapter~4 and the appendices. For background on the representation theory of finite monoids see~\cite{repbook}.

\section{Rhodes semisimple semigroups} \label{s:rss}

In this section we introduce the class of Rhodes semisimple semigroups. This uses tools from the semilocal theory of finite semigroups~\cite[Section 4.6]{qtheor},~\cite[Chapters 7-8]{Arbib}. For the reader's convenience we have listed the main results in appendix~\ref{s:appendix}.

Let $S$ be a semigroup.  Associated to any regular $\mathscr J$-class $J$ of $S$ are two congruences $\equiv_J$ and $\equiv_{\RM,J}$ defined as follows.  Put $s\equiv_J t$ if, for all $x,y\in J$, we have $xsy\in J\iff xty\in J$ and if they are both in $J$, then $xst=xty$. In the literature this is often denoted $\equiv_{\GGM,J}$ but we abbreviate it to $\equiv_J$ for ease of notation. $\GGM$ stands for generalized group mapping. See the Appendix for further explanation.
 Let $s\equiv_{\RM,J} t$ if, for all $x \in J$, $xs\in J\iff xt\in J$ and if they are both in $J$, then $xs=xt$.  Equivalently, $s\equiv_{\RM,J} t$ if they act the same in the right Sch\"utzenberger representation of $S$ on some (equivalently any) $\mathscr R$-class of $J$.  Clearly ${\equiv_{\RM,J}}\subseteq {\equiv_J}$.  Put ${\equiv_{\GGM}}=\bigcap {\equiv_J}$ and ${\equiv_{\RM}}=\bigcap {\equiv_{\RM,J}}$ and note that ${\equiv_{\RM}}\subseteq {\equiv_{\GGM}}$.

Let us say that $S$ is \emph{Rhodes semisimple} if $\equiv_{\GGM}$ is the trivial congruence (i.e., the equality relation). By a result of Rhodes~\cite{Rhodeschar}, this is equivalent to $S$ having a faithful semisimple (i.e., completely reducible) representation over the complex numbers.

%This is equivalent to the Rhodes radical~\cite{AMSV} being trivial, that is, to $S$ having a faithful semisimple (i.e., completely reducible) representation over the complex numbers by a result of Rhodes~\cite{Rhodeschar}.

By a \emph{partial $S$-set}, we shall mean a finite set $\Omega$ with an action of $S$ on the right by partial mappings.  We shall say \emph{$S$-set} to mean a right action of $S$ by total mappings.  Note that partial $S$-sets can be identified with pointed $S$-sets, that is, $S$-sets with a distinguished point fixed by $S$; this point is often called a sink. The \emph{strong orbit} of $\alpha\in \Omega$ is the set $\mathscr O_{\alpha}=\{\beta\in \Omega\mid \alpha S^1=\beta S^1\}$.  We say the strong orbit is \emph{null} if $\mathscr O_{\alpha}S\cap \mathscr O_{\alpha}=\emptyset$ and is \emph{transitive} otherwise.  Note that $\mathscr O_{\alpha}$ is null if and only if $\mathscr O_{\alpha}=\{\alpha\}$ and $\alpha\notin \alpha S$.

Any strong orbit can be made into a partial $S$-set by restriction of the action, that is, if $\mathscr O$ is a strong orbit and $\beta\in \mathscr O$, we define \[\beta\circ s =\begin{cases}\beta s, & \text{if}\ \beta s\in \mathscr O, \\ \emptyset, & \text{else}\end{cases}\] for $s\in S$.   We shall say that $\Omega$ is \emph{transitive} if it consists of a single transitive strong orbit.  We shall say that $\Omega$ is \emph{semisimple} if it is a coproduct (disjoint union) of transitive actions, that is, each of its strong orbits is transitive and $S$-invariant (meaning, if $\alpha\in \mathscr O$ and $\alpha s$ is defined, then $\alpha s\in \mathscr O$).  For groups, every action is semisimple and the strong orbits are just the usual orbits.  Any action of an inverse semigroup by partial bijections is semisimple and the strong orbits are the usual orbits, but actions of inverse semigroups by arbitrary partial functions need not be semisimple.

If $\Omega$ is any partial $S$-set, we define its \emph{semisimplification} to be the coproduct $\Omega_{ss}$ of all the transitive strong orbits of $\Omega$ viewed as partial $S$-sets in their own right.  So as a set $\Omega_{ss} = \{\alpha\in \Omega\mid \alpha\in \alpha S\}$ but the action is given by
\[\alpha\circ s =\begin{cases}\alpha s, & \text{if}\ \alpha s\in \mathscr O_{\alpha}, \\ \emptyset, & \text{else.}\end{cases}\]
Note that $|\Omega_{ss}|\leq |\Omega|$.

Let us fix once and for all an idempotent $e_J$ from each regular $\mathscr J$-class $J$ and denote by $G_J$ the maximal subgroup at $e_J$ and by $R_J$ the $\mathscr R$-class of $e_J$.  Note that $G_J$ acts freely (i.e., with trivial point stabilizers) on the left of $R_J$. Furthermore $R_J$ is a transitive partial $S$-set called the \emph{right Sch\"utzenberger representation} of $S$ on $R_J$. It is known that up to isomorphism $R_J$ does not depend on the chosen $\mathscr{R}$-class of $J$.

When dealing with partial transformations, we shall often write that two quantities are equal where some of these quantities may be undefined.  The equality should always be interpreted as stating that either both sides are undefined or both sides are defined and equal.   Let us say that $s\in S$ \emph{annihilates} $\alpha\in \Omega$ if $\alpha s=\emptyset$ and it annihilates  $\Omega$ if $\Omega s=\emptyset$.  Clearly the annihilator of any element of $\Omega$ is a right ideal and the annihilator of $\Omega$ is a $2$-sided ideal. The following is well known, and essentially due to Rhodes, cf.~\cite{resultsonfinite}.  We include a proof for completeness.

\begin{Lemma}\label{l:apex}
Let $\Omega$  be a transitive partial $S$-set.  Then there is a unique minimal $\mathscr J$-class $J$ with $\Omega J\neq \emptyset$.  Moreover, $J$ is regular.  There is an $S$-equivariant surjection $\p\colon R_J\to \Omega$ (where equivariant means $\p(rs)=\p(r)s$ for all $r\in R_J$ and $s\in S$, with equality understood as above).  Consequently, $s\equiv_{\RM,J}t$ implies that $s,t$ act the same on $\Omega$.
\end{Lemma}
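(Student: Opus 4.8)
\emph{Proof plan.} The plan is to identify the claimed $\mathscr J$-class as an \emph{apex}: a $\mathscr J$-class that is minimal in the $\leq_{\mathscr J}$-order among those meeting $\Omega$ nontrivially. Minimality will be the engine driving every step. First I would use finiteness to choose a $\mathscr J$-class $J$ minimal among all $K$ with $\Omega K\neq\emptyset$; the feature to exploit repeatedly is that then every $K<_{\mathscr J}J$ annihilates $\Omega$. Throughout I will use that transitivity of $\Omega$ means $\Omega=\alpha S^1$ for each $\alpha\in\Omega$, and that $S$ is an ideal of $S^1$, together with stability of finite semigroups (so that $x\leq_{\mathscr R}y$ and $x\mathrel{\mathscr J}y$ force $x\mathrel{\mathscr R}y$).

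To show $J$ is regular I would pick $s\in J$ and $\alpha\in\Omega$ with $\beta:=\alpha s\neq\emptyset$. Since $\Omega=\beta S^1$, some $u\in S^1$ gives $\beta u=\alpha$, so $su\in S$ satisfies $\alpha(su)=\alpha$. Its idempotent power $e:=(su)^\omega\in S$ then fixes $\alpha$, and because $\alpha e=\alpha\neq\emptyset$ while $e\leq_{\mathscr J}su\leq_{\mathscr J}s$, minimality forces $e\in J$. Thus $J$ is regular, and I have an idempotent $e\in J$ and a point $\alpha_0\in\Omega$ with $\alpha_0 e=\alpha_0$.

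Next I would build the covering $\varphi\colon R_e\to\Omega$ on the $\mathscr R$-class $R_e$ of $e$ (which affords the same Sch\"utzenberger representation as $R_J$, by the iso-invariance noted above) via $\varphi(r)=\alpha_0 r$. Three checks finish the construction. For well-definedness, write $e=ry$ with $y\in S^1$ (as $r\mathrel{\mathscr R}e$); then $\alpha_0=\alpha_0 e=(\alpha_0 r)y$ forces $\alpha_0 r$ to be defined. For equivariance, if the Sch\"utzenberger action $r\cdot s$ is undefined then $rs\notin R_e$, hence $rs<_{\mathscr J}J$ (if $rs\mathrel{\mathscr J}r$ stability would give $rs\mathrel{\mathscr R}r$), so $\varphi(r)s=\alpha_0(rs)=\emptyset$ by minimality, matching; and if $r\cdot s=rs\in R_e$ equivariance is just associativity. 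For surjectivity, any $\beta=\alpha_0 t$ equals $\alpha_0(et)$, which is nonempty, so $et\not<_{\mathscr J}J$; thus $et\in J$ and, by stability ($et\leq_{\mathscr R}e$), $et\in R_e$, giving $\beta=\varphi(et)$.

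Uniqueness of $J$ then follows by a symmetric comparison: if $J'$ were another minimal class with idempotent $e'$ fixing $\alpha_0'$, then writing $\alpha_0'=\varphi(r)=\alpha_0 r$ shows $re'$ does not annihilate $\alpha_0$, so $re'\in J$ by minimality, while $re'\leq_{\mathscr J}J'$ yields $J\leq_{\mathscr J}J'$; the symmetric argument gives the reverse, so $J=J'$. The closing assertion is then immediate: if $s\equiv_{\RM,J}t$ then $r\cdot s=r\cdot t$ for all $r\in R_e$, so for $\beta=\varphi(r)$ equivariance gives $\beta s=\varphi(r\cdot s)=\varphi(r\cdot t)=\beta t$, and surjectivity of $\varphi$ spreads this over all of $\Omega$. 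I expect the main difficulty to be not conceptual but bookkeeping: keeping the partial-map conventions (definedness versus $\emptyset$) straight in the three checks, and invoking stability at exactly the right places to pin $R_e$ down inside $J$.
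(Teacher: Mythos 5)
Your proof is correct, and its core construction is the paper's: choose a minimal $\mathscr J$-class meeting $\Omega$, exhibit an idempotent of $J$ fixing a point $\alpha_0$, and map the Sch\"utzenberger representation onto $\Omega$ via $r\mapsto \alpha_0 r$, with equivariance and the final consequence handled exactly as you do. Where you differ is in all the supporting arguments, which you make pointwise rather than ideal-theoretic. The paper works with the ideal $I=S^1JS^1\setminus J$ and the fact that a transitive $\Omega$ has no nonempty proper $S$-invariant subsets: regularity comes from $\Omega=\Omega J$, hence $\Omega J^2=\Omega$, hence $J^2\cap J\neq\emptyset$ plus stability; uniqueness is done \emph{before} the surjection exists, via $\Omega=\Omega J'J\subseteq \Omega I=\emptyset$; surjectivity comes from $\Omega=\Omega e_J S=\Omega R_J$; and the fixed point $\alpha\in\Omega e_J$ is found using that the annihilator of $\Omega$ is a two-sided ideal. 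You instead get regularity from the idempotent power $e=(su)^\omega$, which has the nice side effect of producing the fixed point $\alpha_0$ at the same time; you get surjectivity from the direct computation $\beta=\alpha_0(et)$ with $et$ forced into $R_e$ by minimality plus stability; and you prove uniqueness \emph{after} building $\varphi$, letting the surjection itself compare the two candidate minimal classes (indeed, once you have $J\leq_{\mathscr J}J'$, minimality of $J'$ already finishes without the symmetric pass). Your route is marginally more elementary — no invariant-subset or ideal arithmetic — at the cost of more stability bookkeeping; the one cosmetic wrinkle is that you build $\varphi$ on $R_e$ for your own idempotent $e$ and then invoke the isomorphism-invariance of Sch\"utzenberger representations to transport it to the distinguished $R_J$, whereas the paper locates a fixed point of the chosen $e_J$ directly. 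Both arguments are complete and correct.
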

\begin{proof}
Choose $J$ minimal such that $J$ does not annihilate $\Omega$.
Let $I=S^1JS^1\setminus J$.  Then $I$ annihilates $\Omega$.  Note that since $\Omega$ is transitive, it has no nonempty proper $S$-invariant subsets. Since it is not annihilated by $J$, we have that $\Omega = \Omega S^1JS^1 = \Omega I\cup \Omega J=\Omega J$.  It follows that $\Omega J^2=\Omega$.  Therefore, $J^2\nsubseteq I$, and so $J^2\cap J\neq \emptyset$.  But then $J$ is regular by a standard fact about finite or stable semigroups, cf.~\cite[Appendix~A]{qtheor}. If $J'\neq J$ is another such minimal $\mathscr J$-class, then $\Omega J'=\Omega$ and $J'J\subseteq I$.  Thus $\Omega =\Omega J=\Omega J'J\subseteq \Omega I=\emptyset$.  This contradiction shows that $J$ is unique.

Since the annihilator of $\Omega$ is a two-sided ideal, no element of $J$ annihilates $\Omega$.  In particular, $\Omega e_J\neq \emptyset$ and so choose $\alpha\in \Omega e_J$.  Define $\p\colon R_J\to \Omega$ by $\p(r)=\alpha r$.  Since the annihilator of $\alpha$ is a right ideal and $e_J$ does not annihilate $\alpha$, $\alpha r\in \Omega$ is defined.  Thus $\p$ makes sense as a map.  Clearly, $\Omega = \Omega e_JS=\Omega R_J$ since $\Omega$ is transitive,  $\Omega e_JS\neq \emptyset$ and $e_JS\setminus R_J\subseteq I$.  For equivariance, if $rs\notin R_J$, then $rs\in I$ and so it annihilates $\Omega$.  Thus $\p(r)s =(\alpha r)s=\alpha (rs)=\emptyset$.  If $rs\in R_J$, then $\p(rs)$ is defined by the above and $\p(rs)=\alpha(rs)=(\alpha r)s=\p(r)s$.

Finally, suppose that $s\equiv_{\RM,J} t$ and let $\alpha\in \Omega$.  Then $\alpha=\p(r)$ for some $r\in R_J$ and hence $\alpha s =\p(r)s=\p(rs)=\p(rt)=\p(r)t=\alpha t$ where, as usual, either all expressions in this string of equalities are undefined, or they are all defined and equal.  This completes the proof.
\end{proof}

The $\mathscr J$-class $J$ appearing in the lemma is called the \emph{apex} of $\Omega$.

Our next proposition will allow us to reduce to considering semisimple actions for Rhodes semisimple semigroups.

\begin{Prop}\label{p:gotosemisimple}
Let $S$ be a semigroup such that $\equiv_{\RM}$ is the equality relation (e.g., a Rhodes semisimple semigroup).  If the action of $S$ on $\Omega$ is faithful, then so is the action on $\Omega_{ss}$.  Moreover, $|\Omega_{ss}|\leq |\Omega|$.
\end{Prop}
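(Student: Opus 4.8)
The bound $|\Omega_{ss}|\le|\Omega|$ is immediate, since as a set $\Omega_{ss}$ is the subset $\{\alpha\in\Omega\mid \alpha\in\alpha S\}$ of $\Omega$, so the entire content is the preservation of faithfulness, and the plan is to prove it through the relation $\equiv_{\RM}$. Suppose $s,t\in S$ act the same on $\Omega_{ss}$; since $\equiv_{\RM}$ is the equality relation, it suffices to show $s\equiv_{\RM,J}t$ for every regular $\mathscr J$-class $J$, for then $s=t$ and the action on $\Omega_{ss}$ is faithful. The reverse implication is the easy one: if $s\equiv_{\RM}t$ then, by Lemma~\ref{l:apex} applied to each transitive strong orbit $\mathscr O$ of $\Omega$ (whose apex $J_{\mathscr O}$ is regular and which is a quotient of $R_{J_{\mathscr O}}$), the elements $s$ and $t$ act the same on every $\mathscr O$, hence on $\Omega_{ss}=\coprod_{\mathscr O}\mathscr O$. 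Thus the real task is the converse direction, and this is precisely where faithfulness of $\Omega$ must enter: the two-element left-zero semigroup, for which $\equiv_{\RM}$ is not trivial, shows that agreement on $\Omega_{ss}$ alone does not force agreement on $\Omega$.

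The key elementary observation I would exploit is that the idempotent $e_J$ has all of its fixed points inside $\Omega_{ss}$: if $\alpha e_J=\alpha$ then $\alpha\in\alpha S$, so $\alpha$ lies in a transitive strong orbit, and that orbit has apex $\le_{\mathscr J}J$ because $e_J$ does not annihilate $\alpha$. Consequently the hypothesis that $s,t$ agree on $\Omega_{ss}$ already pins down how $s$ and $t$ move every $e_J$-fixed point, up to the usual ``stays in the orbit versus leaves the orbit'' truncation. I would therefore try to locate, for a class $J$ at which $s\not\equiv_{\RM,J}t$, an $e_J$-fixed point $\alpha$ at which the $R_J$-level disagreement survives, meaning that either $\alpha\circ s$ and $\alpha\circ t$ are both defined in $\mathscr O_\alpha$ but unequal, or exactly one of them leaves $\mathscr O_\alpha$; in either case $\alpha\circ s\ne\alpha\circ t$ in $\Omega_{ss}$, which is the desired contradiction.

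To produce such a witness I would choose $J$ maximal in the $\mathscr J$-order among regular classes with $s\not\equiv_{\RM,J}t$, and use faithfulness of $\Omega$ to get some $\beta$ with $\beta s\ne\beta t$. Maximality guarantees that $s,t$ already act identically on everything of apex strictly above $J$, and associativity of the action lets me rewrite the value of a ``crossing'' or null edge $\beta s$ through strictly lower strong orbits; together these should transport the disagreement at $\beta$ down to a disagreement at an $e_J$-fixed point, where it is visible in $\Omega_{ss}$ by the previous paragraph. The main obstacle is exactly this transport step: the data lost in passing from $\Omega$ to $\Omega_{ss}$ are precisely the values of $s,t$ on null orbits and on edges leaving a strong orbit, and one must show that modulo $\equiv_{\RM}$ these values are redundant. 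Controlling them via the maximal choice of $J$ together with the equivariant surjections $R_{J}\to\mathscr O$ of Lemma~\ref{l:apex}---so that a surviving difference on $R_J$ is forced onto an apex-$J$ orbit sitting inside $\Omega_{ss}$---is the crux of the argument, and is where both faithfulness of $\Omega$ and the triviality of $\equiv_{\RM}$ are genuinely used.
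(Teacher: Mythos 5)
Your setup is sound --- the bound is indeed trivial, and your observation that fixed points of idempotents of $J$ automatically lie in transitive strong orbits of apex $\leq_{\mathscr J} J$ (hence inside $\Omega_{ss}$) is correct and is exactly the kind of point at which a disagreement must be exhibited. But the proposal stops at the decisive moment: the ``transport step'' that you yourself flag as the crux is never carried out, and the route you sketch for it is the wrong one. Applying faithfulness to $s\neq t$ directly gives a witness $\beta$ with $\beta s\neq \beta t$ that may sit in a null orbit, with $\beta s,\beta t$ landing in orbits bearing no useful relation to $J$; neither maximality of $J$ nor the surjections $R_J\to\mathscr O$ of Lemma~\ref{l:apex} obviously lets you push that disagreement onto an idempotent-fixed point (note also that a faithful $\Omega$ need not contain any orbit of apex exactly $J$, so ``forced onto an apex-$J$ orbit sitting inside $\Omega_{ss}$'' is not available).

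The missing idea is to apply faithfulness not to the pair $(s,t)$ but to a pair of products manufactured inside $J$. Since $s\not\equiv_{\RM,J}t$, there is $x\in J$ with $xs\in J$ and $xs\neq xt$ \emph{as elements of $S$}; faithfulness of $\Omega$ then yields $\alpha$ with $\alpha xs\neq \alpha xt$. Taking an idempotent $e$ with $e\mathrel{\mathscr R} x$, one has $ex=x$, so replacing $\alpha$ by $\alpha e$ preserves the disagreement and arranges $\alpha e=\alpha$. Now stability gives $xs\mathrel{\mathscr R} x\mathrel{\mathscr R} e$, hence $u\in S^1$ with $xsu=e$, and the loop $\alpha\mapsto\alpha x\mapsto \alpha xs\mapsto \alpha xsu=\alpha$ shows that $\alpha$, $\alpha x$ and $\alpha xs$ all lie in one transitive strong orbit $\mathscr O$. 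At the point $\alpha x\in\Omega_{ss}$ the truncated actions of $s$ and $t$ then differ: $(\alpha x)\circ s=\alpha xs\in\mathscr O$, while $\alpha xt$ is either outside $\mathscr O$ (so $(\alpha x)\circ t$ is undefined) or inside $\mathscr O$ but unequal to $\alpha xs$. This closes the gap with no maximality assumption on $J$ at all; without some such mechanism for converting the semigroup-level disagreement $xs\neq xt$ into a disagreement at a point of a transitive strong orbit, your outline does not constitute a proof.
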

\begin{proof}
Suppose that $s\neq t$ in $S$.  Then we can find a regular $\mathscr J$-class $J$ with $s\not\equiv_{\RM,J} t$.  Then we can find $x\in J$ with $xs\in J$ and $xs\neq xt$ or vice versa.  Assume the former for convenience and let $e$ be an idempotent in the $\mathscr R$-class of $x$.  Choose $\alpha\in \Omega$ with $\alpha xs\neq \alpha xt$.  Then since $ex=x$, we have that $\alpha exs=\alpha xs\neq \alpha xt=\alpha ext$ and so we may assume without loss of generality that $\alpha e=\alpha$. Since $S$ is stable, we must have $xs\mathrel{\mathscr R} x\mathrel{\mathscr R} e$ and so we can find $u\in S^1$ with $xsu=e$.  Then $\alpha xsu=\alpha e=\alpha$ and so $\alpha, \alpha x,\alpha xs$ are all in the same transitive strong orbit $\mathscr O$.  Then either $\alpha xt\notin \mathscr O$ or $\alpha xs\neq \alpha xt\in \mathscr O$.  In either case, the transitive  strong orbit $\mathscr O$ distinguishes $s,t$ in their action on $\Omega_{ss}$.  Thus $\Omega_{ss}$ is faithful.  The final statement is trivial.
\end{proof}

As a corollary, we see that a minimal degree faithful representation of a Rhodes semisimple semigroup by partial transformations can be achieved by a semisimple action.

\begin{Cor}\label{c:achieve.semisimple}
Let $S$ be a semigroup such that $\equiv_{\RM}$ is the equality relation (e.g., a Rhodes semisimple semigroup).   Then the minimal degree of a faithful action of $S$ on a set by partial transformations is realized by a semisimple action.
\end{Cor}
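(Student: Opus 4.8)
The plan is to deduce the corollary directly from Proposition~\ref{p:gotosemisimple}, which already carries all of the substance. First I would fix a faithful action of $S$ on a set $\Omega$ by partial transformations whose degree $|\Omega|$ is minimal among all faithful representations of $S$ by partial transformations. Such a minimizer exists because $S$ is finite and admits at least one faithful action by partial transformations (as recalled in the introduction, $S$ embeds into some $T_n$, and hence into some $PT_n$), so the set of degrees of faithful partial transformation representations is a nonempty set of positive integers and therefore has a least element.

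Next, since by hypothesis $\equiv_{\RM}$ is the equality relation, the hypotheses of Proposition~\ref{p:gotosemisimple} are met for this $\Omega$. The proposition then tells me two things: the induced action on the semisimplification $\Omega_{ss}$ is again faithful, and $|\Omega_{ss}|\leq |\Omega|$. By its very construction, $\Omega_{ss}$ is the coproduct of the transitive strong orbits of $\Omega$, so it is a semisimple action in the sense defined above. Combining the minimality of $|\Omega|$ with the inequality $|\Omega_{ss}|\leq |\Omega|$ forces $|\Omega_{ss}|=|\Omega|$; hence $\Omega_{ss}$ is a faithful semisimple action of the same (minimal) degree, which is exactly the assertion of the corollary.

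I do not anticipate any genuine obstacle, as this is a formal consequence of the preceding proposition. The only points that are not literally contained in Proposition~\ref{p:gotosemisimple} are the existence of a degree-minimizing faithful action and the observation that passing to $\Omega_{ss}$ cannot increase the degree; both are routine, the latter being recorded in the proposition's final sentence. The one thing worth stating explicitly in the write-up is that equality of degrees is what certifies that the minimum is \emph{realized} by a semisimple action, rather than merely approached by one.
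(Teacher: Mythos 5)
Your proof is correct and matches the paper's approach: the paper states this corollary without a separate proof, treating it exactly as you do---as an immediate consequence of Proposition~\ref{p:gotosemisimple}, by semisimplifying a minimal-degree faithful action and noting that faithfulness is preserved while the degree cannot increase. Your added remarks on the existence of a minimizer and on equality of degrees are routine but harmless elaborations of the same argument.
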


Note that the two-element null semigroup $\{0,a\}$ with $a^2=0=0a=a0$ does not have a faithful semisimple action.  Its unique transitive action is the trivial action on a one-point set.  In fact, by Lemma~\ref{l:apex} if $S$ has a faithful semisimple action, then $\equiv_{\RM}$ must be the equality relation.  Conversely, if $\equiv_{\RM}$ is the equality relation, then the coproduct of the right Sch\"utzenberger representations of $S$ on regular $\mathscr R$-classes is a faithful semisimple action.

We shall now focus our attention on the case of semisimple actions and give a criterion for faithfulness.  The idea follows that used by the second author in~\cite{GZR} for matrix representations and generalizes the approaches of Easdown~\cite{Easdownfaithful} for representations of semilattices of groups by partial transformations and Schein for inverse semigroups~\cite{scheininvmindeg}.

Let us say that a regular $\mathscr J$-class $J$ is \emph{$\RM$-reducible} if ${\bigcap_{J'<J}\equiv_{\RM,J'}}\subseteq {\equiv_{\RM,J}}$.  Otherwise, we say that $J$ is \emph{$\RM$-irreducible}.  Note that $J$ is $\RM$-irreducible if and only if there exist $s,t\in S$ with $s\not\equiv_{\RM,J} t$ and $s\equiv_{\RM,J'} t$ for all $J'<J$.  From this it is immediate that the intersection defining $\equiv_{\RM}$ can be restricted to $\RM$-irreducible $\mathscr J$-classes.

\begin{Prop}\label{p:irredundant}
The congruence $\equiv_{\RM}$ is the intersection of the $\equiv_{\RM,J}$ as $J$ runs over the $\RM$-irreducible regular $\mathscr J$-classes.  Moreover, no $\RM$-irreducible regular $\J$-class $J$ can be omitted from this intersection without changing the congruence.
\end{Prop}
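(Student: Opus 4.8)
The plan is to treat the two assertions separately. The identity $\equiv_{\RM}=\bigcap_J\equiv_{\RM,J}$ over $\RM$-irreducible $J$ is the ``immediate'' observation flagged just above the statement, and I would record it via a Noetherian induction; the irredundancy is the substantive part, which I would establish by producing, for each $\RM$-irreducible class $J_0$, an explicit pair collapsed by every $\equiv_{\RM,J}$ with $J\neq J_0$ but separated by $\equiv_{\RM,J_0}$.

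For the first assertion, one inclusion is free (intersecting over a subfamily can only enlarge the relation), so writing $\rho=\bigcap_{J\ \RM\text{-irred}}\equiv_{\RM,J}$ it suffices to show $\rho\subseteq\equiv_{\RM,J}$ for \emph{every} regular $J$. I would prove this by induction on $J$ in the $\mathscr J$-order, smallest classes first. If $J$ is $\RM$-irreducible then $\equiv_{\RM,J}$ is literally a factor of $\rho$. If $J$ is $\RM$-reducible then by definition $\bigcap_{J'<J}\equiv_{\RM,J'}\subseteq\equiv_{\RM,J}$; the induction hypothesis gives $\rho\subseteq\equiv_{\RM,J'}$ for each $J'<J$, whence $\rho\subseteq\bigcap_{J'<J}\equiv_{\RM,J'}\subseteq\equiv_{\RM,J}$. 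Intersecting over all regular $J$ yields $\rho\subseteq\equiv_{\RM}$, and the reverse inclusion is trivial.

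For the irredundancy, fix an $\RM$-irreducible regular class $J_0$ and use the noted characterization to pick $s,t\in S$ with $s\not\equiv_{\RM,J_0}t$ and $s\equiv_{\RM,J'}t$ for all $J'<J_0$. Testing $\equiv_{\RM,J_0}$ on the Sch\"utzenberger representation $R_{J_0}$, choose $r_0\in R_{J_0}$ with $r_0\cdot s\neq r_0\cdot t$ (one side possibly undefined). Since $J_0$ is regular, the $\mathscr L$-class of $r_0$ inside $J_0$ contains an idempotent $f$, and then $r_0 f=r_0$. I propose the modified pair $s'=fs$, $t'=ft$, and verify it according to the position of a regular class $J$ relative to $J_0$. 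For $J\not\le J_0$, left multiplication by $f\in J_0$ annihilates $R_J$: for $r\in R_J$ one has $rf\le_{\mathscr J}J_0$, so $rf\notin R_J$ and $r\cdot f=\emptyset$, giving $r\cdot s'=r\cdot t'=\emptyset$ and hence $s'\equiv_{\RM,J}t'$. For $J<J_0$, since $\equiv_{\RM,J}$ is a congruence, $s\equiv_{\RM,J}t$ gives $fs\equiv_{\RM,J}ft$. At $J_0$ itself, $r_0 f=r_0$ yields $r_0\cdot s'=r_0\cdot s$ and $r_0\cdot t'=r_0\cdot t$, so $s',t'$ still differ on $R_{J_0}$ and $s'\not\equiv_{\RM,J_0}t'$. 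As every regular $J\neq J_0$ is either below $J_0$ or not $\le J_0$, the pair $(s',t')$ lies in $\equiv_{\RM,J}$ for all regular $J\neq J_0$, in particular for all $\RM$-irreducible $J\neq J_0$, yet $(s',t')\notin\equiv_{\RM,J_0}\supseteq\equiv_{\RM}$; thus dropping $J_0$ strictly enlarges the intersection.

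The main obstacle I anticipate is the simultaneous control exerted by the single left multiplier $f$: it must kill the action on every class not $\le J_0$, be harmless below $J_0$ (which the congruence property delivers), and preserve the separating behaviour at $J_0$ (which $r_0 f=r_0$ delivers). The crucial structural input making the three cases cohere is the regularity of $J_0$, which supplies the idempotent $f$ in the $\mathscr L$-class of $r_0$ — an element that is a right identity for $r_0$ yet $\mathscr J$-below everything outside $J_0$.
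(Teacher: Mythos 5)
Your proof is correct and takes essentially the same route as the paper: the equality of congruences follows from well-foundedness of the $\mathscr J$-order (the paper phrases your Noetherian induction as a minimal-counterexample argument), and irredundancy is obtained by left-translating the separating pair into $S J_0 S$ so that classes not below $J_0$ annihilate it, while the congruence property protects classes below $J_0$ and an idempotent of $J_0$ shows the separation at $J_0$ survives. The only cosmetic difference is that the paper forms the pair $(xs,xt)$ from the witness $x$ itself and verifies the difference via an idempotent $e$ with $ex=x$, whereas you form $(fs,ft)$ from an idempotent $f$ with $r_0f=r_0$; both hinge on regularity of $J_0$ in exactly the same way.
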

\begin{proof}
To establish the equality of congruences, we just need that if $s\not\equiv_{\RM} t$, then $s\not\equiv_{\RM,J} t$ for some $\RM$-irreducible $\mathscr J$-class $J$.  Choose $J$ minimal with $s\not\equiv_{\RM,J} t$.  Then $s\equiv_{\RM, J'} t$ for all $J'<J$ by choice of $J$, and hence $J$ is $\RM$-irreducible.   This proves the first statement.

Suppose now that $J$ is $\RM$-irreducible.  We show that there are elements $s,t\in S$ with $s\not\equiv_{\RM,J} t$ and with $s\equiv_{\RM,J'} t$ for all $J\neq J'$. Indeed, by definition of $\RM$-irreducibility, we can find $s,t\in S$ with $s\not\equiv_{\RM,J} t$ and $s\equiv_{\RM, J'} t$ for all $J'<J$.  Then we can find $x\in J$ with $xs\in J$ and $xs\neq xt$ (or vice versa, but we can assume this case without loss of generality).  Then $xs\equiv_{\RM,J'} xt$ for all $J'<J$ and $xs\not\equiv_{\RM,J} xt$ because if $e$ is an idempotent $\mathscr R$-equivalent to $x$, then $exs=xs\in J$ and $exs=xs\neq xt=ext$.  But since $xs,xt\in SJS$, if $J'\nleq J$, then $xs\equiv_{\RM, J'} xt$ because $J'xs\cap J'=\emptyset=J'xt\cap J'$.  Thus $xs\equiv_{\RM,J'} xt$ for every $J'\neq J$.  This shows that $\equiv_{\RM,J}$ cannot be omitted from the intersection.
\end{proof}

Thus the $\RM$-irreducible $\J$-classes provide the unique minimal way to express $\equiv_{\RM}$ as an intersection of congruences of the form $\equiv_{\RM,J}$.

If $J$ is an $\RM$-irreducible $\mathscr J$-class, put \[M_J=\{g\in G_J\mid g\equiv_{\RM,J'} e_J, \forall J'<J\}.\]  Note that $M_J$ is a normal subgroup of $G_J$.

If $\Omega$ affords a semisimple action of $S$ and $J$ is a regular $\mathscr J$-class, then $\Omega_J$ will denote the union of all strong orbits of $\Omega$ with apex $J$.  Note that $\Omega_J$ is $S$-invariant.  Also note that $\Omega_Je_J$ is a $G_J$-set.

\begin{Thm}\label{t:faithful.crit}
Let $S$ be a Rhodes semisimple semigroup with a semisimple action on a finite set $\Omega$.  Then the action is faithful if and only if, for each $\RM$-irreducible $\mathscr J$-class $J$:
\begin{enumerate}
\item $\Omega_J\neq \emptyset$;
\item $\Omega_Je_J$ is a faithful $M_J$-set.
\end{enumerate}
\end{Thm}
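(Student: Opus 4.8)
The plan is to prove both directions using the tools already developed, principally Lemma~\ref{l:apex} (which shows that $s \equiv_{\RM,J} t$ forces $s,t$ to act identically on any transitive strong orbit with apex $J$) and Proposition~\ref{p:irredundant} (which reduces $\equiv_{\RM}$ to the intersection of $\equiv_{\RM,J}$ over $\RM$-irreducible $J$). Since $S$ is Rhodes semisimple, $\equiv_{\GGM}$ is trivial; but faithfulness of a \emph{semisimple} action is governed by the finer $\equiv_{\RM}$ congruence, and I will want to use that for such a semigroup $\equiv_{\RM}$ is also the equality relation (this follows from the discussion after Corollary~\ref{c:achieve.semisimple}, and faithfulness of the semisimple action is exactly the statement that the action separates $\equiv_{\RM}$-inequivalent pairs).

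For the forward direction, I would argue contrapositively. Suppose (1) fails for some $\RM$-irreducible $J$, i.e.\ $\Omega_J = \emptyset$. By Proposition~\ref{p:irredundant} there exist $s,t$ with $s \not\equiv_{\RM,J} t$ but $s \equiv_{\RM,J'} t$ for all $J' \neq J$. Then on every strong orbit of $\Omega$, whose apex is some $J' \neq J$ (since $\Omega_J$ is empty), Lemma~\ref{l:apex} forces $s$ and $t$ to act identically; hence $s,t$ act the same on all of $\Omega$, contradicting faithfulness. Now suppose (2) fails: $\Omega_J e_J$ is not a faithful $M_J$-set, so there is $g \in M_J \setminus \{e_J\}$ acting trivially on $\Omega_J e_J$. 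I would promote $g$ to a pair of elements of $S$ that the action fails to separate. The natural candidate is the pair $(g, e_J)$, or more precisely elements of $S$ restricting to $g$ and $e_J$ on $R_J$. Since $g \in M_J$, by definition $g \equiv_{\RM,J'} e_J$ for all $J' < J$, so $g$ and $e_J$ already agree on strong orbits with apex strictly below $J$; on orbits with apex incomparable to or above $J$ the elements $g, e_J \in G_J$ lie in $J$ and act as the empty map; and on orbits with apex exactly $J$, the map $\p\colon R_J \to \mathscr O$ from Lemma~\ref{l:apex} together with $g$ acting trivially on $\Omega_J e_J$ should force agreement. This gives a nontrivial pair indistinguishable on $\Omega$, contradicting faithfulness.

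For the converse, assume (1) and (2) hold for every $\RM$-irreducible $J$; I must show the action separates any $s \neq t$. Since $\equiv_{\RM}$ is trivial, there is an $\RM$-irreducible $J$ with $s \not\equiv_{\RM,J} t$. By (1), pick a strong orbit $\mathscr O \subseteq \Omega_J$ with apex $J$ and its surjection $\p\colon R_J \to \mathscr O$. As in the proof of Proposition~\ref{p:gotosemisimple}, I would produce $x \in J$ and $\alpha \in \mathscr O e_J$ with $\alpha x s \neq \alpha x t$ reflecting $xs \neq xt$ in $J$; the point is to transfer the $\RM,J$-inequivalence down through $\p$ into an actual inequality of values. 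The delicate part is handling the relationship between $\equiv_{\RM,J}$-inequivalence and the $M_J$-action: the congruence $\equiv_{\RM,J}$ sees the whole Sch\"utzenberger action on $R_J$, whereas hypothesis (2) only controls the $M_J$-orbit structure, so I must argue that the combination of (1) (which handles the ``positional'' part of the action via $\p$) and (2) (which handles the ``group'' part via the free $G_J$-action on $R_J$, with $M_J$ being precisely the part of $G_J$ not already detected below $J$) suffices to separate $s$ and $t$.

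The main obstacle I anticipate is exactly this last coordination in the converse: disentangling how an $\RM,J$-difference between $s$ and $t$ decomposes into a part already visible on lower $\mathscr J$-classes versus a genuinely new part that must be captured by the faithful $M_J$-action on $\Omega_J e_J$. Concretely, given $xs, xt \in R_J$ one writes $xt = g \cdot xs$ for a unique $g \in G_J$ (using that $G_J$ acts freely on the left of $R_J$); the crux is to show that whenever $s \not\equiv_{\RM,J} t$ this discrepancy $g$ can be arranged to lie in $M_J$ — so that faithfulness of the $M_J$-action does the separating — which is precisely where the definition of $M_J$ as $\{g \in G_J \mid g \equiv_{\RM,J'} e_J,\ \forall J' < J\}$ and the $\RM$-irreducibility of $J$ must be invoked together. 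I expect this to require a careful but elementary Green's-relations argument of the kind already appearing in the proof of Proposition~\ref{p:irredundant}.
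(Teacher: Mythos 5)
Your forward direction is essentially the paper's argument and is sound: elements of $SJS$ that are $\equiv_{\RM,J'}$-equivalent for all $J'<J$ agree on every strong orbit whose apex is different from $J$, and a difference on $\Omega_J$ pushes into $\Omega_Je_J$ via $(\alpha e_J)g=\alpha g$. One citation caveat: for (1) you need a pair agreeing under $\equiv_{\RM,J'}$ for \emph{all} regular $J'\neq J$, including $\RM$-reducible ones, since $\Omega$ may contain orbits with reducible apex; the bare statement of Proposition~\ref{p:irredundant} only controls the intersection over irreducible classes, but the pair $xs,xt\in SJS$ constructed in its proof does have the stronger property, so this is repairable by citing the proof rather than the statement.

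The converse, however, has a genuine gap, sitting exactly where you flagged ``the crux,'' and your proposed repair cannot work. Writing $xt=g\cdot xs$ with $g\in G_J$ requires $xs\HH xt$, because the $G_J$-orbits on $R_J$ are precisely the $\HH$-classes. But from $s\not\equiv_{\RM,J}t$ you only get $x\in J$ with $xs\in J$ and $xs\neq xt$; then either $xt\notin J$, or $xt\in J$ with $xs\R xt$ by stability but possibly in a \emph{different} $\mathscr L$-class, in which case there is no group discrepancy at all, and a Green's quotient of an apex-$J$ orbit can identify points lying over distinct $\mathscr L$-classes without $M_J$ detecting anything. No argument using only triviality of $\equiv_{\RM}$, irreducibility, and freeness can close this hole, because the theorem is \emph{false} if ``Rhodes semisimple'' is weakened to ``$\equiv_{\RM}$ is equality'': $T_n$ ($n\geq 2$) has $\equiv_{\RM}$ trivial, its unique $\RM$-irreducible class is the minimal ideal of constant maps with $G_J=M_J$ trivial, and the one-point total action is semisimple with apex that ideal, so it satisfies (1) and (2), yet it is not faithful. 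The paper closes the gap by invoking Rhodes semisimplicity a \emph{second} time, through the two-sided congruence: from $xs\neq xt$ it finds a regular $J_0$ with $xs\not\equiv_{J_0}xt$, shows $J_0=J$, and hence obtains $a,b\in J$ with $asb\in J$ and $asb\neq atb$. Two-sided translation forces the dichotomy: either $atb\notin J$, settled by annihilation together with (1) (namely $\Omega_Jasb\neq\emptyset=\Omega_Jatb$), or $asb\HH atb$, and only then does Green's lemma convert the difference into some $g\in G_J\setminus\{e_J\}$, which lies in $M_J$ because $\equiv_{\RM,J'}$ is a congruence. Your proof needs this $\equiv_J$ (i.e.\ $\GGM$) step; without it, ``positional'' differences across the $\mathscr L$-classes of $J$ are invisible to conditions (1) and (2).
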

\begin{proof}
We begin with an observation.  If $J$ is a regular $\J$-class and $s,t\in SJS$ with $s\equiv_{\RM,J'} t$ for all $J'<J$, then $s,t$ act the same on any strong orbit $\mathscr O$ with apex different from $J$.  Indeed, if $\mathscr O$ has apex $J'$ with $J'\nleq J$, then $s,t$ both annihilate $\mathscr O$.  On the other hand, if $J'<J$, then $s,t$ act the same on $\mathscr O$ by Lemma~\ref{l:apex}.

Suppose now that $\Omega$ is faithful.  If $J$ is an $\RM$-irreducible $\mathscr J$-class, then we can find $s,t\in S$ with $s\not\equiv_{\RM, J} t$ and $s\equiv_{\RM,J'} t$ for all $J'<J$.  Then we can find $x\in J$ with $xs\in J$ and $xs\neq xt$ or vice versa.  Note that $xs\equiv_{\RM,J'}xt$ for all $J'<J$ and $xs,xt\in SJS$.  Therefore, $xs,xt$ act the same on any strong orbit with apex different from $J$ by the observation.  Since $\Omega$ is faithful and $xs\neq xt$, we deduce that $\Omega_J\neq \emptyset$ as it must distinguish these elements.  This establishes (1).

If $g\in M_J$ with $g\neq e_J$, then since $g\equiv_{\RM,J'} e_J$ for all $J'<J$ and $g,e_J\in SJS$, it follows from the observation that $g,e_J$ act the same on all strong orbits not belonging to $\Omega_J$.  Hence they act differently on $\Omega_J$.  But if $\alpha\in \Omega_J$ with $\alpha g\neq \alpha e_J$, then $(\alpha e_J)g=\alpha(e_Jg)=\alpha g\neq \alpha e_J$, and so $g$ acts nontrivially on $\Omega e_J$, yielding (2).

Assume now that (1) and (2) hold.  If $s\neq t$ in $S$, then since $S$ is Rhodes semisimple and ${\equiv_{\RM}}\subseteq {\equiv_{\GGM}}$, we have that $\equiv_{\RM}$ is the equality relation.  Thus we can find a regular $\J$-class  $J$ with $s\not\equiv_{\RM,J} t$ and we may assume that $J$ is minimal with this property.  But then $s\equiv_{\RM, J'} t$ for all $J'<J$ and so $J$ is $\RM$-irreducible.  There exists $x\in J$ with $xs\in J$ and $xs\neq xt$ or vice versa.  Without loss of generality we may assume that $xs\in J$.   Since $S$ is Rhodes semisimple, we can find a regular $\mathscr J$-class $J_0$ with $xs\not\equiv_{J_0} xt$.  Since ${\equiv_{\RM, J_0}}\subseteq {\equiv_{J_0}}$ and $xs\equiv_{\RM,J'} xt$ for all $J'<J$, we cannot have $J_0<J$.  But since $xs,xt\in SJS$, we also have $xs\equiv_{J'} xt$ whenever $J'\nleq J$ as $uxsv,uxtv\notin J'$ for any $u,v\in J'$.  Thus we must have that $xs\not\equiv_J xt$.  Hence we can find $u,v\in J$ with $uxsv\in J$ and $uxsv\neq uxtv$ (or vice versa).  Resetting notation we can assume that there are $a,b\in J$ with $asb\in J$ and $asb\neq atb$ (putting $a=ux$, $b=v$ and perhaps interchanging $s$ and $t$).  We shall also use that $asb\equiv_{\RM,J'} atb$ for all $J'<J$.

If $atb\notin J$, then since $atb\in SJS$, we deduce that $\Omega_J asb\neq \emptyset$ and $\Omega_J atb=\emptyset$.  Thus $asb, atb$ act differently on $\Omega$, whence $s,t$ act differently on $\Omega$.  So we may assume that $asb, atb\in J$ and $asb\neq atb$.  Then $asb\HH atb$ by stability, and so, by Green's lemma, we can find $\ell,r\in J$ so that $\theta\colon H_{asb}\to G_J$ given by $\theta(x) = \ell xr$ is a bijection with $\theta(asb)=e_J$.  Then $g=\theta(atb)\neq e_J$ and $g=\ell atbr \equiv_{\RM,J'} \ell asbr =e_J$ for all $J'<J$.  Thus $g\in M_J$ and so by assumption, there is $\alpha\in \Omega_Je_J$ with $\alpha g\neq \alpha$.  Therefore, $\alpha \ell atbr \neq \alpha =\alpha e_J = \alpha \ell asbr$, and so $s,t$ act differently on $\Omega$.  Thus $\Omega$ is faithful.
\end{proof}

A \emph{congruence} on a partial $S$-set $\Omega$ is an equivalence relation $\sim$ such that if $\alpha\sim \beta$ and $s\in S$, then $\alpha s\sim \beta s$ in the sense that either both sides are undefined or both sides are defined and equivalent.  Then $\Omega/{\sim}$ becomes a partial $S$-set via $[\alpha]s=[\alpha s]$, where $[x]$ denotes the $\sim$ class of $x$, which is well defined (and where $[\emptyset]$ is interpreted as $\emptyset$).  If $f\colon \Omega\to \Lambda$ is a surjective $S$-equivariant map of partial $S$-sets, then $\Lambda\cong \Omega/{\sim}$ for the congruence given by $\alpha\sim \beta$ if $f(\alpha)=f(\beta)$.

If $\Omega$ is any partial $S$-set and $e\in S$ is an idempotent, then $\Omega e$ is a partial $eSe$-set with $e$ acting as the identity; in particular, it is a $G_e$-set for the maximal subgroup $G_e$ at $e$.  In~\cite{transformations} the second author showed that there is a unique largest congruence on $\Omega$ whose restriction to $\Omega e$ is the equality relation.  We call it the \emph{Green's congruence} associated to $e$ because it is the congruence analogue of Green's submodule of a module over a ring associated to an idempotent in~\cite{Greenpoly}.  We shall call $\Omega/{\sim}$ the \emph{Green's quotient} of $\Omega$ with respect to $e$.

\begin{Prop}\label{p:greens.cong}
If $\Omega$ is a partial $S$-set and $e\in S$ is an idempotent, define $\sim$ by $\alpha\sim \beta$ if $\alpha se=\beta se$ for all $s\in S$ (with our usual understanding of equality).  Then $\sim$ is the largest congruence such that $\sim$ is the equality relation on $\Omega e$.
\end{Prop}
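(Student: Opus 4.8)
The plan is to verify in turn the three properties that characterize the Green's congruence: that $\sim$ is a congruence on $\Omega$, that its restriction to $\Omega e$ is trivial, and that it contains every congruence with that last property. The whole argument rests on one elementary identity: for $\alpha\in\Omega$ and $s,t\in S$ one has $(\alpha t)se=\alpha(ts)e$ (by associativity of the action, with the usual convention on undefined values). Reflexivity, symmetry and transitivity are immediate, since $\sim$ is literally the kernel of the assignment $\alpha\mapsto(\alpha se)_{s\in S}$.

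The congruence property is the heart of the matter, and the step I expect to be the main obstacle. Suppose $\alpha\sim\beta$ and fix $t\in S$. For every $s\in S$ the identity above gives $(\alpha t)se=\alpha(ts)e=\beta(ts)e=(\beta t)se$, because $ts$ again lies in $S$ and $\alpha\sim\beta$; hence, \emph{whenever both $\alpha t$ and $\beta t$ are defined}, they are $\sim$-related. The delicate point is the matching of definedness demanded by the notion of congruence on a partial $S$-set, namely that $\alpha t$ is defined if and only if $\beta t$ is. I would handle this by passing to the pointed $S$-set $\Omega^{\ast}=\Omega\sqcup\{\ast\}$, in which ``undefined'' becomes ``equal to the sink $\ast$''. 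On the total $S$-set $\Omega^{\ast}$ the relation $\alpha\approx\beta\iff\alpha se=\beta se$ for all $s$ is visibly a congruence, by the same re-indexing $s\mapsto ts$ and with no definedness subtlety, and $\sim$ is exactly its restriction to $\Omega$. What must then be confirmed is that $\{\ast\}$ is a single $\approx$-class, equivalently that no $\gamma\in\Omega$ has $\gamma se$ undefined for every $s$; this is precisely the condition ensuring that $\approx$ descends to a genuine partial-set congruence on $\Omega$, and it is where the structure of the action enters.

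The remaining two properties are routine. For the restriction to $\Omega e$, take $\gamma,\gamma'\in\Omega e$ with $\gamma\sim\gamma'$; since both are fixed by $e$, specializing the defining condition to $s=e$ yields $\gamma=\gamma ee=\gamma'ee=\gamma'$, so $\sim$ is the equality relation on $\Omega e$. For maximality, let $\theta$ be any congruence whose restriction to $\Omega e$ is trivial and suppose $\alpha\mathrel{\theta}\beta$. For each $s\in S$, applying the congruence property of $\theta$ twice gives $\alpha se\mathrel{\theta}\beta se$ with matching definedness; when both are defined they lie in $\Omega e$, so triviality of $\theta$ there forces $\alpha se=\beta se$, and when both are undefined this holds trivially. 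Thus $\alpha\sim\beta$, proving $\theta\subseteq{\sim}$. Combining the three facts identifies $\sim$ as the largest congruence that is the equality relation on $\Omega e$.
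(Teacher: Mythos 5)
Your three structural steps are the right ones, and two of them coincide exactly with the paper's proof: the restriction to $\Omega e$ is handled by specializing to $s=e$ (giving $\gamma=\gamma ee=\gamma'ee=\gamma'$), and maximality by applying a competing congruence $\theta$ to get $\alpha se\mathrel{\theta}\beta se$ with matching definedness, hence $\alpha se=\beta se$. For the congruence property, the paper simply writes $(\alpha t)se=\alpha(ts)e=\beta(ts)e=(\beta t)se$ and concludes $\alpha t\sim\beta t$ ``with our usual understanding''; you prove the same identity but then, correctly reading the paper's definition of congruence as demanding that $\alpha t$ is defined if and only if $\beta t$ is, you reduce this definedness matching to the claim that no $\gamma\in\Omega$ has $\gamma se$ undefined for every $s\in S$. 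The gap is that you never prove that claim: your argument stops at ``what must then be confirmed is\dots''. So the proposal is incomplete precisely at the step you yourself identified as the heart of the matter.

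Moreover, that claim cannot be proved at this level of generality, because it is false. Take $S=\{e,t\}$ to be the two-element semilattice with $tt=t$ and $ee=et=te=e$, and let $\Omega=\{\alpha,\beta\}$ with $\beta t=\beta$ and all other values ($\alpha t$, $\alpha e$, $\beta e$) undefined; this is a genuine partial $S$-set and $e$ is idempotent. Here $\Omega e=\emptyset$ and every translate $\gamma se=\gamma e$ is undefined, so $\alpha\sim\beta$; yet $\alpha t$ is undefined while $\beta t=\beta$ is defined. Thus both elements of $\Omega$ are glued to your sink $\ast$, and $\sim$ fails definedness matching; in fact the largest congruence (in the paper's strict sense) that is equality on $\Omega e$ is here the equality relation, not $\sim$, so no cleverer argument can close your reduction in full generality. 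What your extra care has actually uncovered is that the proposition tacitly assumes a non-degeneracy condition---every $\gamma\in\Omega$ has $\gamma se$ defined for some $s$---which the paper's one-line ``trivially'' passes over, but which does hold for every partial $S$-set to which the paper applies the result: in $X\otimes_{G_J}R_J$ each element $x\otimes r$ satisfies $(x\otimes r)ue_J=x\otimes e_J$ for some $u\in S$, since $r\mathrel{\mathscr R}e_J$. Under that hypothesis your pointed-set argument closes immediately; without it, your proposal (and, read literally, the statement itself) has a genuine hole.
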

\begin{proof}
Trivially, if $\alpha\sim \beta$ and $t\in S$, then $(\alpha t)se=\alpha (ts)e =\beta (ts)e=(\beta t)se$ (with our usual understanding of equality), and so $\alpha t\sim \beta t$ (with our usual understanding). Also note that if $\alpha,\beta\in \Omega e$ and $\alpha\sim\beta$, then $\alpha =\alpha ee=\beta ee=\beta$, and so $\sim$ restricts to equality on $\Omega e$.   Suppose that $\equiv$ is a congruence which restricts to equality on $\Omega e$.  Then if $\alpha\equiv \beta$ and $s\in S$, then $\alpha se\equiv \beta se$, and so $\alpha se=\beta se$ (with our usual understanding of equality) by assumption on $\equiv$.  Thus ${\equiv}\subseteq {\sim}$.
\end{proof}

We recall some basic facts about tensor products. We restrict to the case when one of the factors is a right Sch\"utzenberger representation $R_J$ and the tensor product is over a group. Let $X$ be a right $G_J$-set.  Then the tensor product $X\otimes_{G_J}R_J$ is the orbit space $(X\times R_J)/G_J$ where $G_J$ acts on the right of $(X\times R_J)$ via $(x,r)g = (xg,g\inv r)$.  We write $x\otimes r$ for the orbit of $(x,r)$ and note that $xg\otimes r=x\otimes gr$. Since $R_J$ is a partial $G_{J}$-$S$ biset, $X\otimes_{G_J} R_J$ is a partial $S$-set via $(x\otimes r)s= x\otimes rs$ where as usual, if $rs$ is undefined we interpret $(x\otimes r)s$ as undefined.  This does not depend on the choices as $S$ acts on the right of $X\times R_J$ by acting trivially on $X$ and in its usual way on $R_J$ (with the action defined only when it is defined on both coordinates) and the $G_J$-action is by automorphisms of this partial $S$-set.   Since the action of $G_J$ on the left of $R_J$ is free, if $T$ is a transversal for the set of $G_J$-orbits on $R_J$ (i.e., $T$ is a set of $\HH$-class representatives of the $\HH$-classes in $R_J$) with $e_J\in T$, then every element of $X\otimes_{G_J} R_J$ can be written uniquely in the form $x\otimes t$ with $t\in T$.  In particular, $|X\otimes_{G_J} R_J|=|X|\cdot |T|$ and $|T|$ is the number of $\mathscr L$-classes in $J$.  Note that $X\cong (X\otimes_{G_J} R_J)e_J$ as a $G_J$-set via $x\mapsto x\otimes e_J$.

\begin{Prop}\label{p:quotient.of.tensor}
Let $\Omega$ be a semisimple partial $S$-set such that each strong orbit has apex $J$.  Then there is a surjective $S$-equivariant mapping \[\theta\colon \Omega e_J\otimes_{G_J} R_J\to \Omega,\] which is injective on $(\Omega e_J\otimes_{G_J} R_J)e_J$.  Therefore, the Green's quotient $(\Omega e_J\otimes_{G_J} R_J)/{\sim}$ is a quotient of $\Omega$ satisfying $[(\Omega e_J\otimes_{G_J} R_J)/{\sim}]e_J\cong \Omega e_J$ as $G_J$-sets.
\end{Prop}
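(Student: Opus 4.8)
The plan is to define $\theta$ by the evident formula $\theta(\alpha\otimes r)=\alpha r$ for $\alpha\in\Omega e_J$ and $r\in R_J$, verify it has all the stated properties, and then extract the ``Therefore'' clause formally from the universal property of the Green's congruence recorded in Proposition~\ref{p:greens.cong}. First I would check well-definedness on the orbit space: since the defining relation of the tensor product identifies $(\alpha g,r)$ with $(\alpha,gr)$ for $g\in G_J$, well-definedness reduces to the associativity identity $(\alpha g)r=\alpha(gr)$, which holds because $G_J\subseteq S$ and $\Omega$ is an $S$-set. That $\alpha r$ is in fact defined (nonempty) for every $r\in R_J$ I would argue exactly as in Lemma~\ref{l:apex}: as $r\R e_J$ we have $e_J\in rS^1$, so if $r$ annihilated $\alpha$ then, the annihilator of $\alpha$ being a right ideal, $e_J$ would annihilate $\alpha$, contradicting $\alpha=\alpha e_J$.

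For equivariance I would fix $\alpha\in\Omega e_J$, note that $\alpha$ lies in a transitive strong orbit $\mathscr{O}$ whose apex is $J$ by hypothesis, and observe that $r\mapsto\alpha r$ is precisely the equivariant surjection $R_J\to\mathscr{O}$ furnished by Lemma~\ref{l:apex}. In particular, the delicate partial-function bookkeeping --- that $rs\notin R_J$ forces $\alpha(rs)=\emptyset$, matching the undefinedness of $(\alpha\otimes r)s$ --- is already contained in that lemma, since such products $rs$ lie in the two-sided ideal annihilating $\mathscr{O}$. Surjectivity is then immediate: picking one $\alpha_{\mathscr{O}}\in\mathscr{O}e_J$ for each strong orbit, Lemma~\ref{l:apex} gives $\theta(\alpha_{\mathscr{O}}\otimes R_J)=\mathscr{O}$, and $\Omega$ is the disjoint union of its strong orbits. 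For injectivity on the $e_J$-part I would invoke the identification $\Omega e_J\cong(\Omega e_J\otimes_{G_J}R_J)e_J$, $x\mapsto x\otimes e_J$, recorded just before the proposition; under it $\theta(x\otimes e_J)=xe_J=x$, so $\theta$ restricts to the identity of $\Omega e_J$ and is in particular injective there.

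It remains to deduce the ``Therefore'' clause. Writing $\Lambda=\Omega e_J\otimes_{G_J}R_J$, I would introduce the congruence $\equiv_\theta$ given by $u\equiv_\theta v\iff\theta(u)=\theta(v)$, so that $\theta$ induces an isomorphism $\Lambda/{\equiv_\theta}\cong\Omega$. Injectivity of $\theta$ on $\Lambda e_J$ says exactly that $\equiv_\theta$ restricts to equality on $\Lambda e_J$, whence the maximality in Proposition~\ref{p:greens.cong} yields $\equiv_\theta\subseteq{\sim}$. Consequently the identity on $\Lambda$ descends to a surjective $S$-equivariant map $\Omega\cong\Lambda/{\equiv_\theta}\to\Lambda/{\sim}$, exhibiting the Green's quotient as a quotient of $\Omega$. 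The $G_J$-set isomorphism $(\Lambda/{\sim})e_J\cong\Omega e_J$ then follows because $\sim$ is equality on $\Lambda e_J$, so the quotient map restricts to a bijection $\Lambda e_J\to(\Lambda/{\sim})e_J$, and $\Lambda e_J\cong\Omega e_J$ by the identification above.

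The step I expect to be the main obstacle is the equivariance check in the presence of undefined (partial) values; everything hinges on routing it through the equivariant surjection of Lemma~\ref{l:apex} rather than treating the cases by hand, after which the concluding quotient statement is a purely formal consequence of the maximality property of the Green's congruence.
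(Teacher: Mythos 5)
Your proposal is correct and follows essentially the same route as the paper: define $\theta(\alpha\otimes r)=\alpha r$, reduce well-definedness, definedness, equivariance, and surjectivity to the construction in the proof of Lemma~\ref{l:apex} (applied orbit-by-orbit, using $S$-invariance of strong orbits), obtain injectivity on the $e_J$-part from the identification $x\mapsto x\otimes e_J$, and derive the ``Therefore'' clause from the maximality property in Proposition~\ref{p:greens.cong}. The only cosmetic difference is that the paper first builds the map on $\Omega e_J\times R_J$ and then factors it through the tensor product, whereas you verify well-definedness directly from the relation $xg\otimes r=x\otimes gr$; these are the same argument.
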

\begin{proof}
If $\alpha\in \Omega e_J$, the proof of Lemma~\ref{l:apex} shows there is a surjective $S$-equivariant map $\p_{\alpha}\colon R_J\to \mathscr O_{\alpha}$ given by $\p_{\alpha}(r)=\alpha r$.  Hence we get a surjective $S$-equivariant map $\Phi\colon \Omega e_J\times R_J\to \Omega$ (with the action of $S$ on $\Omega e_J$ trivial) given by $(\alpha,r)\mapsto \alpha r$ by viewing  $\Omega e_J\times R_J$ as a coproduct of $|\Omega e_J|$ copies of $R_J$.  But if $g\in G_J$, then $\Phi(\alpha g,g\inv r) = \alpha gg\inv r=\alpha e_Jr=\alpha r$ and so $\Phi$ factors through a surjective $S$-equivariant map $\theta\colon \Omega e_J\otimes_{G_J} R_J\to \Omega$ with $\theta(\alpha\otimes r) = \alpha r$.  In particular, if $\alpha \in \Omega e_J$, then  $\theta(\alpha\otimes e_J)= \alpha e_J=\alpha$ and so $\theta$ is injective on $(\Omega e_J\otimes_{G_J} R_J)e_J$.  Thus $(\Omega e_J\otimes_{G_J} R_J)/{\sim}$ is a quotient of $\Omega$ by Proposition~\ref{p:greens.cong}.  The final statement follows from Proposition~\ref{p:greens.cong} since $(\Omega e_J\otimes_{G_J} R_j)e_J\cong \Omega e_J$ as a $G_J$-set.
\end{proof}

Suppose now that $S$ is Rhodes semisimple.  Theorem~\ref{t:faithful.crit} implies that if $\Omega$ is a faithful semisimple partial $S$-set of minimal degree, then $\Omega_J=\emptyset$ whenever $J$ is $\RM$-reducible.  Proposition~\ref{p:quotient.of.tensor} shows that $(\Omega_J e_J\otimes_{G_J} R_J)/{\sim}$ is a quotient of $\Omega_J$ with $[(\Omega_J e_J\otimes_{G_J} R_J)/{\sim}]e_J\cong \Omega_Je_J$ as a $G_J$-set and hence  replacing each $\Omega_J$ by $(\Omega_J e_J\otimes_{G_J} R_J)/{\sim}$ for $J$ an $\RM$-irreducible $\J$-class will result in a faithful partial $S$-set by Theorem~\ref{t:faithful.crit}.  By minimality it follows that we must in fact have $\Omega_J\cong (\Omega_J e_J\otimes_{G_J} R_J)/{\sim}$.   Thus a minimal degree faithful action of $S$ can be obtained by finding, for each $\RM$-irreducible $\J$-class $J$, a $G_J$-set $X_J$ which is faithful on $M_J$ and which minimizes the cardinality of $(X_J\otimes_{G_J} R_J)/{\sim}$.  Note that if $X_J$ has any isomorphic orbits then since the tensor product and Green's congruence commute with coproducts, we may remove repeated orbits.  Hence our search space is limited to a finite number of $G_J$-sets as we need at most one coset space for each conjugacy class of subgroups.

So define $d_J$ for an $\RM$-irreducible $\J$-class $J$ to be the minimum cardinality of a quotient $(X_J\otimes_{G_J}R_J)/{\sim}$ where $X_J$ runs over all permutation representations of $G_J$ that are faithful when restricted to $M_J$ and have no two isomorphic orbits (i.e., stabilizers of different orbits are non-conjugate).  Then the above argument proves the following.

\begin{Thm}\label{t:min.degree}
Let $S$ be a Rhodes semisimple semigroup.  Then the minimal degree of a faithful representation of $S$ by partial transformations is $\sum d_J$ where the sum runs over the $\RM$-irreducible regular $\J$-classes.
\end{Thm}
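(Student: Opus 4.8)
The plan is to establish the two inequalities $m(S)\ge\sum_J d_J$ and $m(S)\le\sum_J d_J$ separately, the sum running over the $\RM$-irreducible regular $\J$-classes, and in each direction to reduce the global problem to an independent analysis on each such class. Both directions begin from Corollary~\ref{c:achieve.semisimple}: since $S$ is Rhodes semisimple, ${\equiv_{\GGM}}$ and a fortiori ${\equiv_{\RM}}\subseteq{\equiv_{\GGM}}$ are trivial, so a faithful partial action of minimal degree may be taken to be semisimple, say on $\Omega$. Decomposing $\Omega=\coprod_{J}\Omega_J$ into the parts collecting the strong orbits of each apex $J$, the degree equals $\sum_J|\Omega_J|$ as $J$ ranges over all regular $\J$-classes.

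For the lower bound I would invoke Theorem~\ref{t:faithful.crit}, which shows that faithfulness of $\Omega$ constrains only the $\RM$-irreducible classes, forcing $\Omega_J\neq\emptyset$ and $\Omega_J e_J$ to be a faithful $M_J$-set for each such $J$. Fixing such a $J$ and setting $X_J:=\Omega_J e_J$, Proposition~\ref{p:quotient.of.tensor} supplies a surjection $X_J\otimes_{G_J}R_J\to\Omega_J$ whose Green's quotient $(X_J\otimes_{G_J}R_J)/{\sim}$ is itself a quotient of $\Omega_J$, so that $|\Omega_J|\ge|(X_J\otimes_{G_J}R_J)/{\sim}|$. Since deleting a repeated orbit of a $G_J$-set neither destroys faithfulness on $M_J$ nor enlarges the tensor quotient---both $\otimes_{G_J}R_J$ and the Green's congruence commute with coproducts---the minimum defining $d_J$ is unaffected by the ``no two isomorphic orbits'' restriction, whence $|(X_J\otimes_{G_J}R_J)/{\sim}|\ge d_J$. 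Summing over the $\RM$-irreducible classes and discarding the non-negative contributions of the reducible ones gives $|\Omega|\ge\sum_J d_J$.

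For the upper bound I would exhibit a faithful semisimple action of degree exactly $\sum_J d_J$. For each $\RM$-irreducible $J$ the class of $G_J$-sets competing for $d_J$ is finite (at most one transitive constituent per conjugacy class of subgroups of $G_J$) and nonempty (the regular representation is faithful on $M_J$), so a minimizer $X_J$ exists. Set $\Omega_J:=(X_J\otimes_{G_J}R_J)/{\sim}$ and $\Omega:=\coprod_J\Omega_J$ over the $\RM$-irreducible classes. By Proposition~\ref{p:quotient.of.tensor} one has $\Omega_Je_J\cong X_J$ as $G_J$-sets, so each $\Omega_J$ is nonempty with $\Omega_Je_J$ faithful on $M_J$; Theorem~\ref{t:faithful.crit} then certifies $\Omega$ faithful, and its degree is $\sum_J|\Omega_J|=\sum_J d_J$.

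The genuinely delicate point, where I expect to spend the most care, is verifying that the constructed $\Omega_J$ really is a semisimple action all of whose strong orbits have apex $J$, for it is this that allows Theorem~\ref{t:faithful.crit} to be applied to the assembled $\Omega$. This reduces to showing that every transitive quotient of $R_J$ has apex $J$: by Lemma~\ref{l:apex} its apex is the minimal non-annihilating $\J$-class, and since every $J'<J$ annihilates $R_J$ (an element of $J'$ carries $R_J\subseteq J$ into a strictly lower $\J$-class, hence out of $R_J$) while $e_J\in J$ does not annihilate, the apex must be $J$. Everything else---the decomposition by apex, the commutation of $\otimes_{G_J}R_J$ and the Green's congruence with coproducts, and the cardinality bookkeeping---is routine once these structural facts are in place.
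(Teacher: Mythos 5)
Your proposal is correct and follows essentially the same route as the paper: Corollary~\ref{c:achieve.semisimple} to reduce to semisimple actions, Theorem~\ref{t:faithful.crit} as the faithfulness criterion, and Proposition~\ref{p:quotient.of.tensor} together with removal of repeated orbits to pass to the Green's quotients $(X_J\otimes_{G_J}R_J)/{\sim}$ — the paper merely packages your two inequalities as a single replace-and-invoke-minimality argument. Your additional verification that every strong orbit of $(X_J\otimes_{G_J}R_J)/{\sim}$ is transitive with apex $J$ (needed to legitimately apply Theorem~\ref{t:faithful.crit} to the assembled action) is correct and addresses a point the paper leaves implicit.
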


Of course, Theorem~\ref{t:min.degree} restricts to groups as the obvious fact  that minimal degree of a faithful representation by partial mappings is the same as the minimal degree by permutations.

There is one special case where we can say more about $d_J$.

\begin{Prop}\label{p:llm.case}
Let $J$ be a regular $\J$-class and suppose that for any two distinct $\mathscr L$-classes $L,L'$ of $J$, there is an $R$-class $R$ of $J$ such that exactly one of $R\cap L$ and $R\cap L'$ contains an idempotent.  Then, for any $G_J$-set $X_J$, the Green's congruence on $X_J\otimes_{G_J} R_J$ is the equality relation.
\end{Prop}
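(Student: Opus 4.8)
Let me understand Proposition~\ref{p:llm.case}. We have a regular $\J$-class $J$ and a hypothesis about its $\mathscr L$-classes and idempotents. We want to show the Green's congruence $\sim$ on $X_J \otimes_{G_J} R_J$ is the equality relation, for any $G_J$-set $X_J$. By Proposition~\ref{p:greens.cong}, the Green's congruence is defined by $\xi \sim \eta$ iff $\xi s e_J = \eta s e_J$ for all $s \in S$. So the goal is to show this forces $\xi = \eta$.

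**The structure of the underlying set.**
The plan is to use the explicit description of the tensor product. Every element of $X_J \otimes_{G_J} R_J$ is written uniquely as $x \otimes t$ with $x \in X_J$ and $t$ ranging over a transversal $T$ of the $G_J$-orbits on $R_J$; these $t$ correspond to the $\mathscr L$-classes of $J$, with $e_J$ corresponding to the $\mathscr L$-class $L_{e_J}$. So I would take two elements $x \otimes t$ and $x' \otimes t'$ and aim to separate them by finding $s \in S$ with $(x \otimes t)se_J \neq (x' \otimes t')se_J$, where these land in $(X_J \otimes_{G_J} R_J)e_J \cong X_J$.

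**Exploiting the hypothesis via right multiplication.**
The key computation: $(x \otimes t)s e_J = x \otimes t s e_J$, and $tse_J$ is defined and lies in $R_J \cap L_{e_J}$ (the $\mathscr H$-class of $e_J$) exactly when $ts \in R_J$ and $tse_J \in R_J$, i.e., when right multiplication carries $t$ back into the group $G_J = H_{e_J}$. First I would treat the case where the $\mathscr L$-classes differ, i.e., $t \neq t'$ as transversal elements. The hypothesis provides an $\mathscr R$-class $R$ of $J$ meeting exactly one of the two $\mathscr L$-classes $L_t, L_{t'}$ in an idempotent; that idempotent $f$ can be used as (or to build) a right multiplier $s$ that sends $t$ into $H_{e_J}$ while annihilating $t'$ (or vice versa), because $tf$ stays in $J$ precisely when $R_t \cap L_f$ is a group $\mathscr H$-class. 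Thus one of $x \otimes tse_J$, $x' \otimes t'se_J$ will be defined and the other undefined (equal to $\emptyset$), separating the two elements. For the remaining case $t = t'$ but $x \neq x'$: I would pick $s$ with $ts \in H_{e_J}$ (possible since $t \in R_J$, so some right multiplier returns $t$ to $e_J$), giving $x \otimes e_J \neq x' \otimes e_J$ in $(X_J \otimes R_J)e_J \cong X_J$, using the injectivity from Proposition~\ref{p:quotient.of.tensor}.

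**Main obstacle.**
The hard part will be the bookkeeping in translating the idempotent-existence hypothesis into a concrete right multiplier that behaves correctly on the transversal elements simultaneously. Specifically, I expect the delicate point is arranging that the chosen $s$ sends one transversal element into $H_{e_J}$ (so the corresponding $x \otimes \bullet$ survives and lands in the identity-$\mathscr H$-class copy of $X_J$) while sending the other out of $R_J$ entirely (so it is annihilated). This requires understanding exactly when $ts$ stays $\mathscr R$-related to $e_J$, which is governed by Green's relations and the location-of-idempotents condition — the statement that $R \cap L$ contains an idempotent is precisely the condition that multiplication $R_J \times L \to R_J$ is "non-null" in the right way. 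I would rely on standard facts about Green's relations in finite (hence stable) semigroups, and on the free action of $G_J$ on $R_J$, to make these multipliers precise and to verify that the separation always lands inside the copy $(X_J \otimes_{G_J} R_J)e_J$ where comparison is meaningful.
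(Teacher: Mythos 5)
Your proposal follows essentially the same route as the paper's proof: unique representatives $x\otimes t$ over a transversal of the $\mathscr H$-classes of $R_J$, a case split on whether the transversal elements differ, use of the hypothesized idempotent $f\in R\cap L$ as a right multiplier that preserves $t$ (indeed $tf=t$, since $f$ is an idempotent in $L_t$) while annihilating $t'$ by the Clifford--Miller theorem, and then a multiplier $y$ with $ty=e_J$ to land the surviving element in $(X_J\otimes_{G_J}R_J)e_J\cong X_J$. One small correction: the Clifford--Miller condition for $t'f\in J$ is that $L_{t'}\cap R_f=L'\cap R$ contains an idempotent (you wrote $R_{t'}\cap L_f$, which is transposed); with that fixed, the ``main obstacle'' you flag resolves exactly as in the paper, via the composite multiplier $fye_J$.
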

\begin{proof}
Choose a transversal $T$ to the $\HH$-classes of $R_J$ containing $e_J$. Then each element of $X_J\otimes_{G_J} R_J$ has a unique representative of the form $x\otimes t$ with $X\in X_J$ and $t\in T$.  Assume that $x\otimes t\neq x'\otimes t'$.  

Suppose first that $t\neq t'$ and let $L, L'$ be their respective $\mathscr L$-classes.  Then $L\neq L'$, and so, by assumption, there is an $\mathscr R$-class $R$ such that $R\cap L$ contains an idempotent $e$ and $R\cap L'$ does not contain an idempotent (up to a change of notation).  Assume that $ty=e_J$ with $y\in S^1$.  Then $teye_J=tye_J=e_J$ and so $(x\otimes t)eye_J = x\otimes e_J$.  On the other hand, because $R\cap L'$ does not contain an idempotent, $t'e\notin J$ by the Clifford-Miller theorem (cf.~\cite[Theorem~2.17]{CP}).  Thus $(x'\otimes t')eye_J=\emptyset$ and so $x\otimes t\nsim x'\otimes t'$.  

If $t=t'$ and $ty=e_J$, then $(x\otimes t)ye_J=x\otimes e_J$ and $(x'\otimes t)ye_J = x'\otimes e_J$, and so if $x\neq x'$, then $x\otimes t\nsim x'\otimes t'$. This completes the proof.
\end{proof}

\begin{Thm}\label{t:llm.case.nice}
Let $S$ be a Rhodes semisimple semigroup and assume that each $\RM$-irreducible $\J$-class satisfies the condition that, for any two distinct $\mathscr L$-classes $L,L'$ of $J$, there is an $R$-class $R$ of $J$ such that exactly one of $R\cap L$ and $R\cap L'$ contains an idempotent.  Then the minimal degree of a faithful action of $S$ by partial transformations is $\sum \ell_J\cdot n_J$ where the sum runs over the $\RM$-irreducible $\J$-classes $J$ of $S$, and  where $\ell_J$ is the number of $\mathscr L$-classes in $J$ and $n_J$ is the smallest degree of a permutation representation of $G_J$ whose restriction to $M_J$ is faithful.
\end{Thm}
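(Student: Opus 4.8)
The plan is to derive this directly from Theorem~\ref{t:min.degree} together with Proposition~\ref{p:llm.case}, the whole point being that under the stated hypothesis the Green's congruence collapses and the quotient in the definition of $d_J$ becomes the full tensor product. First I would invoke Theorem~\ref{t:min.degree}, which says that the minimal degree of a faithful representation of $S$ by partial transformations equals $\sum d_J$, the sum running over the $\RM$-irreducible regular $\J$-classes. Since the desired formula is also a sum over these classes, it suffices to prove the termwise identity $d_J=\ell_J\cdot n_J$ for each such $J$, and then sum.

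So I would fix an $\RM$-irreducible $\J$-class $J$ and use the hypothesis to simplify $d_J$. By assumption $J$ satisfies the condition of Proposition~\ref{p:llm.case}, so for \emph{every} $G_J$-set $X_J$ the Green's congruence $\sim$ on $X_J\otimes_{G_J} R_J$ is the equality relation; hence $(X_J\otimes_{G_J} R_J)/{\sim}=X_J\otimes_{G_J} R_J$. Appealing to the cardinality computation for tensor products recalled just before Proposition~\ref{p:quotient.of.tensor}, namely $|X_J\otimes_{G_J} R_J|=|X_J|\cdot|T|$ with $|T|$ equal to the number $\ell_J$ of $\mathscr L$-classes of $J$, the quantity to be minimized in the definition of $d_J$ is exactly $\ell_J\cdot|X_J|$. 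As $\ell_J$ is independent of $X_J$, this gives $d_J=\ell_J\cdot\min|X_J|$, where $X_J$ ranges over the permutation representations of $G_J$ admitted in the definition of $d_J$, that is, those faithful on $M_J$ and having no two isomorphic orbits.

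The last step is to identify $\min|X_J|$ with $n_J$, and this is where the only real care is needed, because of the extra ``no two isomorphic orbits'' restriction built into $d_J$. On one hand, every admissible $X_J$ is in particular a permutation representation of $G_J$ faithful on $M_J$, so $|X_J|\geq n_J$ by the definition of $n_J$. On the other hand, a permutation representation of $G_J$ of minimal degree $n_J$ whose restriction to $M_J$ is faithful can have no two isomorphic orbits: deleting a repeated orbit preserves faithfulness on $M_J$ while strictly decreasing the degree, contradicting minimality. Hence such a minimal representation is itself admissible, giving $\min|X_J|=n_J$ and therefore $d_J=\ell_J\cdot n_J$. Summing over the $\RM$-irreducible $\J$-classes yields the claimed value $\sum \ell_J\cdot n_J$. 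The argument is essentially a bookkeeping assembly of the preceding results; the one substantive input is Proposition~\ref{p:llm.case}, and the only subtlety to guard against is the compatibility of the orbit-multiplicity restriction with the minimization, handled as above.
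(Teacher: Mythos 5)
Your proof is correct and takes essentially the same route as the paper, which states Theorem~\ref{t:llm.case.nice} without further argument as the immediate combination of Theorem~\ref{t:min.degree}, Proposition~\ref{p:llm.case}, and the cardinality formula $|X_J\otimes_{G_J}R_J|=|X_J|\cdot \ell_J$. Your explicit verification that a minimal-degree representation faithful on $M_J$ automatically has no two isomorphic orbits (so the restriction in the definition of $d_J$ costs nothing) is exactly the detail the paper leaves implicit.
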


We note that, for a regular $\J$-class $J$, the property that, for any two distinct $\mathscr L$-classes $L,L'$ of $J$, there is an $R$-class $R$ of $J$ such that exactly one of $R\cap L$ and $R\cap L'$ contains an idempotent implies that ${\equiv_J}={\equiv_{\RM,J}}$.  Indeed, suppose that the former property holds and that $s\equiv_J t$.  Let $x\in J$ and suppose that $xs\in J$.  Then choose an idempotent $e$ with $xs\eL e$.  Then $xse\in J$ and so $xte\in J$.  Thus $xt\in J$.  We claim that $xs\eL xt$.  Indeed, let $R$ be an $\R$-class and let $y\in R$.  Then $xsy\in J$ if and only if $xty\in J$ because $s\equiv_J t$.  Thus, by the Clifford-Miller Theorem, either both $R\cap L_{xs}$ and $R\cap L_{xt}$ contain an idempotent or neither do.  It follows from our hypothesis that $L_{xs}=L_{xt}$.  Choose an idempotent $e\in L_{xs}=L_{xt}$.  Then $xs=xse=xte=xt$ because $s\equiv_J t$.  We conclude that $s\equiv_{\RM,J} t$, and so ${\equiv_J}={\equiv_{\RM,J}}$.

It is convenient to express the condition that for any two distinct $\mathscr L$-classes $L,L'$ of $J$, there is an $R$-class $R$ of $J$ such that exactly one of $R\cap L$ and $R\cap L'$ contains an idempotent in terms of a Rees coordinitization of the principal factor $J^{0}$ corresponding to $J$. If $J^{0} \approx \mathscr M^{0}(G,A,B,C)$ then the condition is equivalent to having, for any two distinct rows $C_{b\ast}$, $C_{b'\ast}$, with $b,b'\in B$, of the sandwich matrix, that there is an $a \in A$ such that exactly one of $C_{ba},C_{b'a}$ is non-zero. Since the sandwich matrices of principal factors of inverse semigroups are identity matrices (or equivalently because each $\mathscr L$- and $\mathscr R$-class of an inverse semigroup has a unique idempotent), the hypotheses of Theorem~\ref{t:llm.case.nice} apply, in particular, to inverse semigroups.  For inverse semigroups, the action on $X_J\otimes_{G_J}R_J$ will be by partial bijections and so the minimal degrees of a faithful representation by partial bijections and by partial transformations coincide.

Easdown began the first serious consideration of the minimal faithful degree of inverse semigroups, beginning with semilattices of groups~\cite{Easdownfaithful}.  The general case was first handled by Schein~\cite{scheininvmindeg}.  Theorem~\ref{t:llm.case.nice} gives the following computation of the minimal faithful degree.

\begin{Cor}\label{c:inverse}
Let $S$ be an inverse semigroup.  Then the minimal degree of a faithful representation of $S$ by partial transformations coincides with the minimal degree of a faithful representation by partial bijections.  This degree is $\sum_J \ell_J\cdot n_J$ where $J$ runs over the $\RM$-irreducible $\J$-classes, $\ell_J$ is the number of $\mathscr L$-classes in $J$ and $n_J$ is the minimal degree of a permutation representation of $G_J$ that restricts faithfully to $M_J$, where $M_J$ is the normal subgroup of the maximal subgroup $G_J$ consisting of those elements that act trivially under all Sch\"utzenberger representations associated to $\J$-classes $J'<J$.
\end{Cor}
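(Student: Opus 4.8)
The plan is to realize this corollary as the special case of Theorem~\ref{t:llm.case.nice} applying to inverse semigroups, supplemented by a separate argument that the minimal faithful partial transformation representation is in fact by partial bijections. To invoke Theorem~\ref{t:llm.case.nice} I must verify two things: that $S$ is Rhodes semisimple, and that each regular $\J$-class satisfies the $\mathscr L$-class separation hypothesis. Both will follow from the defining feature of inverse semigroups that each $\mathscr L$-class and each $\mathscr R$-class contains a unique idempotent.

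First I would check the separation hypothesis. Given two distinct $\mathscr L$-classes $L,L'$ of a regular $\J$-class $J$, let $e$ be the unique idempotent of $L$ and take $R=R_e$. Then $R\cap L$ is the $\HH$-class of $e$ and so contains the idempotent $e$, while if $R\cap L'$ contained an idempotent $f$ we would have $f\R e$ with $f$ idempotent, forcing $f=e$ by uniqueness of the idempotent in $R_e$, contradicting $e\in L\neq L'\ni f$. Thus exactly one of $R\cap L$ and $R\cap L'$ contains an idempotent, and the hypothesis holds for every regular $\J$-class. By the remark following Theorem~\ref{t:llm.case.nice}, this yields ${\equiv_J}={\equiv_{\RM,J}}$ for all regular $\J$-classes, whence ${\equiv_{\GGM}}=\bigcap_J{\equiv_J}=\bigcap_J{\equiv_{\RM,J}}={\equiv_{\RM}}$. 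Since the Preston--Wagner representation is a faithful action of $S$ by partial bijections, and any such action is semisimple, $S$ has a faithful semisimple action; by Lemma~\ref{l:apex} this forces $\equiv_{\RM}$, and hence $\equiv_{\GGM}$, to be the equality relation, so $S$ is Rhodes semisimple. Theorem~\ref{t:llm.case.nice} then gives that the minimal partial transformation degree equals $\sum_J \ell_J\cdot n_J$ over the $\RM$-irreducible $\J$-classes. To match the stated description of $M_J$, note that $g\equiv_{\RM,J'}e_J$ means $g$ and $e_J$ act identically in the right Sch\"utzenberger representation on $R_{J'}$; in an inverse semigroup every idempotent acts there as a restriction of the identity (because $re_J\leq r$ and $re_J\R r$ force $re_J=r$), so $e_J$ acts trivially where defined, and $g\in M_J$ is exactly the condition that $g$ act trivially under all Sch\"utzenberger representations associated to $J'<J$.

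The main obstacle is the coincidence of the two minimal degrees. One inequality is immediate, since $SIM_n\subseteq PT_n$ shows the partial transformation degree is at most the partial bijection degree. For the reverse, by Corollary~\ref{c:achieve.semisimple} and the discussion preceding Theorem~\ref{t:min.degree}, a minimal faithful partial transformation action may be taken of the form $\coprod_J(X_J\otimes_{G_J}R_J)/{\sim}$, and by Proposition~\ref{p:llm.case} the separation hypothesis makes each Green's congruence trivial, so this action is simply $\coprod_J X_J\otimes_{G_J}R_J$. It therefore suffices to show that each $X_J\otimes_{G_J}R_J$ is an action by partial bijections.

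For this last point I would use that the right Sch\"utzenberger representation of an inverse semigroup on $R_J$ is by partial bijections (equivalently, that the sandwich matrix of $J^0$ is monomial), so that each $\rho_s\colon r\mapsto rs$ is an injective partial map which is moreover equivariant for the free left $G_J$-action. A short orbit-space computation then shows $\mathrm{id}_{X_J}\times\rho_s$ descends to an injective partial map on $X_J\otimes_{G_J}R_J$: if $x_1\otimes r_1s=x_2\otimes r_2s$ with both sides defined, then choosing $h\in G_J$ with $(x_1,r_1s)=(x_2h,h\inv r_2 s)$ and applying injectivity of $\rho_s$ gives $r_1=h\inv r_2$, whence $x_1\otimes r_1=x_2\otimes r_2$. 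Thus the minimal construction is indeed by partial bijections, establishing the reverse inequality and completing the proof. I expect the equivariance-plus-injectivity descent to the tensor product to be the only step requiring genuine care; the rest is a direct application of the earlier results combined with the unique-idempotent property.
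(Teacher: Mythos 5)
Your proposal is correct and follows essentially the same route as the paper: verify the hypotheses of Theorem~\ref{t:llm.case.nice} via the unique-idempotent property of $\mathscr L$- and $\mathscr R$-classes of an inverse semigroup, and observe that the action on each $X_J\otimes_{G_J}R_J$ is by partial bijections, so the minimal degrees by partial transformations and partial bijections coincide. Your explicit verifications of Rhodes semisimplicity, of the description of $M_J$ in terms of Sch\"utzenberger representations, and of the injectivity descent to the tensor product are details the paper leaves implicit, but the underlying argument is the same.
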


Schein~\cite{scheininvmindeg} defines a $\mathscr J$-class to be irreducible if all its elements are join irreducible in the natural partial order on $S$.  He shows that this is equivalent to even a single element being join irreducible. Note that $E(S)$ is a meet semilattice and contains a minimum element.  It is straightforward to see that if $e\in E(S)$ is not the minimum element, then $e$ is join irreducible in $S$ if and only if it is join irreducible in $E(S)$, but the minimum element is join irreducible in $S$ if and only if it is not a zero for $S$.  The next proposition and a quick read of Schein's notation shows that Corollary~\ref{c:inverse} is equivalent to Schein's theorem.

\begin{Prop}\label{p:Schein.irred}
Let $S$ be an inverse semigroup.  Then a $\J$-class $J$ of $S$ is $\RM$-irreducible if and only if all its elements are join irreducible in the natural partial order on $S$.  Moreover, if $e_J\in J$ is an idempotent, then $M_J = \{g\in G_J\mid f<e_J\implies f<g\}$.
\end{Prop}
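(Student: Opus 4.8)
The plan is to reduce everything to the idempotent $e_J$ and to the action of $S$ on principal order ideals for the natural partial order $\leq$. I will use freely that for idempotents $f,g$ one has $f\leq g\iff fg=f=gf$, that for an idempotent $f$ and arbitrary $s$ one has $f\leq s\iff f=fs\iff f=sf$, and that multiplication is order preserving. First I would record the inverse semigroup form of the congruences $\equiv_{\RM,J'}$. A routine computation using stability shows that for $x$ in a regular $\mathscr J$-class $J'$ one has $xs\in J'\iff x^{-1}x\leq ss^{-1}$, and that all $x$ with $x^{-1}x=f$ satisfy $xs=xt$ as soon as $fs=ft$. Hence
\begin{equation*}
s\equiv_{\RM,J'}t\iff \text{for every idempotent } f\in J':\ \bigl(f\leq ss^{-1}\text{ or }f\leq tt^{-1}\bigr)\Rightarrow fs=ft,
\end{equation*}
where one also checks that $f\leq ss^{-1}$ together with $fs=ft$ forces $f\leq tt^{-1}$. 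Since comparable $\mathscr J$-equivalent idempotents of a finite semigroup are equal, every idempotent $f<e_J$ lies in a $\mathscr J$-class strictly below $J$, and conversely the idempotents $\leq e_J$ occurring in classes below $J$ are exactly the idempotents $<e_J$. Finally, $e\mapsto se$ is an order isomorphism $\{e\leq s^{-1}s\}\to\{x\leq s\}$, so $s$ is join irreducible iff $s^{-1}s$ is; as $s^{-1}s$ runs over all idempotents of $J$, which have pairwise isomorphic ideals (being $\mathscr D$-equivalent), all elements of $J$ share the join irreducibility of $e_J$, disposing of Schein's remark that a single join irreducible element suffices.

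Next I would prove the formula for $M_J$. Applying the description of $\equiv_{\RM,J'}$ with $s=g\in G_J$ and $t=e_J$, where $gg^{-1}=e_J=g^{-1}g$, the biconditional is automatic and the condition reduces to $fg=f$ for every idempotent $f\in J'$ with $f\leq e_J$. Intersecting over all $J'<J$ and using the first paragraph, $g\in M_J$ iff $fg=f$, i.e.\ $f\leq g$, for every idempotent $f<e_J$. A short argument (if $h<e_J$ then $hh^{-1}<e_J$, and $hh^{-1}\leq g$ gives $h=hh^{-1}h\leq g$) upgrades this to all elements $<e_J$, so $M_J=\{g\in G_J\mid f<e_J\Rightarrow f<g\}$, the strictness being automatic since $f$ and $g$ lie in different $\mathscr J$-classes.

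The heart is the equivalence that $J$ is $\RM$-irreducible iff $e_J$ is join irreducible. The engine is one lemma: if $f_0\leq ss^{-1}$ then $x\mapsto xs$ is an order isomorphism from $\{x\leq f_0\}$ onto $\{x\leq f_0 s\}$, with inverse $y\mapsto ys^{-1}$; in particular it preserves any join existing in the ideal. For the reducible direction I assume $\bigvee\{x<e_J\}=e_J$ and show $\bigcap_{J'<J}\equiv_{\RM,J'}\subseteq\equiv_{\RM,J}$. Transporting join reducibility along the ideal isomorphisms, every idempotent $f_0\in J$ equals the join of the elements strictly below it; so if $s,t$ agree below $J$ and $f_0\leq ss^{-1}$, then $gs=gt$ for all $g<f_0$, each such $g$ satisfies $g\leq tt^{-1}$ by the forcing remark, whence $f_0=\bigvee g\leq tt^{-1}$, and applying the lemma to $s$ and to $t$ gives $f_0 s=\bigvee gs=\bigvee gt=f_0 t$. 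With the symmetric argument this is exactly $s\equiv_{\RM,J}t$.

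For the converse I assume $e_J$ is join irreducible, i.e.\ $\bigvee\{x<e_J\}\neq e_J$, and must exhibit $s,t$ with $s\not\equiv_{\RM,J}t$ but $s\equiv_{\RM,J'}t$ for all $J'<J$. This is where the work lies and what I expect to be the main obstacle, since the witnesses need not lie in $R_J$ and one must split on the behaviour of the join. If $\bigvee\{x<e_J\}=e'<e_J$ exists, I take $s=e'$, $t=e_J$: the pair is distinguished at $J$ by $f=e_J$ (which is $\leq tt^{-1}$ but not $\leq ss^{-1}$), while below $J$ every idempotent $f<e_J$ satisfies $f\leq e'$, so $fe'=f=fe_J$. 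If the join does not exist, then one checks $\bigvee\{\text{idempotents }<e_J\}=e_J$ and that there is an upper bound $u$ of $\{x<e_J\}$ with $e_J\not\leq u$; then $t:=e_Ju$ lies in $R_J$, dominates every $x<e_J$, and differs from $s:=e_J$, again yielding agreement below $J$ but not at $J$. Verifying in each case that the proposed $s,t$ really agree below $J$ (using that any idempotent $f<e_J$ is $\leq e'$, respectively $\leq t$) and disagree at $J$ is the delicate but routine part; assembling the two directions then identifies $\RM$-irreducibility of $J$ with join irreducibility of $e_J$.
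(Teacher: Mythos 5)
Your reduction to idempotents, your description of $\equiv_{\RM,J'}$, the formula for $M_J$, and the witness construction in the converse direction (the two cases according to whether $\bigvee\{x<e_J\}$ exists in $S$) are all essentially sound and run parallel to the paper's proof. The genuine gap is in the direction ``join reducible $\Rightarrow$ $\RM$-reducible'', at the step
\[
f_0 s=\bigvee_{g<f_0} gs=\bigvee_{g<f_0} gt=f_0 t.
\]
Your lemma only asserts that $x\mapsto xs$ is an order isomorphism $\{x\le f_0\}\to\{x\le f_0 s\}$, so it identifies $f_0s$ as the join of $\{gs\mid g<f_0\}$ \emph{computed inside the ideal} $\{x\le f_0s\}$, and likewise $f_0t$ as the join of the same set computed inside $\{x\le f_0t\}$. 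These are joins in two different sub-posets, and nothing you have established allows you to equate them. This is not a presentational quibble: ideal-local joins of one and the same set really do differ between ideals in inverse semigroups. Take $S=\{0,e_1,e_2,1,g\}\subseteq SIM_4$, where $e_i=\mathrm{id}_{\{i\}}$, $1$ is the identity, $g$ is the transposition $(3\,4)$, and $0$ is the empty map: then $e_1,e_2\le 1$ and $e_1,e_2\le g$, while $1$ and $g$ are incomparable, so $\{e_1,e_2\}$ has local join $1$ inside $\{x\le 1\}$ but local join $g$ inside $\{x\le g\}$. The same example shows that no argument using only ideal-local joins can prove this implication: there $e_J=1$ \emph{is} the ideal-local join of $\{x<1\}$, yet the group of units $J=\{1,g\}$ is $\RM$-irreducible (the pair $s=1$, $t=g$ agrees on every $\J$-class below $J$ and is separated at $J$). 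Since after your first paragraph you manipulate nothing but ideal-local joins, your argument would ``prove'' a statement that this example refutes, so it must break down --- and it breaks down exactly at the displayed equation.

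What is missing is precisely the fact the paper invokes at the corresponding step: in an inverse semigroup, multiplication distributes over joins that exist in $S$ (Lawson, Chapter~1, Proposition~19). That fact (i) upgrades your lemma to: if $f_0\le ss^{-1}$ and $f_0=\bigvee D$ \emph{in $S$}, then $f_0s=\bigvee Ds$ in $S$, after which $f_0s=f_0t$ follows from uniqueness of joins in a single poset; and (ii) it is also what legitimizes your transport step, since an order isomorphism between principal ideals carries only the ideal-local join property from $e_J$ to the other idempotents of $J$, not the hypothesis ``$e_J=\bigvee\{x<e_J\}$ in $S$''. A smaller, separate blemish: in the $M_J$ paragraph, the ``upgrade'' $hh^{-1}\le g\Rightarrow h\le g$ is false for non-idempotent $h$ (in $SIM_2$ take $h=(1\,2)$ and $g=\mathrm{id}$); it is harmless here only because every element below an idempotent in an inverse semigroup is itself an idempotent, which makes the upgrade unnecessary.
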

\begin{proof}
First note that for any $\J$-class $J'$, we have that $s\equiv_{\RM,J'} t$ if and only if for all idempotents $e\in J'$, we have $es\in J'\iff et\in J'$ and if this occurs then $es=et$ (since each element of $J'$ is $\mathscr L$-equivalent to an idempotent).  Also, we claim that if $f\in S$ is any idempotent and $s\in S$, then $fs\in J_f$ if and only if $f\leq ss^{-1}$.  Indeed, if $f\leq ss^{-1}$, then $fs(fs)^{-1} = fss^{-1}=f$ and so $fs\in J_f$.  Conversely, if $fs\in J_f$, then $fs\mathrel{\mathscr R} f$ by stability, and so $f=fs(fs)^{-1} = fss^{-1}$ establishing $f\leq ss^{-1}$.

 Suppose first that $J$ is an $\RM$-reducible $\J$-class and let $e\in J$ be an idempotent.  If $J$ is the minimal ideal of $S$, then the fact that $J$ is $\RM$-reducible means that $\equiv_{\RM,J}$ is the universal relation.  In particular, $e\equiv_{\RM,J} s$ for all $s\in S$, and so $e=ee=es$ as $e\in E$.  Thus $e$ is a zero element (as $se=(es^{-1})\inv = e$). Therefore, $J=\{e\}$ and $e$ is a join reducible element of $S$ in the natural partial order (being the join of the empty set).  So suppose that $J$ is not the minimal ideal, and hence $e$ is not the minimum idempotent of $S$.   We show that $e$ is the join of all idempotents strictly below it, i.e., is join reducible.  Let $f$ be an idempotent such that, for all $f'<e$, we have $f'\leq f$.  We want to show that $e\leq f$ or, equivalently, that $ef=e$.  Let $X=\{J'\mid J'<J\}$.  We claim that $e\equiv_{\RM, J'} ef$ for all $J'\in X$.    Else let $J'\in X$.  Let $x\in J'$ be an idempotent.  Then if $xef\in J$, trivially $x\geq xe\geq xef\in J$, and so $xe\in J$.  Conversely, if $xe\in J$, then by stability $xe\mathrel{\mathscr{R}} x$, and so $x=xe$ since $\mathscr R$-classes have a unique idempotent.   Therefore $x\leq e$. But since $J'<J$, in fact $x<e$, and so $x\leq f$. Thus $xef=xfe=xe$, whence $ef\equiv_{\RM,J'} e$.  Since this is true for all $J'<J$, we have by reducibility that $ef\equiv_{\RM,J} e$.  Since $ee=e\in J$, we deduce that $e=eef=ef$, whence $e\leq f$, as required.

Conversely, suppose that each idempotent of the $\J$-class $J$ is join reducible.  If $J$ is the minimal ideal, then since the minimum idempotent is join reducible, it must be the join of the empty set and hence is the only minimal element of $S$.  Thus $J=\{e\}$ and $e$ is a zero element.  But then $\equiv_{\RM,J}$ is the universal relation and so $J$ is $\RM$-reducible.    Suppose now that $J$ is not the minimal ideal and that $s\equiv_{\RM,J'} t$ for all $J'<J$.  Then we want that $s\equiv_{\RM,J} t$.  Using the observation in the first line of the proof, let $e\in J$ and suppose that $es\notin J$.  Then, by the claim, $e\nleq ss^{-1}$ and so we can find $f<e$ with $f\nleq ss^{-1}$ as $e$ is the join of the idempotents strictly below it (being join reducible and not the minimum element of $E(S)$).  Note that $J_f<J$ as idempotents in a $\mathscr J$-class of a finite inverse semigroup are incomparable by stability.  By the claim $fs\notin J_f$ and hence $ft\notin J_f$ as $s\equiv_{\RM, J_f} t$.  But this means that $f\nleq tt\inv$ by the claim and hence $e\nleq tt\inv$ as $f<e$.  Thus $et\notin J$.  We conclude that $es\in J\iff et\in J$.  Suppose now that $es, et\in J$ and let $X=\{f\in E(S)\mid f<e\}\neq \emptyset$, as $e$ is not the minimum element of $E(S)$.
By the claim $e\leq ss\inv,tt\inv$ and hence $f\leq ss\inv,tt\inv$ for all $f\in X$.   Then $J_f<J$ for each $f\in X$ and $fs,ft\in J_f$ by the claim.  Since $s\equiv_{\RM, J_f} t$, we must have that $fs=ft$.  By~\cite[Chapter~1, Proposition 19]{Lawson} we have $es=\bigvee_{f\in X}fs=\bigvee_{f\in X}ft=et$.  This shows that $s\equiv_{\RM,J} t$, establishing that $J$ is $\RM$-reducible.  This completes the proof that Schein's notion of irreducibility coincides with $\RM$-irreducibility (it also coincides with the notion of irreducibility in~\cite{GZR}).

For the final statement, if $g\in M_J$ and $f<e_J$, then $J_f<J$ and $fe_J=f\in J$.  Thus, since $g\equiv_{\RM,J_f} e_J$, we must have that $fg=f$, that is $f\leq g$.  Of course since $g\in J>J_f$, we must have $f<g$.   Conversely, suppose that $f<e_J$ implies $f<g$ and let $J'<J$.  If $f\in J'$ is an idempotent, then by the claim $fe_J\in J'\iff f\leq e_J\iff fg\in J'$ as $gg\inv=e_J$.  Moreover, if $fe_J\in J'$, then $fe_J\mathrel{\mathscr R} f$ by stability and so $f=fe_J$. As $J'<J$, we conclude that $f<e_J$.  But then $f<g$ and so $fg=f$.  Thus $e_J\equiv_{\RM, J'} g$, and so $g\in M_J$ as required.  This completes the proof.
\end{proof}

A semigroup $S$ is called \emph{right  mapping}~\cite{KRannals,qtheor,Arbib} if it has a (distinguished) regular $\J$-class $J$ such that $\equiv_{\RM,J}$ is the equality relation.  In this case, that $J$ is either the minimal ideal, or $S$ has a zero and $J^0$ is the unique $0$-minimal ideal.  It is immediate from Proposition~\ref{p:irredundant} that the distinguished $\J$-class is the unique $\RM$-irreducible $\J$-class of $S$.  The semigroup $S$ is \emph{generalized group mapping} if it has a (distinguished) regular $\J$-class $J$ such that $\equiv_J$ is the equality relation.  Then $S$ and $S^{op}$ are right mapping with distinguished $\J$-class $J$, and the converse is true as well.  In other words, $S$ is generalized group mapping if it has a ($0$-)minimal ideal on which $S$ acts faithfully on both the left and right.  If the maximal subgroup of $J$ is nontrivial, then $S$ is called \emph{group mapping}, and otherwise it is called $\mathsf{AGGM}$, which is an acronym for aperiodic generalized group mapping, although only the ($0$-)minimal ideal need be aperiodic.

A simple corollary of Theorem~\ref{p:llm.case} is the following, which handles  $\mathsf{AGGM}$-semigroups.

\begin{Cor}\label{c:aggm}
Let $S$ be a semigroup containing a regular $\J$-class $J$ with trivial maximal subgroup such that $\equiv_J$ is the equality relation.  Then the minimal degree of a faithful action by partial transformations of $S$ is the number of $\mathscr L$-classes in $J$.
\end{Cor}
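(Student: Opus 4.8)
The plan is to show that the hypotheses place $S$ within the scope of Theorem~\ref{t:llm.case.nice} with $J$ as the unique contributing $\J$-class, and then to read off the two invariants attached to $J$. First I would note that, since ${\equiv_{\RM,J}}\subseteq {\equiv_J}$ and $\equiv_J$ is the equality relation, $\equiv_{\RM,J}$ is the equality relation as well; hence $S$ is right mapping with distinguished $\J$-class $J$, and by the discussion following Theorem~\ref{t:llm.case.nice} (which rests on Proposition~\ref{p:irredundant}) $J$ is the unique $\RM$-irreducible $\J$-class of $S$. Similarly ${\equiv_{\GGM}}\subseteq {\equiv_J}$ is trivial, so $S$ is Rhodes semisimple. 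Consequently, once the hypothesis of Theorem~\ref{t:llm.case.nice} is checked, its sum collapses to the single term $\ell_J\cdot n_J$.

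The crux---and where triviality of the maximal subgroup is used---is verifying that $J$ satisfies the $\mathscr L$-class/idempotent condition of Theorem~\ref{t:llm.case.nice}. I would coordinatize the principal factor as $J^0\approx \mathscr M^0(G_J,A,B,C)$ and use the reformulation recorded just before Corollary~\ref{c:aggm}: the condition fails precisely when two distinct rows $C_{b\ast}$, $C_{b'\ast}$ of the sandwich matrix have the same support. Because $G_J$ is trivial, $C$ is a $0$-$1$ matrix, so equal support means $C_{b\ast}=C_{b'\ast}$. I would then argue by contradiction: assuming rows $b\neq b'$ coincide, fix any $a\in A$ and compare the distinct elements $(a,b)$ and $(a,b')$ of $J$. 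A direct computation of $x(a,b)y$ and $x(a,b')y$ for arbitrary $x,y\in J$ shows, using that $G_J$ is trivial, that both products lie in $J$ (respectively fall strictly below $J$) under identical conditions and agree whenever defined, precisely because rows $b$ and $b'$ of $C$ are equal. This gives $(a,b)\equiv_J(a,b')$ with $(a,b)\neq(a,b')$, contradicting the triviality of $\equiv_J$. Hence no two rows coincide, and the hypothesis of Theorem~\ref{t:llm.case.nice} holds (the remaining $\J$-classes impose no constraint, as $J$ is the only $\RM$-irreducible one). I expect this row computation to be the main obstacle, mostly in keeping the Rees bookkeeping straight.

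It remains to evaluate the invariants of $J$. Since $G_J$ is trivial, so is the distinguished normal subgroup $M_J\leq G_J$, and therefore every nonempty permutation representation of $G_J$ restricts faithfully to $M_J$; the smallest one is the action on a single point, so $n_J=1$. (That $n_J\neq 0$ reflects condition (1) of Theorem~\ref{t:faithful.crit}: a faithful action forces $\Omega_J\neq\emptyset$.) Substituting into Theorem~\ref{t:llm.case.nice}, the minimal degree of a faithful action of $S$ by partial transformations equals $\ell_J\cdot n_J=\ell_J$, the number of $\mathscr L$-classes of $J$, as claimed.
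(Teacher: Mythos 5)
Your proof is correct and follows essentially the same route as the paper's: identify $J$ as the unique $\RM$-irreducible $\J$-class via Proposition~\ref{p:irredundant}, observe that $S$ is Rhodes semisimple, and apply Theorem~\ref{t:llm.case.nice} with $M_J=G_J$ trivial, so that $n_J=1$ and the sum collapses to $\ell_J$. The only difference is that you spell out the verification of the sandwich-matrix hypothesis of Theorem~\ref{t:llm.case.nice} (distinct rows must have distinct supports, since equal rows would produce distinct $\equiv_J$-equivalent elements of $J$), a step the paper treats as immediate; it also follows from Theorem~\ref{t:proportional}, because with trivial maximal subgroup left proportional rows are equal rows.
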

\begin{proof}
It is well known that either $S$ is trivial or $J$ is the unique minimum non-zero $\mathscr J$-class of $S$~\cite[Chapter~4]{qtheor}.  Hence $J$ is the unique $\RM$-irreducible $\J$-class (since $\equiv_{\RM,J}$ is also the equality relation).  Since the maximal subgroup of $J$ is trivial, Theorem~\ref{t:llm.case.nice} immediately yields the result.
\end{proof}

For example, if $R_n$ is the semigroup of binary relations on an $n$-element set, then it is an $AGGM$ semigroup whose 0-minimal $\J$-class
$J$-class consists of matrices of Schein rank at most 1 and  has $2^n-1$ $\mathscr L$-classes~\cite{kimbooleanmatrices}. It follows from Corollary~\ref{c:aggm} that  $2^n-1$ is the minimal degree of a faithful action of $S$ by partial transformations. It can be realized as the natural action on the non-empty subsets of $[n]$ and equivalently by the right Sch\"utzenberger representation on its 0-minimal ideal.

\begin{Rmk}
Note that if $S$ is a semigroup with a zero element, then under any minimal degree faithful representation of $S$ by partial mappings, the zero element must act as the empty map (else remove the image of zero, which is invariant).  Also, under any minimal degree faithful representation by total mappings, the zero element must be a constant map to a sink. If follows that any minimal degree faithful representation by total mappings is obtained from a minimal degree faithful representation by partial transformations by adding a sink.
\end{Rmk}

As an application we compute the minimal faithful degree of an action  by partial transformations of $M_n(F)$ with $F$ a finite field.

\begin{Cor}\label{c:finite.field}
Let $F$ be a finite field of $q$ elements. If $|F|=2$, then the minimal degree of a faithful action of $M_n(F)$  by partial mappings is $2^n-1$.  Otherwise, suppose that $q-1$ has prime factorization $p_1^{m_1}\cdots p_k^{m_k}$.   Then the minimal degree of a faithful action of $M_n(F)$  by partial mappings is $\frac{q^n-1}{q-1}\cdot \sum_{i=1}^k p_i^{m_i}$.
\end{Cor}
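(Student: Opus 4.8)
The plan is to apply Theorem~\ref{t:llm.case.nice}, so the real work is to understand the Green structure of $M_n(F)$, to locate the $\RM$-irreducible $\mathscr J$-classes, and to check the hypothesis on $\mathscr L$-classes. Recall that $M_n(F)$ is regular, that its $\mathscr J$-classes $J_0<J_1<\cdots<J_n$ are indexed by rank, that the $\mathscr L$-classes (respectively $\mathscr R$-classes) of $J_r$ are the $r$-dimensional row spaces (respectively column spaces), and that the maximal subgroup of $J_r$ is $GL_r(F)$. In particular the rank-one class $J_1$ has maximal subgroup $GL_1(F)=F^\ast\cong\mathbb Z/(q-1)$, and its number of $\mathscr L$-classes is the number of lines in $F^n$, namely $\frac{q^n-1}{q-1}$.

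First I would show that $\equiv_{J_1}$ is the equality relation; since $\equiv_{\GGM}\subseteq\equiv_{J_1}$, this proves $M_n(F)$ is Rhodes semisimple (indeed generalized group mapping with distinguished class $J_1$). Writing a rank-one matrix as an outer product $uv$ with $u$ a nonzero column and $v$ a nonzero row, one computes $(uv)\,s\,(u'v')=(vsu')\,uv'$, so this product lies in $J_1$ exactly when the scalar $vsu'$ is nonzero. Hence $s\equiv_{J_1}t$ forces $vsu'=vtu'$ for all $v,u'$ (they agree when both are nonzero, and are both zero otherwise), and as a matrix is determined by the bilinear form $(v,u')\mapsto vsu'$ we conclude $s=t$. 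Because $\equiv_{\RM,J_1}\subseteq\equiv_{J_1}$, the relation $\equiv_{\RM,J_1}$ is then also trivial.

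Next I would identify $J_1$ as the unique $\RM$-irreducible $\mathscr J$-class and compute $M_{J_1}$. The only class below $J_1$ is $\{0\}$, on which every element acts equally, so $\bigcap_{J'<J_1}\equiv_{\RM,J'}$ is the universal congruence, which is not contained in the trivial $\equiv_{\RM,J_1}$; thus $J_1$ is $\RM$-irreducible, while the same observation forces $M_{J_1}=G_{J_1}=F^\ast$ since its defining condition ranges only over $J'<J_1$. For $r\ge2$ we have $J_1<J_r$, so $\bigcap_{J'<J_r}\equiv_{\RM,J'}\subseteq\equiv_{\RM,J_1}$ is already trivial and hence contained in $\equiv_{\RM,J_r}$, making every such $J_r$ (and likewise $\{0\}$) $\RM$-reducible. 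Finally I would verify the $\mathscr L$-class hypothesis for $J_1$: in Rees coordinates the sandwich matrix of $J_1^0$ is, up to scalars, the pairing $C_{\langle v\rangle,\langle u\rangle}=vu$, and for distinct row-lines $\langle v\rangle\neq\langle v'\rangle$ the hyperplanes $v^{\perp},v'^{\perp}$ differ, so any $u\in v^{\perp}\setminus v'^{\perp}$ gives $vu=0\neq v'u$ (vacuous for $n=1$). Theorem~\ref{t:llm.case.nice} then yields minimal degree $\ell_{J_1}\cdot n_{J_1}=\frac{q^n-1}{q-1}\cdot n_{J_1}$, where $n_{J_1}$ is the least faithful permutation degree of $F^\ast$. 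For $q>2$ the classical formula for cyclic groups~\cite{minimalperm} gives $n_{J_1}=\sum_i p_i^{m_i}$, producing the stated answer; for $q=2$ the group $F^\ast$ is trivial, so $M_n(F)$ is $\mathsf{AGGM}$ and Corollary~\ref{c:aggm} gives degree $2^n-1$.

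I expect the main obstacle to be the first step, namely establishing that $\equiv_{J_1}$ (equivalently $\equiv_{\GGM}$) is trivial, since every subsequent reduction cascades from the triviality of $\equiv_{\RM,J_1}$ together with the fact that $J_1$ sits below every nonzero $\mathscr J$-class. The verification of the sandwich-matrix condition is the only genuinely new linear-algebra input, and it is elementary.
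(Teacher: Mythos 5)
Your proposal is correct and follows essentially the same route as the paper's proof: establish that $\equiv_{J_1}$ is trivial by sandwiching between rank-one matrices to recover the bilinear form $(v,u')\mapsto vsu'$, identify $J_1$ as the unique $\RM$-irreducible $\mathscr J$-class with $M_{J_1}=G_{J_1}=F^\times$, verify the hypothesis of Theorem~\ref{t:llm.case.nice} via line--hyperplane duality, and invoke the classical minimal-degree formula for cyclic groups. The only cosmetic differences are that you phrase the idempotent condition through the sandwich matrix (which the paper notes is equivalent) and handle $q=2$ via Corollary~\ref{c:aggm} rather than observing that the trivial group has minimal degree $1$.
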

\begin{proof}
Let $J$ be the $\mathscr J$-class of rank $1$ matrices. We view $F^n$ as column vectors and $M_n(F)$ as acting on the left.  Note that $G_J\cong F^\times$, which is cyclic of order $q-1$.

It is well known and easy to see that $\equiv_J$ (and hence $\equiv_{\RM,J}$) is the equality relation, and so $M_n(F)$ is Rhodes semisimple.  Indeed, suppose $A\neq B\in M_n(F)$.  Then  without loss of generality,  $Bv\neq Av$ for some $v\in F^n\setminus \{0\}$.  Choose a functional $f\colon F^n\to F$ with $f(Av-Bv)\neq 0$, and let $w\in F^n$ with $w^T x=f(x)$ for all $x\in F^n$.  Then $X=[w\mid 0]^T$ and $Y=[v\mid 0]$ are rank $1$ matrices with \[X(A-B)Y = \begin{bmatrix}f(Av-Bv) & 0_{1,n-1} \\ 0_{n-1,1} & 0_{n-1}\end{bmatrix}\neq 0.\]  Thus $XAY\neq XBY$ (and, in particular, they are not both $0$), and so $A\not\equiv_J B$.

Also $J$ is the unique $\RM$-irreducible $\J$-class as a consequence of Proposition~\ref{p:irredundant}, since $\equiv_{\RM,J}$ is the equality relation.  Clearly, $M_J=G_J$ since only the zero $\J$-class is below $J$.  Finally, note that the conditions of Theorem~\ref{t:llm.case.nice} apply.  The $\mathscr L$-class of a matrix is determined by its right null space and the $\mathscr R$-class is determined by its column space. If $V$ is a one-dimensional subspace and $W$ is an $(n-1)$-dimensional subspace, then the $\mathscr H$-class consisting of rank $1$ matrices with column space $V$ and right null space $W$ contains an idempotent if and only if $V\cap W=\{0\}$.  The corresponding idempotent is the projection from $F^n\to V$ with kernel $W$ coming from the direct sum decomposition $F^n=V\oplus W$.  If we have two $\mathscr L$-classes corresponding to matrices with one-dimensional column spaces $V_1\neq V_2$, then we can choose a hyperplane $W$ containing $V_2$ but not $V_1$, and so the $\mathscr H$-class corresponding to $V_1$ and $W$ contains an idempotent, whereas that corresponding to $V_2$ and $W$ does not.

Theorem~\ref{p:llm.case} now implies the minimal degree of $M_n(F)$ is the product of the number of $\mathscr L$-classes of $J$ with the minimal degree of a faithful permutation representation of the cyclic group $F^\times$.  Note that $J$ has $(q^n-1)/(q-1)$  $\mathscr L$-classes, as these are in bijection with $(n-1)$-dimensional (or equivalently $1$-dimensional) subspaces of $F^n$.  If $q=2$, then trivially the minimal degree of $F^\times$ is $1$.    The minimal faithful degree of a permutation representation of a finite abelian group is well known~\cite{minimalperm}. For a nontrivial cyclic group, the minimal degree is the sum of the prime powers appearing in the prime factorization of its order.  This completes the proof.
\end{proof}

Suppose that $J$ is a regular $\mathscr J$-class with maximal subgroup $G$ with identity $e$ and let $X$ be a $G$-set.  Then to compute the Green's quotient $(X\otimes_{G} R_e)/{\sim}$, it turns out that we need to only understand the action of elements of the $\mathscr L$-class $L_e$ of $e$.  More precisely, we have that $x\otimes r\sim x'\otimes r'$ if and only if $(x\otimes r)t=(x'\otimes r')t$ for all $t\in L_e$.
Indeed, if $s\in S$, then there are two cases.  If $se\notin J$, then $se$ annihilates $R_e$ and hence annihilates $X\otimes_{G} R_e$.  Thus $(x\otimes r)se=(x'\otimes r')se$ for all $x,x'\in X$ and $r,r'\in R$.  Otherwise, $se\in L_e$ by stability, and so taking $t=se$  proves the claim (using the definition of $\sim$).

By Rees's theorem, we have that $J^0$ is isomorphic to a Rees matrix semigroup $\mathscr M^0(G,A,B,C)$ where $A$ is in bijection with the set of $\mathscr R$-classes of $J$ and $B$ is in bijection with the set of $\mathscr L$-classes of $J$.   We shall use $1$ for the identity of $G$ when $G$ is viewed as an abstract group for forming the Rees matrix semigroup. Let us assume that $a_0$ corresponds to the $\mathscr R$-class of $e$ and $b_0$ to the $\mathscr L$-class of $e$.  Without loss of generality, we may assume that $C_{b_0a_0}=1$, whence $e=(a_0,1,b_0)$ after identifying $J^0$ with the Rees matrix semigroup and $G$ is identified with $\{a_0\}\times G\times \{b_0\}$.   The $\mathscr R$-class $R_e$ of $e$ then becomes $\{a_0\}\times G\times B$.  We can choose the transversal  $T=\{(a_0,1,b)\mid b\in B\}$ for the $\mathscr H$-classes of $R_e$.  If $X$ is a $G$-set, then we recall that each element of $X\otimes_{G} R_e$ can be uniquely written in the form $x\otimes (a_0,1,b)$ with $b\in B$.  To simplify the notation, we shall simply write $(x,b)$ for this element.

\begin{Prop}\label{p:green.in.coord}
Retaining the notation of the previous two paragraphs, if $(a,g,b_0)\in L_e$, $x\in X$ and $b\in B$, then
 \begin{equation}\label{eq:coord.action}
 (x,b)(a,g,b_0) = \begin{cases}(xC_{ba}g,b_0), & \text{if}\ C_{ba}\neq 0\\ \emptyset, & \text{else.}\end{cases}
 \end{equation}
  Consequently, $(x,b)\sim (x',b')$ if and only if $xC_{ba}=x'C_{b'a}$ for all $a\in A$ where we interpret $y0=0$ for $y\in X$. %$x=x'$ and $G_xC_{ba}=G_xC_{b'a}$ for all $a\in A$, where $G_x$ is the stabilizer of $x$ and we put $G_x0=0$.
\end{Prop}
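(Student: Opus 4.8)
The plan is to prove the two assertions in turn: first the explicit action formula \eqref{eq:coord.action}, which is a direct computation inside the Rees coordinatization $\mathscr M^0(G,A,B,C)$, and then the characterization of $\sim$, which combines \eqref{eq:coord.action} with the reduction recalled just above (that $\sim$ is already detected by the action of elements of $L_e$).

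For the action formula I would start from the definition $(x,b)=x\otimes (a_0,1,b)$ together with the rule $(x\otimes r)s = x\otimes rs$ for the $S$-action on $X\otimes_{G} R_e$. Multiplying in the Rees semigroup gives $(a_0,1,b)(a,g,b_0)=(a_0,C_{ba}g,b_0)$ when $C_{ba}\neq 0$ and equals $0$ otherwise. In the first case the product lies in $R_e$, so $(x,b)(a,g,b_0)=x\otimes (a_0,C_{ba}g,b_0)$; since $H_e$ is identified with $G$ acting freely on the left of $R_e$ and $C_{b_0a_0}=1$, we have $(a_0,C_{ba}g,b_0)=(C_{ba}g)\cdot (a_0,1,b_0)$, and the tensor relation $x\otimes hr = xh\otimes r$ rewrites this as $(xC_{ba}g)\otimes (a_0,1,b_0)=(xC_{ba}g,b_0)$. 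In the second case the Rees product is $0\notin J$, so the partial $S$-action is undefined and $(x,b)(a,g,b_0)=\emptyset$. This establishes \eqref{eq:coord.action}.

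For the characterization of $\sim$ I would invoke the fact recalled above, namely $x\otimes r\sim x'\otimes r'$ iff $(x\otimes r)t=(x'\otimes r')t$ for all $t\in L_e$. Since $L_e=\{(a,g,b_0)\mid a\in A,\ g\in G\}$ in Rees coordinates, the relation $(x,b)\sim (x',b')$ becomes the assertion that $(x,b)(a,g,b_0)=(x',b')(a,g,b_0)$ for all $a\in A$ and $g\in G$. Fixing $a$ and applying \eqref{eq:coord.action}, there are three cases according to whether $C_{ba}$ and $C_{b'a}$ vanish. If both are nonzero, the two sides are $(xC_{ba}g,b_0)$ and $(x'C_{b'a}g,b_0)$, which agree for all $g$ iff $xC_{ba}=x'C_{b'a}$ (the forward direction by taking $g=1$, the converse by right-multiplying by $g$, using that $X$ is a right $G$-set). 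If exactly one of $C_{ba},C_{b'a}$ is zero, one side is defined and the other is $\emptyset$ for every $g$, so equality fails; if both are zero, both sides are $\emptyset$, so equality holds trivially. Under the convention $y0=0$, these three cases are precisely the three cases of the equation $xC_{ba}=x'C_{b'a}$, so the conditions match for each $a$ and the stated equivalence follows.

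The only genuine obstacle is the careful bookkeeping matching the "defined versus undefined" dichotomy of the partial action to the "nonzero versus $0$" dichotomy encoded by the convention $y0=0$; in particular one must check that a mismatch (exactly one of $C_{ba},C_{b'a}$ zero) breaks the equivalence simultaneously in both formulations. Reducing the quantifier over $g\in G$ to the single instance $g=1$ via the $G$-set structure is the other point to record cleanly, and everything else is formal.
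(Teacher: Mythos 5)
Your proof is correct and follows essentially the same route as the paper: compute the Rees product $(a_0,1,b)(a,g,b_0)$ and slide the resulting group coordinate across the tensor via $x\otimes hr = xh\otimes r$, then deduce the description of $\sim$ from the fact that it is detected by the action of $L_e$. The paper leaves the second part as "immediate"; your case analysis on the vanishing of $C_{ba}$, $C_{b'a}$ (with the reduction to $g=1$) is exactly the bookkeeping it implicitly relies on.
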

\begin{proof}
Equation~\eqref{eq:coord.action} follows because
\begin{align*}
(x,b)(a,g,b_0) & =(x\otimes(a_0,1,b))(a,g,b_0) \\ &= \begin{cases}x\otimes (a_0,C_{ba}g, b_0), & \text{if}\ C_{ba}\neq 0, \\ \emptyset, & \text{else} \end{cases} \\ &= \begin{cases}(xC_{ba}g,b_0), & \text{if}\ C_{ba}\neq 0, \\ \emptyset, & \text{else.}\end{cases}
\end{align*}

The final statement is immediate from the first statement and the discussion in the previous two paragraphs.
%For the final statement, note that $(x,b)e= (x,b_0)$ and $(x',b')e = (x',b_0)$ by \eqref{eq:coord.action} and so $(x,b)\sim (x',b')$ implies $(x,b_0)=(x',b_0)$ and hence $x=x'$.  So from now on assume that $x=x'$.  Then
%the discussion of the previous two paragraphs shows that $(x,b)\sim (x',b')$ if and only if $(x,b)(a,g,b_0)=(x',b')(a,g,b_0)$ for all %$a\in A$ and $g\in G$.  But \eqref{eq:coord.action} shows this occurs if and only if, for each $a\in A$, either $C_{ba}=0=C_{b'a}$ or %$xC_{ba}=x'C_{b'a}$.  But this is equivalent to $G_xC_{ba}=G_xC_{b'a}$ for all $a\in A$.  This completes the proof.
\end{proof}

Let us consider the special case of a simple Rhodes semisimple semigroup $S$ with two $\mathscr L$-classes and two $\mathscr R$-classes. It follows from Theorem~\ref{t:proportional} that up to isomorphism $S$ is isomorphic to a Rees matrix semigroup over a group $G$ with sandwich matrix \[C=\begin{bmatrix} 1 & 1\\ 1& g\end{bmatrix}\] with $g\neq 1$.  We show that the minimal degree of a faithful action of $S$ by partial (or total) transformations is determined by how $g$ behaves under permutation representations of $G$.  We consider a slightly more general case.

\begin{Prop}\label{p:single.diagonal}
Let $G$ be a finite group, put $g_1=1$ and let $g_2,\ldots, g_n\in G$ be nontrivial elements (possibly with repetitions).
Let $S=\mathscr M(G,n,n,C)$ where  \[C=\begin{bmatrix} 1 & 1 & 1\cdots & 1\\ \vdots & g_2 & \cdots &1 \\ & & \ddots &\vdots\\ 1& \cdots & 1 &g_n  \end{bmatrix}.\]  Then both the minimal degree of a faithful action of $S$ by partial transformations and by total transformations is  \[\min\left\{n\cdot |X|-\sum_{x\in X}|\{2\leq i\leq n\mid g_i\in G_x\}|\right\}\] where $|X|$  runs over all faithful $G$-sets $X$ (or equivalently all faithful $G$-sets with no two isomorphic orbits) and $G_x$ denotes the stabilizer of $x\in X$.  In particular, if $g_2=\cdots =g_n=g\neq 1$, then  both the minimal degree of a faithful action of $S$ by partial transformations and by total transformations is  \[\min\left\{n\deg(\rho)-(n-1)|\mathrm{Fix}(\rho(g))|\right\}\] as $\rho$ runs over all faithful permutation representations of $G$ (or equivalently all faithful permutation representations with no two isomorphic orbits).
\end{Prop}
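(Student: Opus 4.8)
The plan is to apply the machinery already developed, specialized to this concrete Rees matrix semigroup. First I would verify that $S = \mathscr M(G,n,n,C)$ is simple (it is a Rees matrix semigroup with no zero) and Rhodes semisimple, so that Theorem~\ref{t:min.degree} applies with a single $\RM$-irreducible $\J$-class, namely $J = S$ itself. Since $S$ has no proper $\J$-classes below it, there are no constraints from smaller classes, so $M_J = G_J = G$, and a faithful action must restrict to a faithful $G$-set $X$ on $\Omega e_J$. Thus by the analysis preceding Theorem~\ref{t:min.degree} the minimal degree equals $d_J = \min_X |(X \otimes_G R_J)/{\sim}|$ where $X$ ranges over faithful $G$-sets with no two isomorphic orbits. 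The core of the proof is therefore to compute the cardinality of the Green's quotient $(X\otimes_G R_J)/{\sim}$ explicitly using the coordinates of Proposition~\ref{p:green.in.coord}.

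Next I would compute $|(X\otimes_G R_J)/{\sim}|$ using Proposition~\ref{p:green.in.coord}. Here $A = B = \{1,\ldots,n\}$ with $e_J$ in position $(a_0,b_0) = (1,1)$, and the sandwich matrix has $C_{ii} = g_i$ (with $g_1 = 1$) and $C_{ij} = 1$ for $i \neq j$. Elements of $X \otimes_G R_J$ are pairs $(x,b)$ with $x\in X$, $b\in B$, and by Proposition~\ref{p:green.in.coord} we have $(x,b)\sim(x',b')$ if and only if $xC_{ba} = x'C_{b'a}$ for all $a\in A$. I would show that for $b\neq b'$ the rows $C_{b\ast}$ and $C_{b'\ast}$ of the sandwich matrix differ in at least two positions (so this $J$-class actually satisfies the hypothesis of Proposition~\ref{p:llm.case} only when the relevant entries let us separate rows; in general here they do since $C_{bb}=g_b$ or $1$ while $C_{b'b}=1$), forcing $(x,b)\not\sim(x',b')$ whenever $b\neq b'$. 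The only collapsing therefore happens within a fixed block $b$: within block $b$, the condition $xC_{ba} = x'C_{ba}$ for all $a$ reduces, since $C_{ba}=1$ for $a\neq b$ and $C_{bb}=g_b$, to $x = x'$ (the $a\neq b$ entries already force $x=x'$). Wait — this shows no collapsing at all in general, so I must instead track collapsing carefully block by block: in block $b_0=1$ (where $g_1=1$) all entries of the row are $1$, and in block $b$ the entry $g_b$ can coincide with neighboring blocks only through the action, so the genuine identifications come from comparing $(x,b)$ with $(x',1)$.

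The counting then proceeds as follows: I would argue that $(x,i)\sim(x',1)$ (for $2\le i\le n$) exactly when $x g_i = x'$ is forced consistently across all coordinates, which happens precisely when $g_i$ stabilizes $x$, i.e. $g_i\in G_x$ and $x=x'$. This gives that each pair $(x,i)$ with $g_i\in G_x$ is identified with $(x,1)$, so each such element is \emph{not} new, reducing the count by one. Summing over $x\in X$ and $2\le i\le n$ yields the cardinality $n|X| - \sum_{x\in X}|\{2\le i\le n\mid g_i\in G_x\}|$, and minimizing over faithful $G$-sets $X$ (with no repeated orbits, justified by the coproduct remark before Theorem~\ref{t:min.degree}) gives the stated formula. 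The specialization $g_2=\cdots=g_n=g$ then follows since $|\{2\le i\le n\mid g\in G_x\}|$ equals $n-1$ when $g$ fixes $x$ and $0$ otherwise, so $\sum_{x\in X}|\cdots| = (n-1)|\mathrm{Fix}(\rho(g))|$, giving $n\deg(\rho)-(n-1)|\mathrm{Fix}(\rho(g))|$. The equality of the partial- and total-transformation degrees follows from the Remark preceding Corollary~\ref{c:finite.field}, since $S$ has no zero (or by adding a sink).

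**Main obstacle.** The delicate step I expect to be the crux is the exact determination of which pairs $(x,i)$ and $(x',j)$ become identified under $\sim$, i.e. correctly reading off the equivalence $xC_{ba} = x'C_{b'a}$ for all $a$ from the specific structure of $C$. The subtlety is that the off-diagonal $1$'s force strong constraints: I need to check that the \emph{only} nontrivial identifications pair $(x,i)$ with $(x,1)$ precisely when $g_i\in G_x$, and that no identifications occur between two diagonal blocks $i,j\ge 2$ with $i\neq j$, nor collapse distinct $x$'s. Handling the interplay between the diagonal entry $g_b$ and the uniform off-diagonal $1$'s — and confirming that the count is exactly a decrement by the number of fixed indices rather than something more entangled — is where the real care is needed.
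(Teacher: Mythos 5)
Your overall route is the paper's: reduce via Theorem~\ref{t:min.degree} to minimizing $|(X\otimes_G R_J)/{\sim}|$ over faithful $G$-sets $X$ with no repeated orbits, compute the quotient in the coordinates of Proposition~\ref{p:green.in.coord}, and count. Your eventual criterion $(x,i)\sim(x',1)$ iff $x=x'$ and $g_i\in G_x$, and the resulting count, are correct. But the step you yourself flag as the crux is not correctly planned. Your first attempt --- invoking Proposition~\ref{p:llm.case} to rule out identifications across blocks --- cannot be repaired: $C$ has no zero entries, so every $\HH$-class of $J$ contains an idempotent and the hypothesis of that proposition is never satisfied here; moreover, cross-block identifications are precisely what produce the correction term in the formula. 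After self-correcting, you state that what remains to be checked is that ``no identifications occur between two diagonal blocks $i,j\ge 2$.'' That statement is false: $(x,i)\sim(x,j)$ whenever $g_i,g_j\in G_x$ (by transitivity through $(x,1)$, or directly). What actually must be proved --- and what the paper does in two lines --- is: (i) since column $1$ of $C$ consists of $1$'s, $(x,b)\sim(x',b')$ forces $x=xC_{b1}=x'C_{b'1}=x'$; and (ii) for fixed $x$, rows $i$ and $j$ of $C$ differ only in columns $i$ and $j$, so $(x,i)\sim(x,j)$ if and only if $g_i,g_j\in G_x$. This pins down the equivalence classes completely (for each $x$: one class $\{(x,i)\mid g_i\in G_x\}$ plus singletons $(x,k)$ with $g_k\notin G_x$), which is what legitimizes your count; as planned, your verification would stall because the claim you propose to verify is untrue.

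The second gap is the total-transformation half of the statement. The Remark preceding Corollary~\ref{c:finite.field} concerns semigroups \emph{with} a zero, so it does not apply, and adding a sink to a minimal faithful partial action only gives $\mu(S)\le m(S)+1$, not equality. The paper's argument is specific to simplicity: for $r\in R_e$ and $s\in S$ we have $rs\J r$, hence $rs\R r$ by stability, so $S$ acts on $R_e$ by \emph{total} maps; therefore $X\otimes_G R_e$ and its Green's quotient carry total actions, the minimal faithful partial action is itself total, and the two minimal degrees coincide. (A smaller omission: you assert but never check that $S$ is Rhodes semisimple; the paper verifies directly that $\equiv_S$ is the equality relation, and alternatively one can check via Theorem~\ref{t:proportional} that no two columns of $C$ are right proportional and no two rows are left proportional.)
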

\begin{proof}
Since $S$ is simple, it is a regular $\J$-class. Put $g_0=1$.
Let $e=(1,1,1)$ and identify $R_e= \{1\}\times G\times [n]$, $L_e=[n]\times G\times \{1\}$ and $G_e=\{1\}\times G\times \{1\}$.  Note that if $s=(i,g,j)$, then $ese= (1,g,1)$, and so if $(i,g,j)\equiv_S (i',g',j')$, then $g=g'$.  Suppose that $(i,g,j)\equiv_S (i',g,j')$. If $i\neq i'$, say with $i\neq 1$, then $(1,1,i)(i,g,j)e= (1,g_ig,1)\neq (1,g,1)=(1,1,i)(i',g,j)e$, since $g_i\neq 1$.  Thus $i=i'$. A similar argument shows that $j=j'$, and so $\equiv_S$ is the equality relation, hence $S$ is Rhodes semisimple.

By Theorem~\ref{t:min.degree} (and its proof), a minimal degree faithful action by partial transformations is a Green's quotient $(X\otimes_G R_e)/{\sim}$ for some faithful $G$-set $X$.  Note that $S$ acts on $R_e$ by total mappings and hence $(X\otimes_G R_e)/{\sim}$ has an action of $S$ by total mappings.

As before, we identify $X\otimes_G R_e$ with $X\times [n]$. For any $b,b'\in [n]$, we have $C_{b1} =1=C_{b'1}$.  So by Proposition~\ref{p:green.in.coord} we have that $(x,b)\sim (x',b')$ implies $x=xC_{b1}=x'C_{b'1}=x'$.  Thus we may only identify pairs of the form $(x,i)$ and $(x,j)$ under $\sim$, and we do so if and only if $xC_{ia}=xC_{ja}$ for all $a\in [n]$.  The only two choices of $a$ for which $C_{ia}\neq C_{ja}$ is when $a=i$ or $a=j$.  In the first case, we obtain the condition $xg_i = x$ and in the second $x=xg_j$.  Thus  $(x,i)\sim (x,j)$ if and only if $g_i,g_j\in G_x$.   So the distinct elements of $(X\otimes_G R_e)/{\sim}$ are the classes $[x,1]$, with $x\in X$,  and the classes $[x,k]$ with $x\in Z$, $2\leq k\leq n$ and $g_k\notin G_x$.  The number of such classes is $n\cdot |X|-\sum_{x\in X}|\{2\leq i\leq n\mid g_i\in G_x\}|$.

This final statement follows by noting that if $g_2=\cdots =g_n=g$, then $\sum_{x\in X}|\{2\leq i\leq n\mid g_i\in G_x\}| = (n-1)|\mathrm{Fix}(\rho(g))|$ for a permutation representation $\rho$ of $G$ on the set $X$.
\end{proof}

Specializing to the case that $n=2$, we obtain the following.

\begin{Cor}\label{c:single.non.id}
Let $S=\mathscr M(G,2,2,C)$ where $G$ is a nontrivial group, $1\neq g\in G$ and \[C=\begin{bmatrix} 1 & 1\\ 1& g\end{bmatrix}.\]  Then both the minimal degree of a faithful action of $S$ by partial transformations and by total transformations is  \[\min\left\{2\deg(\rho)-|\mathrm{Fix}(\rho(g))|\right\}\] as $\rho$ runs over all faithful permutation representations of $G$ (or equivalently all faithful permutation representations with no two isomorphic orbits).
\end{Cor}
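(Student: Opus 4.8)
The plan is to recognize Corollary~\ref{c:single.non.id} as the special case $n=2$ of Proposition~\ref{p:single.diagonal}, so that essentially no new work is required. First I would verify that the two sandwich matrices agree under this specialization. The matrix appearing in Proposition~\ref{p:single.diagonal} has an all-ones first row and first column, with the entries $g_2,\ldots,g_n$ occupying the diagonal positions $(2,2),\ldots,(n,n)$; for $n=2$ this collapses to $C=\left(\begin{smallmatrix}1&1\\1&g_2\end{smallmatrix}\right)$. Setting $g_2=g$ makes this identical to the matrix $C$ of the corollary, and the standing hypothesis that $g_2=g\neq 1$ is exactly the hypothesis of the corollary. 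Hence the proposition applies word for word.

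Next I would appeal to the explicit uniform case recorded in the final sentence of Proposition~\ref{p:single.diagonal}, in which $g_2=\cdots=g_n=g$. There both the minimal degree by partial transformations and by total transformations is shown to equal $\min\{n\deg(\rho)-(n-1)|\mathrm{Fix}(\rho(g))|\}$, the minimum ranging over all faithful permutation representations $\rho$ of $G$ (equivalently, over those with no two isomorphic orbits). Substituting $n=2$ turns the coefficient $n-1$ into $1$ and yields $\min\{2\deg(\rho)-|\mathrm{Fix}(\rho(g))|\}$, which is precisely the asserted value.

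I expect no genuine obstacle, since this is a pure specialization; the only thing to watch is the index bookkeeping, namely confirming that the lone off-diagonal parameter $g_2$ is the $g$ of the corollary and that $n-1$ becomes $1$. The coincidence of the partial- and total-transformation degrees, as well as the reduction to representations without repeated orbits, is already part of the conclusion of Proposition~\ref{p:single.diagonal} and so is inherited automatically.
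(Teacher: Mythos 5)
Your proposal is correct and matches the paper exactly: the paper derives Corollary~\ref{c:single.non.id} with the single remark ``Specializing to the case that $n=2$, we obtain the following,'' i.e., as the $n=2$ instance of Proposition~\ref{p:single.diagonal}, which is precisely your argument. The bookkeeping you check (that $g_2=g\neq 1$ recovers the corollary's sandwich matrix, and that $n-1$ becomes $1$ in the formula) is all that is needed.
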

%\begin{proof}
%It is well known (and straightforward to check) that if $J=S$, then $\equiv_J$ is the equality relation precisely because $g\neq 1$.  Let $e=(1,1,1)$ and identify $R_e= \{1\}\times G\times \{1,2\}$, $L_e=\{1,2\}\times G\times \{1\}$ and $G$ with $\{1\}\times G\times \{1\}$.  Then by Theorem~\ref{t:min.degree} (and its proof), a minimal degree faithful action by partial transformations is a Green's quotient $(X\otimes_G R_e)/{\sim}$ for some faithful $G$-set $X$.  Note that $S$ acts on $R_e$ by total mappings and hence $(X\otimes_G R_e)/{\sim}$ has an action of $S$ by total mappings.
%
%As before, we identify $X\otimes_G R_e$ with $X\times \{1,2\}$. Since $C_{b1} =1=C_{b'1}$, we see from Proposition~\ref{p:green.in.coord} that $(x,b)\sim (x',b')$ implies $x=xC_{b1}=x'C_{b'1}=x'$.  Thus we may only identify pairs of the form $(x,1)$ and $(x,2)$ under $\sim$, and we do so if and only if $x=xC_{12}=xC_{22}=xg$, that is, $g$ fixes $x$. So the distinct elements of $(X\otimes_G R_e)/{\sim}$ are the classes $[x,1]$ with $x$ a fixed point of $g$ and the classes $[x,1], [x,2]$ with $x$ not a fixed point of $g$.   Thus $|(X\otimes_G R_e)/{\sim}| = 2|X|-k$ where $k$ is the number of fixed points of $g$ on $X$. This proves the proposition.
%\end{proof}

We remark that $2\deg(\rho)-|\mathrm{Fix}(\rho(g))|=\deg(\rho)+ |\{x\mid x\rho(g)\neq x\}|$.
A similar argument to that of Proposition~\ref{p:single.diagonal} shows that if $g_1,\ldots, g_n$ are distinct nontrivial elements of a group $G$ and \[C= \begin{bmatrix} 1& 1&\cdots & 1\\ 1 & g_1 & \cdots &g_n\end{bmatrix},\] then the minimal faithful degree of $\mathscr M(G,n,2,C)$ is the minimum of \[2\deg (\rho)- \left|\bigcap_{i=1}^n \mathrm{Fix}(\rho(g_i))\right|\] where $\rho$ runs over all faithful permutation representations of $G$.

To indicate how difficult Corollary~\ref{c:single.non.id} is to apply in practice, we consider the case that $G$ is a symmetric group.
We recall that the Rees matrix semigroups with maximal subgroup $G$ and respective sandwich matrices \[\begin{bmatrix} 1 & 1\\ 1& g\end{bmatrix}, \begin{bmatrix} 1 & 1\\ 1& h\end{bmatrix}\] are isomorphic if and only if there is an automorphism
$\alpha$ of $G$ such that $h=\alpha(g)$. This explains why the outer automorphism on $S_6$ appears in the proof of the next theorem.

\begin{Thm}\label{t:sn.case}
Let $S=\mathscr M(S_n,2,2,C)$ where \[C=\begin{bmatrix} 1 & 1\\ 1& \sigma\end{bmatrix}\] with $\sigma\in S_n$ a nontrivial permutation.  Then the minimal degree faithful action of $S$ by either partial or total functions is $2n-|\mathrm{Fix}(\sigma)|$ except when $n=6$ and $\sigma$ is a derangement that is either a $6$-cycle, a product of two disjoint $3$-cycles or a product of three disjoint $2$-cycles.  In the first case, the minimal degree is $11$, in the second case it is $9$ and in the third case it is $8$.

\end{Thm}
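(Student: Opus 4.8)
The plan is to apply Corollary~\ref{c:single.non.id} with $G=S_n$ and $g=\sigma$, which reduces the problem to computing $\min_\rho\bigl(2\deg(\rho)-|\mathrm{Fix}(\rho(\sigma))|\bigr)$ as $\rho$ ranges over the faithful permutation representations of $S_n$. Decomposing $\rho$ into transitive orbits, this quantity is additive over orbits, and (by the remark following Corollary~\ref{c:single.non.id}) each summand equals $|\Omega_i|+\mathrm{move}_{\Omega_i}(\sigma)$, hence is positive. Since for $n\ge 5$ the only normal subgroups of $S_n$ are $1,A_n,S_n$, a faithful representation must contain at least one orbit $S_n/H$ with $H\not\supseteq A_n$; such an $H$ is automatically core-free (its core is normal, contained in $H$, so trivial), and a single such orbit is already faithful. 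Discarding the other (positive) orbits can only decrease the total, so the minimum is attained on a single faithful transitive action $\Omega=S_n/H$, of value $|\Omega|+\mathrm{move}_\Omega(\sigma)\ge|\Omega|$. In particular, only actions of degree $<2n-\mathrm{Fix}(\sigma)\le 2n$ can possibly beat the natural action, which realises the value $2n-\mathrm{Fix}(\sigma)$; for the three $S_6$ cases the upper bound will instead come from the exotic degree-$6$ action described below. (The cases $n\le 4$ are checked directly, so assume $n\ge 5$.)

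The heart of the argument is therefore to classify the core-free $H\le S_n$ of index $<2n$ and evaluate $2[S_n:H]-\mathrm{Fix}_{S_n/H}(\sigma)$ on each. First I would dispose of $n\ge 7$. Using the description of the maximal subgroups of $S_n$ (intransitive $S_k\times S_{n-k}$ of index $\binom nk$, imprimitive $S_a\wr S_b$, and primitive) together with standard order bounds for primitive groups, one checks that for $n\ge 7$ every proper subgroup not containing $A_n$ has index $\ge n$, that index exactly $n$ forces $H$ to be a point stabiliser $S_{n-1}$, and that the next smallest index is $\ge 2n$. Hence for $n\ge 7$ the only competitor in the relevant range is the natural action, and the minimum is $2n-\mathrm{Fix}(\sigma)$, giving the non-exceptional formula.

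For $n=5$ there is exactly one extra core-free subgroup of index $<2n=10$, namely the Frobenius group $\mathrm{AGL}(1,5)$ of index $6$; a direct check over the few conjugacy classes of $S_5$ shows this degree-$6$ action never beats the natural one, so the answer is again $2n-\mathrm{Fix}(\sigma)$. The genuinely exceptional case is $n=6$. Here the index-$6$ subgroups form two classes — the point stabilisers $S_5$ and the transitive $S_5\cong\mathrm{PGL}(2,5)$ coming from the outer automorphism $\alpha$ of $S_6$ — and the only further core-free subgroup of index $\le 11$ is $S_3\wr S_2$ (index $10$). The exotic degree-$6$ action is $\rho_{\mathrm{nat}}\circ\alpha$, whose permutation character at $\sigma$ is $\mathrm{Fix}_{\mathrm{nat}}(\alpha(\sigma))$. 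Using that $\alpha$ interchanges the cycle-type classes $2\,1^4\leftrightarrow 2^3$, $3\,1^3\leftrightarrow 3^2$ and $6\leftrightarrow 3\,2\,1$ while fixing all others, one tabulates the values of the natural, exotic, and $S_3\wr S_2$ actions on every class. The exotic action strictly improves on the natural one exactly for $\sigma$ of type $6$, $3^2$ and $2^3$ (giving $12-1=11$, $12-3=9$ and $12-4=8$), and for these three classes the bound $\mathrm{value}\ge\deg$, together with the absence of core-free subgroups of index $7,8,9,11$, shows nothing does better; for every other $\sigma$ the natural action wins and the value is $12-\mathrm{Fix}(\sigma)$.

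The main obstacle I anticipate is the $n=6$ bookkeeping: pinning down the action of $\alpha$ on conjugacy classes (in particular that both order-$4$ classes $4\,1^2$ and $4\,2$ are fixed, which is why the derangement type $4\,2$ is \emph{not} exceptional, and that $6\leftrightarrow 3\,2\,1$, forced by comparing squares and cubes), and computing $\mathrm{Fix}_\Omega(\sigma)$ for the degree-$10$ action on partitions of $\{1,\dots,6\}$ into two $3$-sets to be sure it never competes. The general input (the small-index subgroup classification and primitive-group order bounds) is what makes $n\ge 7$ uniform; the low-degree phenomena for $n=5,6$ then reduce to a finite, if delicate, computation.
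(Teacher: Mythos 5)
Your proposal is correct and follows essentially the same route as the paper: reduce via Corollary~\ref{c:single.non.id}, use the subdirect indecomposability of $S_n$ to pass to a single faithful transitive orbit, classify the faithful transitive actions of small degree, and handle the $n=6$ exceptions via the outer automorphism. The only differences are minor: the paper uses the sharper cutoff $2d-|\mathrm{Fix}|\geq d+2$, so it need only consider degrees $<2n-2$ and can cite the classification in~\cite[Theorem~5.2B]{dixonbook} directly (and dispatches the $\mathrm{PGL}_2(\mathbb F_5)$ action by sharp $3$-transitivity), whereas your weaker cutoff of $2n$ forces the extra, ultimately harmless, check of the index-$10$ subgroup $S_3\wr S_2$ of $S_6$ and a re-derivation of the small-index subgroup classification for $n\geq 7$.
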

\begin{proof}
We apply Corollary~\ref{c:single.non.id}.  Taking as our faithful action  $\rho$ of $S_n$ the natural action yields an upper bound of $2n-|\mathrm{Fix}(\sigma)|$. In the case that $n=6$, it is well known that there is a unique nontrivial outer automorphism of $S_6$.  Moreover, any automorphism in the nontrivial class interchanges the conjugacy class of $6$-cycles with the conjugacy class of cycle type $(3,2)$,
the conjugacy class of a product of two disjoint $3$-cycles with the conjugacy class of a $3$-cycle and of transpositions with the conjugacy class of cycle type $(2,2,2)$.  All other conjugacy classes are preserved by the nontrivial outer automorphism.  Using an automorphism representing the nontrivial outer automorphism of $S_6$ yields an action of $S_6$ on $6$ points yielding the improved bound for $6$-cycles, 
products of two disjoint $3$-cycles and products of three disjoint $2$-cycles.

We must now prove that you cannot do better than the natural action in any other case.  The key observation, is that with very few exceptions, $S_n$ has no faithful transitive actions of degree strictly less than $2n-2$ except the standard one.  If  $\rho$ is a faithful permutation representation of degree $d\geq 2n-2$, then $2d-|\mathrm{Fix}(\rho(g))|\geq d+2\geq 2n$, since a nontrivial permutation moves at least two elements, and so this permutation representation won't beat the standard one.  Thus we are essentially left with checking these exceptions by hand.

All symmetric groups have a unique minimal normal subgroup, which is the alternating group $A_n$ except for $n=4$, where it is the regular Klein $4$-subgroup of $S_4$ consisting of the identity and the three derangements of order $2$.  Thus $S_n$ is subdirectly indecomposable and so it follows that any faithful action of $S_n$ contains an orbit that is faithful and hence, from the point of view of minimizing $2\deg(\rho)-|\mathrm{Fix}(\rho(\sigma))|$, we may restrict to transitive actions.  Noting that $\binom{n}{2}<2n-2$ only when $n=2, 3$, we see from~\cite[Theorem~5.2B]{dixonbook} that the only cases of transitive faithful actions of $S_n$ on fewer than $2n-2$ points are $S_n$ acting on $[n]$ in the usual way, the action of $S_6$ on $[6]$ corresponding to a nontrivial outer automorphism and the transitive action of $S_5\cong\mathrm{PGL}_2(\mathbb F_5)$ on the $6$-element projective line  over the $5$-element field $\mathbb F_5$.  We already observed that the action of $S_6$ on $6$ points coming from a nontrivial outer automorphism yields a better bound than the standard action for $6$-cycles and products of $3$-disjoint $2$-cycles and does not improve the bound for any other choice of $\sigma$.  So we are left with  the degree $6$ action of $S_5\cong \mathrm{PGL}_2(\mathbb F_5)$.
The action of $\mathrm{PGL}_2(\mathbb F_5)$ on the projective line is well known to be sharply $3$-transitive and hence any nonidentity element fixes at most $2$ elements, that is, moves at least $4$ elements.  Since $6+4= 10\geq 10-|\mathrm{Fix}(\sigma)|$, we conclude that for any $\sigma\neq 1$, the action of $S_5$ on $6$ points provide no better a bound than the standard action.  This completes the proof.
\end{proof}

We also look at the case of a simple semigroup with maximal subgroup $S_n$ and sandwich matrix \[C=\begin{bmatrix} 1 & 1\\ 1& 1\end{bmatrix}.\]

By Theorem~\ref{t:proportional} this semigroup is neither $\RM$ nor $\mathsf{LM}$ and thus not $\GGM$. It follows that it is not a Rhodes semisimple semigroup. Moreover, it is a so called rectangular group easily seen to be isomorphic to $S_{n} \times RB(2,2)$, where $RB(2,2)$ is the $2 \times 2$ rectangular band. Thus the main results of this paper do not apply to this semigroup and we shall see in the next theorem that the answer is different from Corollary~\ref{c:single.non.id}. In fact, the minimal degrees of all rectangular groups are computed in~\cite{mindegrectband}, although one requires some work to get the precise answer. The next Theorem follows from the work in that paper, but we include a proof for completeness sake. We see that in the case of a semigroup that is not Rhodes semisimple, more combinatorial arguments are needed to compute minimal degrees. See~\cite{mindegrectband} for more examples along the lines of the proof below.

\begin{Thm}
The minimal degree of $RB(2,2)$ is 4. The minimal degree of the rectangular group $S_{n} \times RB(2,2)$ is $n+2$ for $n\geq 2$.
\end{Thm}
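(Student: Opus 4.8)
The plan is to prove the two statements separately, since $RB(2,2)$ is the special case $n=1$ handled by general considerations and the rectangular group case requires a genuine combinatorial argument. For $RB(2,2)$, the $2\times 2$ rectangular band, I would first observe that it is the Rees matrix semigroup $\mathscr M(\{1\},2,2,C)$ with $C$ the all-ones matrix, so it has trivial maximal subgroup, $2$ left-zero elements and $2$ right-zero elements and multiplication $(i,j)(k,\ell)=(i,\ell)$. The lower bound $\mu(RB(2,2))\geq 4$ should follow by a direct counting argument: an embedding into $T_m$ must separate the two $\mathscr R$-classes and the two $\mathscr L$-classes, and since the semigroup has no identity and each element is idempotent acting as a retraction, I would show that the four images force at least $4$ points. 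For the upper bound I would exhibit an explicit faithful action on $\{1,2,3,4\}$, for instance identifying $RB(2,2)$ with the four constant-on-blocks retractions, and check it is an embedding.

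For the rectangular group $S_n\times RB(2,2)$ with $n\geq 2$, the main content is showing the minimal degree equals $n+2$. For the upper bound I would construct an explicit faithful representation of degree $n+2$: take the disjoint union of a copy of $[n]$ carrying the natural $S_n$-action together with a two-element set carrying the $RB(2,2)$-structure, arranged so that an element $(\sigma,(i,j))$ acts by $\sigma$ on the $n$-point part and by the rectangular-band retraction on the $2$-point part, and verify this is faithful since $S_n$ is separated on $[n]$ and the rectangular band coordinates are separated on the two points. The key point is to check that total functions (not merely partial) suffice and that no collapsing occurs.

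The hard part will be the lower bound $\mu(S_n\times RB(2,2))\geq n+2$. Since this semigroup is \emph{not} Rhodes semisimple, Theorem~\ref{t:min.degree} does not apply and I cannot reduce to transitive representations; this is exactly the combinatorial difficulty flagged in the preceding paragraph. My strategy would be to analyze a hypothetical faithful action on a set $\Omega$ and track the images of the idempotents. Writing $RB(2,2)=\{(i,j)\mid i,j\in\{1,2\}\}$, the elements $(1,(i,j))$ with $\sigma=1$ are idempotents of $S_n\times RB(2,2)$, and I would use that the four subsets $\Omega(1,(i,j))$ must be genuinely distinct (since the action separates rectangular-band coordinates) to force extra points beyond those needed for the faithful $S_n$-part. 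Concretely, a faithful $S_n$-action already needs at least $n$ points (as $\mu(S_n)=n$ for $n\neq 6$, using subdirect indecomposability from the proof of Theorem~\ref{t:sn.case}), and I would argue that the rectangular band structure cannot be accommodated on those same $n$ points without destroying either faithfulness or the product structure, contributing the additional $+2$.

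The main obstacle is controlling how the $S_n$-factor and the $RB(2,2)$-factor interact on a common underlying set: because the two factors multiply independently but act on the same $\Omega$, a clever action might try to reuse points, and ruling this out requires a careful case analysis of how idempotents of the form $(1,(i,j))$ partition $\Omega$ and how $S_n$ permutes the resulting blocks. I would lean on the fact that the minimal normal subgroup of $S_n$ (the alternating group $A_n$, or the Klein four-group for $n=4$) must act faithfully on some orbit, pinning down the $n$-point block, and then show any realization of the four rectangular-band idempotents as distinct retractions of $\Omega$ forces at least two points outside that block. Special small cases ($n=2,3$, and possibly the exceptional behavior of $S_6$) would need to be checked by hand, mirroring the exceptional-case analysis in Theorem~\ref{t:sn.case}, though I expect these to be routine once the general mechanism is in place.
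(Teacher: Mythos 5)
Your upper bound construction for $S_n\times RB(2,2)$ does not work, and the failure is exactly the point of this example in the paper. On your disjoint union $[n]\sqcup\{a,b\}$, the elements $(1,(1,1))$ and $(1,(2,1))$ agree in the $S_n$-coordinate, so they act identically on $[n]$ and would have to be separated on the two-point part; that forces a faithful action of $RB(2,2)$ by total maps on two points, which is impossible ($RB(2,2)$ has four idempotents while $T_2$ has only three; any homomorphic image of $RB(2,2)$ in $T_2$ is right-zero or trivial). Indeed the impossibility of small faithful actions of $RB(2,2)$ is the first assertion of the very theorem you are proving, and the paper remarks that $S_n\times RB(2,2)$ has no faithful semisimple action at all: the group part and the band part must share points. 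The paper's construction does precisely this: it realizes $RB(2,2)$ by four rank-$2$ idempotents of $T_4$ with overlapping images $\{1,2\},\{1,3\}$ and overlapping kernels, extends them identically on $n-2$ extra points to get rank-$n$ idempotents of $T_{n+2}$, and takes the union of their four $\mathscr H$-classes inside the rank-$n$ $\mathscr J$-class, whose maximal subgroup is $S_n$; that union is a rectangular group isomorphic to $S_n\times RB(2,2)$.

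Your lower bound plan is also missing the key structural step that makes the paper's argument go through. Since $S_n\times RB(2,2)$ is simple, all its elements are $\mathscr J$-equivalent, so the image of any embedding into $T_{n+1}$ must lie in a single $\mathscr J$-class of $T_{n+1}$; the only $\mathscr J$-classes of $T_{n+1}$ containing a copy of $S_n$ are the group of units (impossible, being a group) and the $\mathscr J$-class $J_n$ of rank-$n$ maps. The paper then excludes $J_n$ by the egg-box/idempotent structure: an $\mathscr R$-class of $J_n$ corresponds to a partition of $[n+1]$ with a unique doubleton $\{i,j\}$ and contains exactly two idempotents, whose images are $\overline{i}$ and $\overline{j}$; hence two distinct $\mathscr R$-classes cannot both meet the same two $\mathscr L$-classes in idempotents, so $J_n$ contains no copy of $RB(2,2)$, and $\mu(S_n\times RB(2,2))\geq n+2$. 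Your proposed route via orbits of the minimal normal subgroup of $S_n$ never confronts the critical case $|\Omega|=n+1$: in a transformation monoid the group factor acts by permutations only on the image of the idempotent of its $\mathscr H$-class, not on all of $\Omega$, so orbit-counting and "blocks" do not by themselves force $n+2$ points (also, as a minor point, $\mu(S_6)=6$ too; the outer automorphism changes the action, not the degree). Replacing your treatment of the rectangular-group case by the simplicity-plus-idempotent-structure argument is what is needed to complete the proof.
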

\begin{proof}
Consider the following 4 functions in $T_4$:
\begin{gather*}
 f_{1,1}: 14\mapsto 1, 23\mapsto 2\\
f_{2,1}: 14\mapsto 1, 23\mapsto 3\\
f_{1,2}: 1\mapsto 1, 234 \mapsto 2\\
f_{2,2}: 1\mapsto 1, 234 \mapsto 3.
\end{gather*}

Direct calculations shows that $\{f_{i,j}|1 \leq i,j \leq 2\}$ form a semigroup isomorphic to $RB(2,2)$ and that $RB(2,2)$ is not isomorphic to a
subsemigroup of $T_3$. Therefore, $\mu(RB(2,2))=4$.

Now we extend each of the functions $f_{i,j}$ to functions from $\{1,\ldots, n+2\}$ to itself by making each act as the identity on $\{5,\ldots, n+2\}$.
This embeds $RB(2,2)$ as a subsemigroup of $T_{n+2}$ contained in the $\mathscr{J}$-class of elements of rank $n$. It follows that the union of the
4 $\mathscr{H}$-classes containing these 4 idempotents is a rectangular group subsemigroup of $T_{n+2}$. Since the maximal subgroup of the
$\mathscr{J}$-class of elements of rank $n$ is $S_{n}$, this is an embedding of $S_{n} \times RB(2,2)$ into $T_{n+2}$. Therefore,
 $\mu(S_{n} \times RB(2,2)) \leq n+2$.

Since the maximal subgroup of $S_{n} \times RB(2,2)$ is $S_{n}$, we have that $n \leq \mu(S_{n} \times RB(2,2)) \leq n+2$. Since the only $\mathscr{J}$-class of
$T_n$ containing $S_n$ is the group of units, it follows that $S_{n} \times RB(2,2)$ is not a subsemigroup of $T_n$.

Now consider a possible embedding of $S_{n} \times RB(2,2)$ into $T_{n+1}$. Since all elements of $S_{n} \times RB(2,2)$ are $\mathscr{J}$ equivalent, the image
of any such embedding is contained in a single $\mathscr{J}$-class of $T_{n+1}$. The only $\mathscr{J}$-classes of $T_{n+1}$ containing $S_{n}$ are the group of units and the $\mathscr{J}$-class $J_{n}$ of elements of rank $n$. Clearly there is no embedding of $S_{n} \times RB(2,2)$ into the group of units. We claim that there is no subsemigroup of $T_{n+1}$ isomorphic to $RB(2,2)$ contained in $J_{n}$ .

The $\mathscr{L}$-classes of $J_{n}$ are indexed by subsets of $\{1,\ldots, n+1\}$ of cardinality $n$. We write $\overline{i}$ for the subset $\{1, \ldots, n+1\} - \{i\}$. The
$\mathscr{R}$-classes of $J_n$ are indexed by partitions of  $\{1,\ldots, n+1\}$ into $n$ parts. Thus the partitions have $n-1$ singleton classes and a unique class of size 2. We write $\pi_{ij}$ for the partition whose unique non-singleton class is $\{i,j\}$ for $1 \leq i < j \leq n+1$. The $\mathscr{R}$-class indexed by
$\pi_{ij}$ has precisely $2$ idempotents $e_{ij}$ and $f_{ij}$: $e_{ij}$ sends $\{i,j\}$ to $i$ and the singleton classes to themselves and thus has image $\overline{j}$; and $f_{ij}$ sends $\{i,j\}$ to $j$ and the singleton classes to themselves and thus has image $\overline{i}$. Now consider a distinct $\mathscr{R}$-class
indexed by $\pi_{kl}$. Without loss of generality we can assume that $i$ does not belong to $\{k,l\}$. Then the $\mathscr{H}$-class indexed by
$(\pi_{ij},\overline{i})$ has the idempotent $f _{ij}$, but the $\mathscr{L}$-equivalent $\mathscr{H}$-class indexed by $(\pi_{kl},\overline{i})$ has no idempotent
since $\{k,l\} \subseteq \overline{i}$ as $i$ does not belong to $\{k,l\}$.  Therefore there is no subsemigroup of $J_n$ isomorphic to $RB(2,2)$. It follows that
$\mu(S_{n} \times RB(2,2) \geq n+2$ and by the above, we get $\mu(S_{n} \times RB(2,2)) = n+2$ as claimed.
\end{proof}

A slight modification of this proof shows that $\mu(G \times RB(2,2))=\mu(G)+2$ for any non-trivial group $G$. We note that the minimal embedding obtained in the previous theorem does not come from  a semisimple action of $S_{n} \times RB(2,2)$, and, in fact, $S_n\times RB(2,2)$ has no faithful semisimple action as any two $\mathscr L$-equivalent idempotents are identified under $\equiv_{\RM}$. This contrasts to the case of Rhodes semisimple semigroups where every minimal degree of a faithful representation is afforded by a semisimple action.

\section{On the left minimal degree of a semigroup}

 In this section we look at representations of a semigroup by partial and total functions acting on the left.   It is often convenient to regard left actions of a semigroup $S$ as right actions by the opposite semigroup $S^{op}$. We have defined
$\lambda(S) = \mu(S^{op})$ and $l(S)=m(S^{op})$ as the minimal degrees of a left action of $S$ by total and partial functions respectively. Since we know that an inverse semigroup (so in particular for a group) $S$ is isomorphic to $S^{op}$ by the inversion map sending an element $s \in S$ to $s^{-1}$, it follows that
$\lambda(S) = \mu(S)$ and $l(S)=m(S)$. We show that the situation for semigroups is very different. We first provide a well-known  upper bound.

The monoid $PT_n^{op}$ of all partial transformations of $[n]$ acting on the left has an  action  by partial functions on the right of $2^{[n]} -\{\emptyset\}$ of non-empty subsets of $[n]$ by inverse image. More precisely, if $f \in PT_{n}^{op}$ and $X \subseteq [n]$ is a non-empty set, define $Xf = f^{-1}(X)$ if this is non-empty and undefined otherwise. It is clear that this is a right action by partial functions and we claim that it is faithful. For if $f,g \in PT_n^{op}$ and $Xf = Xg$ for all non-empty subsets $X$, then in particular, $\{i\}f=f^{-1}(i)=g^{-1}(i)=\{i\}g$ for all $i \in [n]$. That is $f$ and $g$ have the same domain and the same fibers
over elements in their ranges and thus $f=g$.

Moreover the monoid $PT_n^{op}$ of all partial transformations of $[n]$  satisfies the hypotheses of Corollary~\ref{c:aggm} with $J$ the $\J$-class of rank $1$ partial maps.  In this case, there are $2^n-1$ $\mathscr L$-classes corresponding to the possible domains of a rank $1$ partial mapping and $n$ $\mathscr{R}$-classes corresponding to their ranges. So the minimal faithful degree of $PT_n^{op}$ by partial mappings is $2^n-1$ and the minimal degree by total mappings is $2^n$. We see that this is the worst behavior possible and give a generalization.

\begin{Thm}
Let $S$ be a semigroup. Then $l(S) \leq 2^{m(S)}-1$ and $\lambda(S) \leq 2^{\mu(S)}$. Furthermore, for every $k$ with $n \leq k \leq 2^{n}-1$, there is a semigroup $S$
with $m(S) = n$ and $l(S) = k$ and for every $k$ with $n \leq k \leq 2^{n}$ there is a semigroup with $\lambda(S) =n$ and $\mu(S) = k$.
\end{Thm}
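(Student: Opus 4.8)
\emph{The upper bounds.} The plan for the two inequalities is to reuse the faithful inverse-image actions constructed in the paragraph preceding the statement. Fix a faithful representation of $S$ by partial transformations of degree $m(S)$, that is, an embedding $S\hookrightarrow PT_{m(S)}$; dualizing gives an embedding $S^{op}\hookrightarrow PT_{m(S)}^{op}$. Composing with the faithful right action of $PT_{m(S)}^{op}$ by inverse image on the $2^{m(S)}-1$ nonempty subsets of $[m(S)]$ yields a faithful action of $S^{op}$ by partial transformations on $2^{m(S)}-1$ points, whence $l(S)=m(S^{op})\le 2^{m(S)}-1$. The inequality $\lambda(S)\le 2^{\mu(S)}$ is proved identically, using an embedding $S^{op}\hookrightarrow T_{\mu(S)}^{op}$ together with the faithful \emph{total} action of $T_{\mu(S)}^{op}$ by inverse image on all $2^{\mu(S)}$ subsets of $[\mu(S)]$, the empty set now being an allowed point acting as a sink.

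\emph{The partial realization.} Here I would produce the examples as aperiodic $0$-simple (i.e.\ $\mathsf{AGGM}$) Rees matrix semigroups encoded by Boolean matrices. For a regular $0/1$ matrix $C$ indexed by $B\times A$, set $S_C=\mathscr M^0(\{1\},A,B,C)$. A short outer-product computation shows that for $s=(a,b)$, $t=(a',b')\in J$ one has $s\equiv_J t$ if and only if column $a$ equals column $a'$ and row $b$ equals row $b'$; hence $\equiv_J$ is the equality relation exactly when $C$ has pairwise distinct nonzero rows and pairwise distinct nonzero columns. For such $C$, $S_C$ is $\mathsf{AGGM}$, so Corollary~\ref{c:aggm} gives $m(S_C)=|B|$, and applying the same corollary to $S_C^{op}\cong \mathscr M^0(\{1\},B,A,C^{\mathsf T})$ (which satisfies the same hypotheses) gives $l(S_C)=m(S_C^{op})=|A|$. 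Finally, for $n\le k\le 2^n-1$ I would take $B=[n]$, $A=[k]$ and let the $k$ columns of $C$ be the $n$ standard basis vectors of $\{0,1\}^n$ together with any $k-n$ further distinct nonzero vectors (possible since $k-n\le 2^n-1-n$). The basis vectors force the rows to be distinct and nonzero, the columns are distinct and nonzero by construction, so $m(S_C)=n$ and $l(S_C)=k$, as required.

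\emph{The total realization.} Replacing $S$ by $S^{op}$ turns the claim into: for each $k$ with $n\le k\le 2^n$ there is a semigroup with $\mu(S)=n$ and $\lambda(S)=\mu(S^{op})=k$. The $\mathsf{AGGM}$ examples above cannot suffice here: any semigroup with a zero satisfies $\mu=m+1$ and $\lambda=l+1$ by the Remark on semigroups with a zero above, so those examples only reach $\lambda\le 2^{\mu-1}$, missing the top half of the interval. The genuine source of large $\mu$–$\lambda$ asymmetry is \emph{zero-free} semigroups: already the $m$-element right-zero semigroup has $\mu=m$ (realized by its $m$ constant maps) while its opposite left-zero semigroup is faithful on far fewer points, and at the extreme $\mu(T_n^{op})=\lambda(T_n)=2^n$ together with $\lambda(T_n^{op})=n$ realizes $k=2^n$. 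I would therefore obtain the intermediate values from transformation monoids $M\le T_n$ acting faithfully and minimally on $[n]$ (so $\mu(M)=n$) whose opposite needs exactly $k$ points ($\mu(M^{op})=k$); passing to $S=M^{op}$ then gives $\mu(S)=k$ and $\lambda(S)=n$. This is precisely the automaton-reversal phenomenon advertised in the introduction: $M$ is the transition monoid of an $n$-state automaton whose determinized reverse has exactly $k$ reachable, pairwise inequivalent states, and one sweeps $k$ across the whole interval $[n,2^n]$ through a one-parameter family of such automata.

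\emph{Main obstacle.} The routine parts are the two upper bounds and the partial realization, where Corollary~\ref{c:aggm} does essentially all the work once the row/column criterion for $\equiv_J$ is recorded. The hard part is the total realization, and specifically its \emph{lower} bounds: one must check that the exhibited actions are genuinely minimal, that is, that $\mu(M)$ cannot drop below $n$ and $\mu(M^{op})$ cannot drop below $k$, and that a witness exists for \emph{every} integer $k$ in $[n,2^n]$ with no gaps. I expect this to require either a direct Myhill--Nerode/atom count on the reverse of the chosen automata, or an application of the $\RM$-irreducibility analysis of Theorem~\ref{t:min.degree} to the transition monoids $M$ and $M^{op}$, so as to pin $\mu(M)=n$ and $\mu(M^{op})=k$ exactly.
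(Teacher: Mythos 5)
Your upper bounds and your realization of the pairs $(m(S),l(S))=(n,k)$ are correct and essentially identical to the paper's proof: the bounds come from the faithful inverse-image actions of $PT_{m(S)}^{op}$ and $T_{\mu(S)}^{op}$ on (nonempty) subsets, and your semigroups $S_C=\mathscr M^0(\{1\},[k],[n],C)$ with the standard basis vectors among the columns of $C$ are exactly the paper's matrices $C=[I_n\mid X]$, justified the same way (Theorem~\ref{t:proportional} plus Corollary~\ref{c:aggm}, applied to $S_C$ and to $S_C^{op}$).

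The genuine gap is the total-degree realization. For $2^{n-1}<k<2^n$ you never exhibit a semigroup with $\lambda(S)=n$ and $\mu(S)=k$: what you offer is a program (transition monoids $M\le T_n$ whose determinized reverse has exactly $k$ states), with the decisive steps explicitly deferred (``I expect this to require\dots''). Moreover, two ingredients you lean on are themselves problematic. The endpoint fact $\lambda(T_n)=2^n$ is not a quotable triviality; it is Theorem~\ref{t:tnop} of this paper, proved later by a separate embedding argument. And your fallback of applying the $\RM$-irreducibility analysis of Theorem~\ref{t:min.degree} to $M$ and $M^{op}$ cannot work as stated, since that theorem requires Rhodes semisimplicity, while $T_n$ and $T_n^{op}$ are not Rhodes semisimple (the paper notes this), nor, typically, are the zero-free monoids needed here. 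A further warning that exact minimality claims in this regime are delicate: your assertion that the $m$-element right-zero semigroup has $\mu=m$ is false for $m\ge 6$; for instance $RZ_6$ acts faithfully on $5$ points, using two invariant blocks of sizes $2$ and $3$ with each element acting as a constant on each block.

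That said, your negative observation is correct and is the most valuable part of the proposal: since a semigroup with zero has $\mu=m+1$ and $\lambda=l+1$, such semigroups satisfy $\mu\le 2^{\lambda-1}$, so the $\mathsf{AGGM}$ examples can only realize $n\le k\le 2^{n-1}$ in the total case. This pinpoints a real weakness in the paper's own treatment, which proves only the $(m,l)$ case and asserts that the $(\lambda,\mu)$ case ``follows easily from the proof'' --- by your argument it cannot, for the upper half $2^{n-1}<k\le 2^n$ of the claimed range, where zero-free examples are unavoidable. So you have correctly located the hard part of the statement, but locating it is not proving it: as a proof, your proposal is incomplete at exactly this point.
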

\begin{proof}
The first part of the Theorem follows from our discussion about $PT_{n}^{op}$ above. We will prove the second part of the Theorem for the case of $m(S)$ and $l(S)$. The proof for $\lambda(S)$ and $\mu(S)$ follows easily from the proof.

Pick a set $\{X_{1},\ldots, X_{k-n}\}$ of $k-n$ distinct non-empty subsets of $[n]$ all of cardinality at least 2. Let $X$ be the $n \times (k-n)$ matrix over $\{0,1\}$
whose $i^{th}$ column is the characteristic vector of $X_{i}$, for $i=1,2, \ldots, k-n$. Let $C$ be the $n \times k$ matrix $=[I_{n}|X]$, where $I_{n}$ is the identity matrix. Then the presence of the identity matrix ensures that all rows of $C$ are non-zero and distinct rows are not equal. The columns of $C$ are all distinct by construction. It follows from Theorem~\ref{t:proportional} that $\mathscr{M}^{0}(1,[k],[n],C)$ is an $\mathsf{AGGM}$ semigroup and thus Corollary~\ref{c:aggm}
implies that $m(S)=n$ and $l(S)=k$ as claimed.
\end{proof}

Let $T_n$ be the full transformation monoid acting on the right of $[n]=\{1,\ldots, n\}$ with $n\geq 2$. Note that $T_{n}$ is an $\RM$ semigroup, since it acts faithfully on the right of its minimal ideal of constant maps. Similarly, $T_{n}^{op}$ is an $\LM$ semigroup. Neither of these are $\GGM$ semigroups by Theorem~\ref{t:proportional} and thus are not Rhodes semisimple. One can compute that the congruence $\equiv_{\GGM}$ is the Rees quotient identifying all constant maps to a single point in both cases. Thus the previous results on the left minimal degree do not apply. Nonetheless, we prove that $\lambda(T_{n})$ = $2^{n}$, as for the monoid of partial
functions. That is, one can not embed $T_{n}^{op}$ into $T_{m}$ if $m < 2^{n}$.

We use the inverse action as in the case of $PT_{n}^{op}$, but retaining  the empty set, which is a sink state, this time.   So we get a $T_{n}^{op}$-set with underlying set $2^{[n]}$. As above this is a faithful $S$-set. It will be convenient to view the action in an equivalent way.  Let $e\in T_n^{op}$ be an idempotent with image $\{1,2\}$ and let $c$ be the constant map to $1$ and $d$ the constant map to $2$.  Note that $eT_n^{op}$ consists of those mappings with image contained in $\{1,2\}$.  There is a bijection between $2^{[n]}$ and $eT_n^{op}$ taking a subset $A$ to the unique mapping $f_A$ sending $A$ to $1$ and $[n]\setminus A$ to $2$.  Moreover, if $g\in T_n^{op}$, then $f_Ag = f_{g\inv (A)}$.  Thus the right action of $T_n^{op}$ on the right ideal $eT_n^{op}$ is isomorphic to the action on $2^{[n]}$ by inverse images.

\begin{Thm}\label{t:tnop}
Suppose $n\geq 2$.
Let $X$ be a faithful $T_n^{op}$-set.  Then it contains an invariant subset isomorphic to a  copy of $T_n^{op}$ acting on $2^{[n]}$.  Thus the minimal degree of a faithful action of $T_n^{op}$ by total transformations is $2^n$.
\end{Thm}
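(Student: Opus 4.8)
The plan is to realize the claimed invariant subset as a single orbit inside $X$. Write $S=T_n^{op}$ and keep the idempotent $e$ of image $\{1,2\}$ from the paragraph preceding the statement, so that the principal right ideal $M:=eS$ is the set of all maps into $\{1,2\}$ and, as a right $S$-set, is isomorphic to $2^{[n]}$ via $A\mapsto f_A$ with action $Ag=g^{-1}(A)$. The elementary observation that drives everything is that for $y\in Xe$ (so $ye=y$) one has $ys=(ye)s=y(es)$, and $es$ ranges over all of $M$ as $s$ ranges over $S$; hence $M\to yS$, $m\mapsto ym$, is a surjective $S$-equivariant map, so the orbit $yS$ is a \emph{quotient} $S$-set of $M$. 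Thus it suffices to produce one $y\in Xe$ for which this surjection is injective: then $yS\cong M\cong 2^{[n]}$ is an invariant subset of the required form, giving $|X|\geq 2^n$, and since the inverse-image action on $2^{[n]}$ is a faithful total action of degree $2^n$, the minimal degree equals $2^n$.

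To locate such a $y$, attach to each $y\in Xe$ the kernel congruence $\theta_y=\{(m,m')\mid ym=ym'\}$ of that surjection, a congruence on $M$. First I would record that faithfulness forces $\bigcap_{y\in Xe}\theta_y$ to be the equality relation $\Delta$. Indeed, for $A\neq B$ the elements $f_A\neq f_B$ of $S$ are separated by some $x\in X$; since $f_A=ef_A\in M$, we have $xf_A=(xe)f_A$, so in fact $xe\in Xe$ already separates $f_A$ and $f_B$. Hence no nondiagonal pair of $M$ lies in every $\theta_y$.

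The heart of the argument --- and the step I expect to be the main obstacle --- is to prove that $M\cong 2^{[n]}$ is \emph{subdirectly irreducible} as a right $S$-set, with unique minimal nontrivial congruence $\rho_0$ identifying the two global fixed points $\emptyset$ and $[n]$ (both are fixed, as $g^{-1}(\emptyset)=\emptyset$ and $g^{-1}([n])=[n]$ for every $g$). That $\rho_0$ is a congruence is immediate. For minimality, suppose $\theta$ is any congruence with $A\sim_\theta B$ for some $A\neq B$; choosing $i$ in the symmetric difference, say $i\in A\setminus B$, and acting by the constant map $c_i$ to $i$ gives $Ac_i=c_i^{-1}(A)=[n]$ and $Bc_i=c_i^{-1}(B)=\emptyset$, whence $[n]\sim_\theta\emptyset$ and $\rho_0\subseteq\theta$. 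Thus every nontrivial congruence on $M$ contains $\rho_0$, which is exactly subdirect irreducibility.

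Finally I would combine the two facts. If every $\theta_y$ were nontrivial, each would contain $\rho_0$, so $\bigcap_{y\in Xe}\theta_y\supseteq\rho_0\neq\Delta$, contradicting the faithfulness computation. Hence some $\theta_y=\Delta$, the corresponding orbit map $M\to yS$ is an isomorphism, and $yS$ is the desired invariant copy of $2^{[n]}$; the degree statement follows as explained in the first paragraph. The only genuinely nonroutine input is the subdirect irreducibility of $2^{[n]}$, i.e. the combinatorial fact that any identification of two distinct subsets can be pushed, by a well-chosen constant map, down to the identification of $\emptyset$ with $[n]$; everything else is bookkeeping with the $S$-set axioms.
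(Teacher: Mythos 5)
Your proof is correct and is essentially the paper's own argument in different packaging: the decisive step in both is that right multiplication by a constant map to a point $i$ where two elements of $eT_n^{op}\cong 2^{[n]}$ differ collapses them to the pair $\{c,d\}=\{f_{[n]},f_{\emptyset}\}$. The paper applies faithfulness once, to the single pair $c\neq d$, to get a point $x$ with $xc\neq xd$ and then checks directly that $g\mapsto xg$ is injective on $eT_n^{op}$, whereas you state the same collapsing argument as subdirect irreducibility of the $S$-set $2^{[n]}$ (unique minimal nontrivial congruence $\rho_0$ gluing $\emptyset$ to $[n]$) and combine it with triviality of $\bigcap_{y\in Xe}\theta_y$; these are contrapositive formulations of one and the same lemma, so the two proofs coincide in substance.
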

\begin{proof}
Suppose now that $X$ is a faithful right $T_n^{op}$-set.  Then we show that there is a $T_n^{op}$-equivariant embedding of $eT_n^{op}$ into $X$.  It will follow that $eT_n^{op}$ is the unique minimal faithful right $T_n^{op}$-set up to isomorphism.  Indeed, let $x\in X$ such that $xc\neq xd$.   Then there is a $T_n^{op}$-equivariant mapping $\sigma\colon T_n^{op}\to X$ given by $\sigma(g) = xg$.  We claim that $\sigma|_{eT_n^{op}}$ is injective.  For suppose $f,g\in eT_n^{op}$ with $f\neq g$.  Then, without loss of generality, we can find $i$ with $f(i)=1$ and $g(i)=2$.  Then if $h$ is the constant map with image $i$, we have $fh=c$ and $gh=d$.  Thus $xfh=xc\neq xd=xgh$, and so we have $xf\neq xg$.  Thus $\sigma$ is injective.
\end{proof}

The astute reader will note that the proof of Theorem~\ref{t:tnop} applies equally well to any subsemigroup of $T_n^{op}$ containing $eT_n^{op}$ for some rank $2$ idempotent $e$.

Theorem~\ref{t:tnop} implies the well-known result~\cite{SalomaaReverse} that if $\mathscr A$ is a minimal deterministic $n$-state automaton whose transition monoid is $T_n$, then the minimum size of an automaton accepting the reversal of the language of $\mathscr A$ is $2^{[n]}$. First note that $T_n$ is the syntactic monoid of $L(\mathscr A)$ and hence $T_n^{op}$ is the syntactic monoid of $L(\mathscr A)^{op}$.  Thus the minimal automaton of $L(\mathscr A)^{op}$ has at least $2^n$ states by Theorem~\ref{t:sn.case}.  But the standard construction of an automaton recognizing $L(\mathscr A)^{op}$ is to reverse the edges of $\mathscr A$ and perform the subset construction.  Then the corresponding transformation monoid is then exactly $T_n^{op}$ acting on $2^{[n]}$.

Note that the minimal degree of a faithful action of $T_n^{op}$ by partial transformations acting on the right is $2^{[n]}-1$.  Indeed, the empty set is a sink for the right action and so if we remove it we get a faithful action by partial mappings of degree $2^n-1$.  On the other hand, if we have a faithful action by partial mappings, then adding a sink creates a faithful action by total mappings which must contain at least $2^n$ points by the above.  Therefore, the action must have degree at least $2^n-1$.

\appendix

\section{Semilocal theory and semigroups with faithful completely reducible representations} \label{s:appendix}

The local theory of finite semigroups studies the structure of $\mathscr{J}$-classes via the Rees Theorem and the various partial orders arising from Green's relations. The global theory of finite semigroups studies decompositions of semigroups via various products (direct, semidirect, wreath, block) and division. The archetypical result is the Krohn-Rhodes Theorem. Semilocal theory is concerned with how elements act on the right and left of $\mathscr{J}$-classes. These are the Sch\"utzenberger and related representations associated to $\mathscr{J}$-classes, their interconnections and their relationship to the representation theory of semigroups by (partial) functions and by matrices over a field. The purpose of this appendix is to summarize those parts of the semilocal theory used in this paper for the convenience of the reader. We don't include any proofs, which are readily available in Chapters~7 and~8 of~\cite{Arbib} and in a more modern version in Section~4.6 of~\cite{qtheor}.

A semigroup $S$ is called \emph{right (left) mapping}~\cite{KRannals,qtheor,Arbib} if it contains a ($0$-)minimal ideal $I$ such that
$S$ acts faithfully on the right (left) of $I$. One calls $S$ a \emph{generalized group mapping semigroup} if it is both left mapping and right mapping, and $S$ is called a \emph{group mapping semigroup} if it is generalized group mapping semigroup and $I$ contains a non-trivial group. We often  write that $S$ is an $\RM$ ($\mathsf{LM},\GGM,\mathsf{GM}$) semigroup if it is a right (left, generalized group, group) mapping semigroup.

Let $R$ ($L$) be an $\mathscr{R}$-class ($\mathscr{L}$-class) of $S$. The right Sch\"utzenberger representation on $R$ is the partial $S$-set $R$ ($L$) with action given by $r\cdot s = rs$ if $rs \in R$ and undefined otherwise. The left Sch\"utzenberger representation on $L$ is the
partial left set $S$-set $L$ with action given by $s\cdot l  = sl$ if $sl \in L$ and undefined otherwise.

We gather some basic results about these concepts.
\begin{Thm}
Let $S$ be a finite semigroup.
\begin{enumerate}
    \item If $S$ is $\RM$ or $\mathsf{LM}$, then it has a unique (0-)minimal ideal $I$ that is necessarily regular.
	
	\item Let $R,R'$ be $\mathscr{R}$-classes contained in the same $\mathscr{J}$-class $J$ of $S$. Then the right Sch\"utzenberger
	representation on $R$ is isomorphic to that of $R'$. The dual result holds for a pair of $\mathscr{L}$-classes contained in $J$.
	
	\item $S$ ($S^{op}$) has a faithful transitive action by partial mappings if and only if $S$ is $\RM$ ($\mathsf{LM}$). In this case, the right (left) Sch\"utzenberger representation is a faithful transitive partial $S$-set ($S^{op}$-set) for any $\mathscr{R}$-class $(\mathscr{L}$-class) contained in $I$.
\end{enumerate}
\end{Thm}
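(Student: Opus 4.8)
The plan is to handle the three parts in turn, obtaining each left-mapping or $S^{op}$ assertion as the formal left--right dual of its right-handed counterpart (apply the right-handed statement to $S^{op}$, noting that the ideals of $S$ and of $S^{op}$ coincide). Throughout I would lean on stability of finite semigroups (if $a\le_{\mathscr J} b$ and $a\J b$ then $a\R b$ and $a\eL b$) and on the standard structural facts that a finite $0$-simple ideal is completely $0$-simple and hence regular, while a finite semigroup with no zero has a completely simple, hence regular, kernel. For part (1) the existence of a $(0\text{-})$minimal ideal is either automatic (the kernel, when $S$ has no zero) or built into the definition of $\RM$, so the real content is uniqueness and regularity, and both I would extract from faithfulness of the right action on the distinguished ideal $I$. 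If $I'$ were a second $0$-minimal ideal distinct from $I$, then $I\cap I'$ is an ideal inside each and hence equals $\{0\}$, so $II'\subseteq I\cap I'=\{0\}$; thus every $y\in I'$ sends all of $I$ to $0$ under the right action, acting exactly as the zero element $0$ does, and faithfulness forces $y=0$, contradicting $I'\supsetneq\{0\}$. The same device rules out a null ideal: if $I^2=\{0\}$ then every element of $I$ acts as $0$ on the right of $I$, again impossible, so $I$ is $0$-simple and therefore regular.

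For part (2) I would invoke the $\mathscr L$-dual of Green's Lemma. Given $\mathscr R$-classes $R,R'$ in one $\mathscr J$-class, pick $z\in R$ with $z\eL r'$ for some $r'\in R'$ (possible since $\mathscr J=\mathscr D$), and choose $u,v\in S^1$ with $r'=uz$ and $z=vr'$; then left translation $\lambda_u\colon x\mapsto ux$ is a bijection $R\to R'$ with inverse $\lambda_v$, and $vu$ fixes $R$ pointwise. The only thing to check is that $\lambda_u$ intertwines the right Sch\"utzenberger actions, i.e.\ that for $x\in R$ and $s\in S$ one has $xs\in R\iff uxs\in R'$, the equality $u(xs)=(ux)s$ being automatic. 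One implication is bijectivity of $\lambda_u$; for the converse, if $uxs\in R'$ then $\lambda_v$ carries it back into $R$, and $vuxs=xs$ because $vu$ fixes $R$. Hence the two representations are isomorphic, and the $\mathscr L$-class statement is the dual.

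Part (3) is the crux. For the forward direction, let $I$ be the distinguished $(0\text{-})$minimal ideal and $R$ an $\mathscr R$-class of $I$. Transitivity is immediate: $R$ is a single strong orbit, and it is not null because the idempotent of $R$, present since $I$ is regular, satisfies $e\cdot e=e\in R$. For faithfulness I would observe that, by stability, for $x\in R$ and $s\in S$ the product $xs$ is either $0$ or again $\mathscr R$-equivalent to $x$, so the right action of $S$ on $I$ restricts on each $\mathscr R$-class to precisely its right Sch\"utzenberger representation (with $0$ as sink). Faithfulness on $I$ thus means some $\mathscr R$-class separates any given $s\neq t$; but by part (2) all these representations are isomorphic as $S$-sets, so a single $R$ already separates $s$ and $t$, which is exactly faithfulness on $R$. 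For the converse I would feed a faithful transitive partial $S$-set $\Omega$ into Lemma~\ref{l:apex}: it supplies a regular apex $J$ with the property that $s\equiv_{\RM,J}t$ implies $s,t$ act alike on $\Omega$, so faithfulness forces $\equiv_{\RM,J}$ to be the equality relation. Finally I would argue this makes $J$ the $(0\text{-})$minimal ideal: any element $s$ lying strictly $\mathscr J$-below $J$ annihilates $R_J$ (since $xs\le_{\mathscr J}s<_{\mathscr J}J$ for $x\in R_J$), so any two such elements are $\equiv_{\RM,J}$-equivalent; triviality of $\equiv_{\RM,J}$ then permits at most one element below $J$, which must be a zero, whence $J$ together with that zero is the $(0\text{-})$minimal ideal and the faithful action on $R_J\subseteq I$ witnesses that $S$ is $\RM$.

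The main obstacle I anticipate is the faithfulness transfer in the forward half of part (3): passing from faithfulness on the whole ideal $I$ to faithfulness on a single Sch\"utzenberger representation is not formal and genuinely requires both the stability argument that the action restricts $\mathscr R$-class by $\mathscr R$-class and the isomorphism of all those classes supplied by part (2). The converse is nearly immediate once Lemma~\ref{l:apex} is in hand, the only subtlety being the verification that the apex is forced to be $(0\text{-})$minimal rather than an arbitrary regular $\mathscr J$-class.
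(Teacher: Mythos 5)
The paper does not prove this theorem: it is stated in Appendix~\ref{s:appendix} as background from the semilocal theory, with proofs explicitly deferred to Chapters~7--8 of \cite{Arbib} and Section~4.6 of \cite{qtheor}. So there is no in-paper argument to compare against; judged on its own, your proof is correct, and it follows the same standard route as those references: faithfulness of the action on the distinguished ideal for part (1), the $\mathscr L$-dual of Green's Lemma for part (2), and for part (3) the decomposition of the action on $I$ into Sch\"utzenberger actions (via stability) combined with part (2) in one direction, and an apex argument in the other. Your appeal to Lemma~\ref{l:apex} is legitimate and non-circular, since that lemma is proved self-contained in Section~\ref{s:rss}, independently of the appendix.

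Two small points deserve tightening, though neither is a genuine gap. First, in the converse half of part (3), elements whose $\mathscr J$-class is \emph{incomparable} to $J$ also annihilate $R_J$, so the correct count from triviality of $\equiv_{\RM,J}$ is ``at most one element whose $\mathscr J$-class is not above or equal to $J$,'' not merely ``at most one element strictly below $J$.'' This costs you nothing: since any product $xs$ or $sx$ with $x\in J$ lies $\mathscr J$-below or in $J$, your argument that $J$ (or $J$ together with the unique zero below it) is a $(0\text{-})$minimal ideal is unaffected, and one can check that an incomparable element cannot in fact exist once $J\cup\{z\}$ is known to be an ideal. Second, the definition of $\RM$ requires faithfulness of the action on $I$ itself, not on $R_J$; this follows in one line, since two elements acting identically on $I$ act identically on the partial $S$-set $R_J$ (agreement in $I$ preserves both membership in $R_J$ and values there), so triviality of $\equiv_{\RM,J}$ gives faithfulness on $I$. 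With these glosses your argument is complete.
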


Let $J$ be a regular $\mathscr{J}$-class of $S$. We connect the Sch\"utzenberger representations to the two congruences $\equiv_J$ and $\equiv_{\RM,J}$ introduced in Section~\ref{s:rss}.  The semigroup $S$ acts on the right (left) of $J$ by partial functions. Let $\RM_{J}(S)$ ($\mathsf{LM}_{J}(S)$) be the image of $S$ under the corresponding homomorphism to $PT_J$ ($PT_J^{op}$). We have the canonical surjective homomorphisms
$\rho_{J}\colon S \rightarrow \RM_{J}(S)$ ($\lambda_{J}\colon S \rightarrow \mathsf{LM}_{J}(S)$).% and the partial transformation semigroups $(J,\RM_{J}(S))$
% ($(\mathsf{LM}_{J}(S),J)$).

We state the following Theorem for $\RM_{J}$. The dual results hold for $\mathsf{LM}_{J}$.

\begin{Thm} Let $S$ be a semigroup and $J$ a regular $\mathscr{J}$-class of $S$.
\begin{enumerate}
\item For all $s,t \in S, \rho_J(s) = \rho_J(t)$ if and only if $s\equiv_{\RM,J}t$. Therefore the quotient semigroup $S/{\equiv_{\RM,J}}$ is isomorphic to $\RM_{J}(S)$

\item Let $R$ be an $\mathscr{R}$-class of $J$. Then for all $s,t \in S$ $s\equiv_{\RM,J}t$ if and only if the restriction of the action of $s$ and $t$ to $R$ define the same partial function on $R$. It follows that $RM_{J}(S)$ is isomorphic to the image of $S$ in the right Sch\"utzenberger representation of $S$ on $R$ for any $\mathscr{R}$-class $R$ of $J$.
\end{enumerate}
\end{Thm}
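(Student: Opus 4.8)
The plan is to treat part~(1) as an unwinding of definitions and to concentrate the real work on part~(2), where I would transport information between $\mathscr R$-classes using the isomorphism of Schützenberger representations already recorded above. First I would check that $\rho_J$ is a genuine homomorphism $S\to PT_J$. The only point needing care is that the action of a product $st$ on $J$ is the composite of the actions of $s$ and $t$: for $x\in J$ with $xst\in J$ the chain $xst\leq_{\mathscr J} xs\leq_{\mathscr J} x$ together with $x,xst\in J$ forces $xs\in J$, so the intermediate value $xs$ at which $t$ must act already lies in $J$. Granting this, $\rho_J(s)=\rho_J(t)$ says exactly that $s$ and $t$ induce the same partial map on $J$, i.e. that for every $x\in J$ one has $xs\in J\iff xt\in J$ and, when both hold, $xs=xt$. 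This is verbatim the definition of $s\equiv_{\RM,J}t$, so $\equiv_{\RM,J}$ is precisely the congruence $\ker\rho_J$, and the first isomorphism theorem yields $S/{\equiv_{\RM,J}}\cong \RM_J(S)$, settling part~(1).

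For part~(2) the crucial reduction is that the action of $S$ on $J$ via $\rho_J$, restricted to a single $\mathscr R$-class $R$ of $J$, coincides with the right Schützenberger representation on $R$. Indeed, for $r\in R$ one has $rs\leq_{\mathscr R} r$, so by stability $rs\in J$ forces $rs\R r$, that is $rs\in R$; conversely $rs\in R$ gives $rs\in J$. Hence for $r\in R$ the value $r\cdot s$ is defined (and equals $rs$) in the Schützenberger action exactly when it is defined in the $\rho_J$-action, so $s$ and $t$ define the same partial function on $R$ if and only if the partial maps $\rho_J(s)$ and $\rho_J(t)$ agree on $R$. I would then invoke the isomorphism, stated above, of the right Schützenberger representations on any two $\mathscr R$-classes $R,R'$ of $J$: an isomorphism $\phi\colon R\to R'$ of partial $S$-sets satisfies $\phi(r\cdot s)=\phi(r)\cdot s$, whence $s,t$ agree on $R$ if and only if they agree on $R'$. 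Since $J$ is the disjoint union of its $\mathscr R$-classes and $\rho_J$ restricts to the Schützenberger action on each of them, agreeing on one $\mathscr R$-class is equivalent to agreeing on all of them, hence to $\rho_J(s)=\rho_J(t)$, i.e. to $s\equiv_{\RM,J}t$. This is the asserted equivalence.

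Finally, writing $\sigma_R\colon S\to PT_R$ for the Schützenberger action on $R$, the previous paragraph gives $\sigma_R(s)=\sigma_R(t)\iff s\equiv_{\RM,J}t\iff\rho_J(s)=\rho_J(t)$, so $\ker\sigma_R=\ker\rho_J$ and the image of $S$ under $\sigma_R$ is isomorphic to $\RM_J(S)$, independently of the chosen $R$; this is the closing statement. I expect the only genuine subtlety to be bookkeeping of the ``possibly undefined'' equality convention together with the two stability arguments (for the homomorphism property of $\rho_J$ and for the identification $rs\in J\iff rs\in R$ when $r\in R$). Everything past those two points is a transport of structure along the already-established isomorphism of Schützenberger representations, so I do not anticipate any serious obstacle beyond keeping the partiality conventions straight.
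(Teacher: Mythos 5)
Your proof is correct. Note, however, that the paper itself gives no proof of this statement: it appears in Appendix~\ref{s:appendix}, where the authors explicitly defer all proofs to Chapters~7--8 of \cite{Arbib} and Section~4.6 of \cite{qtheor}, so there is no in-paper argument to compare against. Your route is the standard one and all the key steps check out: the sandwiching argument $xst\leq_{\mathscr J}xs\leq_{\mathscr J}x$ correctly shows $\rho_J$ is a homomorphism (so part~(1) really is just unwinding $\ker\rho_J={\equiv_{\RM,J}}$ plus the first isomorphism theorem); stability correctly identifies the restriction of the $J$-action to each $\mathscr R$-class with the Sch\"utzenberger action ($rs\in J\iff rs\in R$ for $r\in R$); and transporting agreement of $s,t$ along an isomorphism of partial $S$-sets between two $\mathscr R$-classes (the preceding appendix theorem) legitimately upgrades agreement on one $\mathscr R$-class to agreement on all of $J$, which is what makes the ``some, equivalently any'' clause work. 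The only mild dependence worth flagging is that you lean on the unproved isomorphism statement from the first appendix theorem; since that result is recorded before the one you are proving, this is not circular, but a fully self-contained treatment would construct that isomorphism (via Green's lemma) as well.
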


Now we consider the congruence $\equiv_J$ for a regular $\mathscr{J}$-class of $S$. It is known that the image of $J$ under the morphism
$\rho_{J}$ ($\lambda_{J}$) is a $\mathscr{J}$-class $\rho_{J}(J)$ ($\lambda_{J}(J)$) of $\RM_{J}(S)$ ($\mathsf{LM}_{J}(S)$). It is known that
$\lambda_{\rho_J(J)}\circ \rho_J= \rho_{\lambda_{J}(J)}\circ \lambda_J$. We denote this common homomorphism by $\gamma_{J}$ and the common semigroup  $\mathsf{LM}_{\rho_{J}(J)}(\RM_{J}(S))= \RM_{\lambda_{J}(J)}(\mathsf{LM}_{J}(S))$ by $\GGM_{J}(S)$. We have the following theorem.

\begin{Thm}
The semigroup $\GGM_{J}(S)$ is a $\GGM$ semigroup. Consider $\gamma_{J}\colon S \rightarrow \GGM_{J}(S)$. Then for all $s,t \in S$ we have
$\gamma_{J}(s)=\gamma_{J}(t)$ if and only if $s\equiv_{J}t$. Therefore the quotient semigroup $S/{\equiv_{J}}$ is isomorphic to $\GGM_{J}(S)$.
\end{Thm}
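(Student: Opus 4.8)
The plan is to split the statement into two independent tasks: showing that $\GGM_J(S)$ is a generalized group mapping semigroup, and showing that $\ker\gamma_J$ is exactly $\equiv_J$. The claimed isomorphism $S/{\equiv_J}\cong\GGM_J(S)$ then follows at once, since $\gamma_J$ is the composite of the canonical surjections $\rho_J$ and $\lambda_{\rho_J(J)}$ and is therefore itself surjective. The $\GGM$ assertion I would dispatch formally from the displayed identity $\gamma_J=\lambda_{\rho_J(J)}\circ\rho_J=\rho_{\lambda_J(J)}\circ\lambda_J$. Reading $\GGM_J(S)$ as $\LM_{\rho_J(J)}(\RM_J(S))$ exhibits it as the image of a left Sch\"utzenberger action, hence as an $\LM$ semigroup with distinguished $\mathscr J$-class $\gamma_J(J)=\lambda_{\rho_J(J)}(\rho_J(J))$; reading the same semigroup as $\RM_{\lambda_J(J)}(\LM_J(S))$ exhibits it as an $\RM$ semigroup with distinguished $\mathscr J$-class $\rho_{\lambda_J(J)}(\lambda_J(J))$, which is the very same class $\gamma_J(J)$. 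A semigroup that is simultaneously $\RM$ and $\LM$ with respect to one $\mathscr J$-class is by definition $\GGM$, so this step costs nothing once the commutation identity is granted.

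The substance is the computation of $\ker\gamma_J$. Using the previous theorem ($\rho_J(s)=\rho_J(t)\iff s\equiv_{\RM,J}t$) and its left-handed dual ($\lambda_{\rho_J(J)}(a)=\lambda_{\rho_J(J)}(b)\iff a\equiv_{\LM,\rho_J(J)}b$ on $\RM_J(S)$), I would first rewrite $\gamma_J(s)=\gamma_J(t)$ as $\rho_J(s)\equiv_{\LM,\rho_J(J)}\rho_J(t)$, and then let the test elements of $\rho_J(J)$ run over $\rho_J(y)$ with $y\in J$. The pivotal observation is that for $y\in J$ one has $\rho_J(sy)\in\rho_J(J)\iff sy\in J$: indeed $sy\leq_{\mathscr J}J$ always, and if $sy\notin J$ then $sy<_{\mathscr J}J$, so $sy$ acts as the empty map in the right Sch\"utzenberger representation, whence $\rho_J(sy)$ is the zero of $\RM_J(S)$, which does not lie in $\rho_J(J)$ (no element of the regular class $J$ right-annihilates $J$). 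With this in hand, $\gamma_J(s)=\gamma_J(t)$ reduces to the intrinsic condition $(\star)$: for every $y\in J$ one has $sy\in J\iff ty\in J$, and whenever both lie in $J$ one has $sy\equiv_{\RM,J}ty$.

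It then remains to identify $(\star)$ with $\equiv_J$. For $\equiv_J\Rightarrow(\star)$, I would fix $y\in J$ with $sy\in J$ and pick an idempotent $e$ with $e\R sy$, so that $e\,sy=sy$ and $e\in J$; the defining condition of $\equiv_J$ with $x=e$ forces $ety\in J$, hence $ty\in J$, yielding the membership equivalence, while the same defining condition with arbitrary $x\in J$ is verbatim the statement $sy\equiv_{\RM,J}ty$. Conversely, for $(\star)\Rightarrow\equiv_J$, I would fix $x,y\in J$ and split on whether $sy\in J$: if it is, then $sy\equiv_{\RM,J}ty$ supplies both $xsy\in J\iff xty\in J$ and their equality when both are in $J$; if it is not, then $xsy,xty<_{\mathscr J}J$ lie outside $J$ and the requirement is vacuous.

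The hard part will be precisely this matching step. The congruence $\equiv_J$ is framed with a single two-sided test $xsy$ for $x,y\in J$, whereas the composite $\gamma_J$ manufactures a nested right-then-left test; the reconciliation hinges on the $\mathscr J$-order annihilation argument (to discard the cases where $sy$ drops below $J$) together with the idempotent trick that recovers the one-variable membership $sy\in J\iff ty\in J$ from the two-variable data. Once $\ker\gamma_J={\equiv_J}$ is secured, the surjectivity of $\gamma_J$ gives $S/{\equiv_J}\cong\GGM_J(S)$, completing the proof.
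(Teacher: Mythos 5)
The paper never proves this theorem: it sits in Appendix~\ref{s:appendix}, where the authors state explicitly that no proofs are included and defer to Chapters~7--8 of~\cite{Arbib} and Section~4.6 of~\cite{qtheor}. So there is no in-paper argument to compare against; judged on its own, your proof is correct and is essentially the standard semilocal-theory argument from those sources. The substance of your proof is sound: rewriting $\gamma_J(s)=\gamma_J(t)$ as $\rho_J(s)\equiv_{\LM,\rho_J(J)}\rho_J(t)$ via the preceding theorem and its dual is legitimate (note that $\rho_J(J)$ is a \emph{regular} $\mathscr J$-class, since homomorphic images of regular elements are regular); the pivotal observation $\rho_J(sy)\in\rho_J(J)\iff sy\in J$ is correctly justified (if $sy<_{\mathscr J}J$ then $x(sy)<_{\mathscr J}J$ for every $x\in J$, so $\rho_J(sy)$ is the empty map, while no $z\in J$ can map to the empty map because the idempotent $f$ with $f\mathrel{\mathscr R}z$ lies in $J$ and satisfies $fz=z$); and the two-way matching of your condition $(\star)$ with $\equiv_J$, using an idempotent $e$ with $e\mathrel{\mathscr R}sy$ in one direction and the vacuity of the test when $sy,ty<_{\mathscr J}J$ in the other, is exactly right.

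One point you dismiss too quickly is the $\GGM$ assertion. That $\LM_{\rho_J(J)}(\RM_J(S))$ is an $\LM$ semigroup is \emph{not} a matter of definition: being the image of a left action on a $\mathscr J$-class does not, on its face, mean the image acts faithfully on the left of a $(0\text{-})$minimal ideal \emph{of itself}. This is a genuine (if standard) lemma, but it is available from the appendix's own toolkit: by part (2) of the second theorem (in its left-handed form), $\GGM_J(S)$ acts faithfully by partial maps on an $\mathscr L$-class of $\gamma_J(J)$; this partial action is transitive because the class is regular; and part (3) of the first theorem then says the semigroup is $\LM$. Dually, the other reading gives $\RM$, and either your observation that both readings single out the same class $\gamma_J(J)$, or simply the uniqueness of the $(0\text{-})$minimal ideal from part (1) of the first theorem, completes the $\GGM$ claim. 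With that lemma spelled out, your proof is complete.
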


The ($0$-)minimal ideal $I$ of a $\RM$ ($\mathsf{LM}$,$\GGM$) semigroup is a ($0$-)simple semigroup, so is isomorphic to a Rees matrix semigroup
$I \cong \mathscr M^{0}(G,A,B,C)$ (with the appropriate convention if $I$ has no $0$). Here $G$ is a group, $A$ is a set indexing the $\mathscr{R}$-classes of $I$, $B$ is a set indexing the
$\mathscr{L}$-classes of $I$ and $C\colon B \times A \rightarrow G \cup \{0\}$ is the sandwich matrix. We write $C$ as a $B \times A$ matrix. As a set, $I = (A \times G \times B) \cup \{0\}$ with multiplication given by $(a,g,b)(a',g',b')=(a,gC_{ba'}g',b')$ if $C_{ba'} \in G$ and 0 otherwise. Every row and column of $C$ has a non-zero entry.

Two columns of $C_{\ast a},C_{\ast a'}$ of $C$ are right proportional if there is $g\in G$ such that  $C_{ba}g=C_{ba'}$ for all $b \in B$. There is the dual notion of a pair of left proportional rows of $C$. The next theorem can be proved by a direct calculation.

\begin{Thm}\label{t:proportional}
 Let $S$ be a semigroup with a unique ($0$-)minimal ideal isomorphic to $\mathscr{M}^{0}(G,A,B,C)$.
\begin{enumerate}
\item If $S$ is $\RM$, then no two columns of $C$ are right proportional.
\item If $S$ is $\mathsf{LM}$, then no two rows of $C$ are left proportional.
\item If $S$ is $\GGM$ then no two columns of $C$ are right proportional and no two rows of $C$ are left proportional.
\end{enumerate}
Moreover, if $S$ is ($0$-)simple, the converse to each of (1)--(3) holds.
\end{Thm}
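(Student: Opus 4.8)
The plan is to reduce both directions to a single cancellation computation for the right (and, dually, left) translation action in Rees coordinates, exploiting that $I\subseteq S$, so that faithfulness of the $S$-action on $I$ immediately forces faithfulness of the $I$-action on itself. Writing $I\cong \mathscr M^0(G,A,B,C)$, I would first record the action formula: for $s=(a,g,b)$ and $x=(a_0,g_0,b_0)$ in $I$ one has $xs=(a_0,g_0C_{b_0a}g,b)$ when $C_{b_0a}\neq 0$, and $xs=0$ otherwise. The computational heart is then the claim that two elements $s=(a,g,b)$ and $t=(a',g',b')$ define the same right translation of $I$ if and only if $b=b'$ and the columns $C_{\ast a}$, $C_{\ast a'}$ are right proportional with proportionality constant $g(g')\inv$. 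The forward direction of this claim comes from matching zero patterns and cancelling $g_0$ in the group $G$ from the identity $g_0C_{b_0a}g=g_0C_{b_0a'}g'$; the reverse direction is immediate. (One also notes that since every column of $C$ has a nonzero entry, no nonzero element of $I$ can act as the zero translation, which disposes of the cases involving $0$.)

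For statement (1), I would argue contrapositively: if $S$ is $\RM$ but two distinct columns $C_{\ast a}$, $C_{\ast a'}$ are right proportional, say $C_{ba}h=C_{ba'}$ for all $b\in B$, then fixing any $b_0\in B$ and setting $s=(a,h,b_0)$, $t=(a',1,b_0)$, the computation above shows that $s\neq t$ act identically on the right of $I$. Since $s,t\in I\subseteq S$, this contradicts the faithfulness of the $S$-action on $I$. Statement (2) is the exact left--right dual, obtained from $sx=(a,gC_{ba_0}g_0,b_0)$ by working with rows of $C$ and left proportionality in place of columns and right proportionality. Statement (3) is then simply the conjunction of (1) and (2), since $\GGM$ means both $\RM$ and $\mathsf{LM}$.

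For the converses I would invoke ($0$-)simplicity, which gives $S=I$, so that the $S$-action on $I$ is precisely the action of $I$ on itself. Assuming no two distinct columns of $C$ are right proportional, the key calculation forces any two elements acting identically on the right to share the same $\mathscr L$-coordinate $b=b'$ and to have proportional—hence, by hypothesis, equal—columns $a=a'$; cancellation in $G$ (using that the common column has a nonzero entry) then yields $g=g'$, so the two elements coincide. Thus the right action is faithful and $S$ is $\RM$. The dual argument, using left proportionality of rows, shows that the absence of left-proportional rows makes $S$ an $\mathsf{LM}$ semigroup, and combining the two gives $\GGM$.

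The main thing to get right is the bookkeeping rather than any conceptual obstacle: orienting the proportionality constant $g(g')\inv$ consistently, handling the element $0$ and the ``undefined $=$ undefined'' convention uniformly across all $\mathscr R$-classes, and using at the correct moments that every row and column of $C$ has a nonzero entry. I expect the orientation of the proportionality constant and its compatibility with the group coordinates to be the fiddliest point, but all of the content is contained in the one cancellation step, so the verification should be routine once the action formula is pinned down.
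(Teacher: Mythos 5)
Your proof is correct and takes essentially the same route as the paper: the paper gives no written proof, stating only that the theorem ``can be proved by a direct calculation'' (deferring to the cited references), and your Rees-coordinate computation---characterizing when two elements $(a,g,b)$, $(a',g',b')$ induce the same right translation via matching zero patterns and the cancellation $C_{ba}\,g(g')^{-1}=C_{ba'}$---is precisely that calculation, handled correctly in both directions, including the reduction to $S=I$ in the ($0$-)simple case.
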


The morphism $\RM_J$ identifies two $\mathscr{R}$-classes of $J$ whose columns in the sandwich matrix are right proportional and the dual result for $\mathsf{LM}_J$. The next theorem gives universal properties of the congruences $\equiv_{\RM}$, $\equiv_{\mathsf{LM}}$ and $\equiv_{\GGM}$.

\begin{Thm}
Let $S$ be a semigroup. Then $\equiv_{\RM}$, $\equiv_{\mathsf{LM}}$ and $\equiv_{\GGM}$ are, respectively, the largest congruences on $S$ that are injective on subgroups of $S$ and separate regular $\mathscr{L}$-classes, $\mathscr{R}$-classes and $\mathscr{J}$-classes, respectively.
\end{Thm}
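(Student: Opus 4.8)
The plan is to prove the statement for $\equiv_{\RM}$ in full; the claim for $\equiv_{\LM}$ is the left--right dual (the image of the $\equiv_{\RM}$ argument under the anti-isomorphism $S\to S^{op}$), and the claim for $\equiv_{\GGM}$ is the two-sided analogue obtained by running the same argument with the defining relation of $\equiv_J$, which involves products $xsy$ with $x,y\in J$, in place of that of $\equiv_{\RM,J}$. Throughout I would use two facts already available: that $S/{\equiv_{\RM,J}}\cong \RM_J(S)$, so that $s\equiv_{\RM,J}t$ exactly encodes equality of the right actions on an $\mathscr R$-class of $J$, together with stability of finite semigroups. Write $\phi\colon S\to S/{\equiv_{\RM}}$ for the quotient map; a congruence \emph{separates regular $\mathscr L$-classes} when distinct regular $\mathscr L$-classes have images lying in distinct $\mathscr L$-classes of the quotient, equivalently (passing to idempotent representatives) when $\phi(e)\eL\phi(f)$ forces $e\eL f$ for idempotents $e,f$.

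First I would check that $\equiv_{\RM}$ has the two asserted properties. For \emph{injectivity on subgroups}, if $g,h\in G_{e_J}$ satisfy $g\equiv_{\RM}h$, then in particular $g\equiv_{\RM,J}h$, and evaluating the defining condition at $x=e_J\in J$ (where $e_Jg=g\in J$ and $e_Jh=h\in J$) yields $g=e_Jg=e_Jh=h$. For \emph{separation of regular $\mathscr L$-classes}, let $e,f$ be idempotents with $\phi(e)\eL\phi(f)$; since $\mathscr L$-equivalent idempotents satisfy $\phi(e)\phi(f)=\phi(e)$ and $\phi(f)\phi(e)=\phi(f)$, we get $ef\equiv_{\RM}e$ and $fe\equiv_{\RM}f$. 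Evaluating $ef\equiv_{\RM,J_e}e$ at $x=e$ forces $ef=e$, and symmetrically $fe=f$; hence $e\eL f$, as required.

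For maximality, let $\equiv$ be any congruence that is injective on subgroups and separates regular $\mathscr L$-classes, with quotient map $\psi$, and suppose $s\equiv t$; I must show $s\equiv_{\RM}t$. If not, I would choose a regular $\mathscr J$-class $J$ and $x\in J$ witnessing $s\not\equiv_{\RM,J}t$, so that $xs\equiv xt$ while the pair $xs,xt$ violates the defining condition. There are two cases. If exactly one of $xs,xt$ lies in $J$, say $xs\in J$ and $xt\notin J$, then $xs$ is regular while $xt$ lies strictly $\mathscr J$-below $J$; multiplying $xs\equiv xt$ on the right by an inverse $z$ of $xs$ produces an idempotent $f=xsz\in J$ with $\psi(f)=\psi(xtz)$, and replacing $xtz$ by its idempotent power $g$ gives an idempotent $g$ strictly $\mathscr J$-below $J$ with $\psi(g)=\psi(f)$; as $f,g$ lie in distinct $\mathscr L$-classes this contradicts separation. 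If instead $xs,xt\in J$ with $xs\neq xt$, then $xs\mathrel{\mathscr R}x\mathrel{\mathscr R}xt$ by stability; when $xs$ and $xt$ are not $\mathscr L$-equivalent the same separation argument (via idempotents $\eL$-equivalent to $xs$ and to $xt$) applies, and when $xs\HH xt$ I would use Green's lemma to right-translate the $\mathscr H$-class of $xs$ bijectively onto a maximal subgroup, carrying $xs,xt$ to distinct group elements with equal $\psi$-image, contradicting injectivity on subgroups. Thus $s\equiv_{\RM}t$.

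The main obstacle is this last case: packaging the Green's-lemma translation so that $\mathscr H$-equivalent elements are moved into an honest maximal subgroup while simultaneously preserving distinctness and the congruence, which is precisely where the hypothesis ``injective on subgroups'' is consumed. For the $\equiv_{\GGM}$ statement the corresponding work is to establish separation of regular $\mathscr J$-classes and to rerun the maximality argument with two-sided products $xsy$, where the analogous delicate step is again the reduction of the ``same $\mathscr H$-class'' case to a maximal subgroup; the $\equiv_{\LM}$ statement requires no new ideas.
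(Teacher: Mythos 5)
The paper never proves this theorem: it sits in Appendix~\ref{s:appendix} as a summary of known semilocal theory, with proofs explicitly deferred to Chapters~7--8 of Arbib and Section~4.6 of the q-theory book, so there is no internal argument to compare yours against. Judged on its own merits, your proof is correct and is essentially the classical one. Both halves check: evaluating $\equiv_{\RM,J}$ at $x=e_J$ gives injectivity on maximal subgroups and the identities $ef=e$, $fe=f$ for $\eL$-equivalent idempotents give separation of regular $\eL$-classes; in the maximality direction, Case~1 works because $\psi\bigl((xtz)^n\bigr)=\psi(xsz)^n=\psi(xsz)$ for the idempotent power, and $(xtz)^n$ lies strictly $\J$-below $J$ while $xsz\in J$; Case~2a works since $\psi(e')\eL\psi(xs)=\psi(xt)\eL\psi(f')$ for idempotents $e'\eL xs$, $f'\eL xt$; and Case~2b works since right translation by an inverse $z$ of $xs$ carries $H_{xs}$ bijectively onto the maximal subgroup at the idempotent $(xs)z$ while $\psi((xs)z)=\psi((xt)z)$. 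Two remarks. First, you should say explicitly that every subgroup of $S$ is contained in a maximal subgroup $G_e$, so that checking injectivity on the $G_{e_J}$ suffices; similarly, in Case~2a it is worth noting the argument is even shorter under the weaker reading of ``separates'' (distinct regular $\eL$-classes are not merged), since then $xs\equiv xt$ is itself the contradiction. Second, the step you flag as delicate in the $\GGM$ case is actually where that case \emph{simplifies}: if $xsy,xty\in J$ with $x,y\in J$, then stability on both sides gives $xsy\mathrel{\mathscr R}x$ and $xsy\eL y$, hence $xsy\HH xty$ automatically, so the ``different $\eL$-class'' subcase has no analogue and only your Case~1 (where separation of regular $\J$-classes enters, now via $e\equiv_{\GGM}ufv$ evaluated at $x=y=e$ to get $e=eufve\in S^1fS^1$, rather than via the idempotent identity $ef=e$) and the maximal-subgroup case survive. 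The $\GGM$ argument is thus, if anything, easier than the $\RM$ one, and the $\LM$ case is indeed pure duality.
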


For many more details on these important ideas see the aforementioned references~\cite[Section 4.6]{qtheor},~\cite[Chapter 7-8]{Arbib}. We close the appendix with Rhodes's beautiful theorem calculating the intersection of all congruences of morphisms of a semigroup $S$ to an irreducible semigroup of matrices over the complex numbers. This congruence is called the (complex) Rhodes Radical of $S$. For a generalization to characteristic $p>0$, see~\cite{repbook}.

\begin{Thm}
Let $S$ be a semigroup. The Rhodes Radical is equal to the congruence $\equiv_{\GGM}$. Therefore, $S$ has a faithful completely reducible complex representation if and only if $\equiv_{\GGM}$ is the identity congruence.
\end{Thm}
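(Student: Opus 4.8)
The plan is to recast the Rhodes Radical as the intersection, over all irreducible complex matrix representations $\rho$ of $S$, of the congruences $\ker\rho$ (where $s\mathrel{\ker\rho}t$ means $\rho(s)=\rho(t)$), and to prove equality with $\equiv_{\GGM}$ by organizing the irreducibles according to their apex. Recall from the Clifford--Munn--Ponizovskii theory that every irreducible complex representation $\rho$ of a finite semigroup has a unique \emph{apex}, a regular $\mathscr J$-class $J$, and that for fixed $J$ the apex-$J$ irreducibles are parametrized by the irreducible characters of the maximal subgroup $G_J$. The whole theorem would then follow from one Key Lemma: for each regular $\mathscr J$-class $J$, the common kernel congruence of all apex-$J$ irreducibles is exactly $\equiv_J$. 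Granting this, since each irreducible has a unique apex, the intersection of all $\ker\rho$ equals $\bigcap_J{\equiv_J}=\equiv_{\GGM}$, and the final biconditional is immediate because a faithful completely reducible representation exists precisely when this intersection is trivial.

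One inclusion of the Key Lemma, that $\equiv_J$ is contained in the common kernel of the apex-$J$ irreducibles, I would obtain from the standard fact that an irreducible representation with apex $J$ factors through $\gamma_J\colon S\to\GGM_J(S)=S/{\equiv_J}$: the matrix $\rho(s)$ is determined by the two-sided action of $s$ on $J$, which is exactly the data recorded by $\gamma_J$, so $s\equiv_J t$ forces $\rho(s)=\rho(t)$.

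The reverse inclusion is the heart of the matter and the main obstacle: I must show that the apex-$J$ irreducibles, viewed as representations of the $\GGM$ semigroup $\GGM_J(S)$, jointly separate its points. Here I would pass to the $0$-minimal ideal $I\cong\mathscr M^0(G,A,B,C)$ of $\GGM_J(S)$, on which it acts faithfully on both sides, and use the Munn/Rees identification of the contracted algebra $\mathbb C_0[I]$ with a Munn algebra of matrices over $\mathbb C[G]$ twisted by the sandwich matrix $C$; for each irreducible character $\chi$ of $G$ the apex-$J$ irreducible attached to $\chi$ is built from $C$ evaluated through $\chi$. The decisive point is that the $\GGM$ hypothesis, via Theorem~\ref{t:proportional}, says precisely that no two columns of $C$ are right proportional and no two rows are left proportional, and I would argue that this non-proportionality makes the collection of apex-$J$ irreducibles jointly faithful on $I$, after which faithfulness transports to all of $\GGM_J(S)$ via its two-sided faithful action on $I$. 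Pinning down exactly how non-proportionality of $C$ translates, through each character of $G$, into this joint faithfulness --- equivalently, how it controls the radical of $\mathbb C_0[I]$ --- is the technical core of Rhodes's theorem, and for this I would appeal to the Clifford--Munn--Ponizovskii machinery summarized in the cited references.
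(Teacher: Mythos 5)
First, a point of comparison: the paper does not prove this theorem at all. It sits in the appendix, which states explicitly that no proofs are included, and the result is attributed to Rhodes~\cite{Rhodeschar}, with~\cite{repbook} and~\cite[Section~4.6]{qtheor} cited for modern treatments. So your proposal can only be measured against the proof in those sources, and your architecture is essentially that proof: organize irreducibles by apex (Clifford--Munn--Ponizovskii), reduce to the per-$J$ Key Lemma that the common kernel congruence of the apex-$J$ irreducibles is exactly $\equiv_J$, obtain one inclusion from the standard fact that apex-$J$ irreducibles factor through $\gamma_J\colon S\to\GGM_J(S)$, and obtain the other by passing to the $0$-minimal ideal $I\cong\mathscr M^0(G,A,B,C)$ of $\GGM_J(S)$ and its contracted algebra $\mathbb C_0[I]$. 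This outline is correct, including the transport step at the end (for which one-sided faithfulness of $\GGM_J(S)$ on $I$ already suffices, combined with the standard fact that irreducible $\mathbb C_0[I]$-modules extend to $\GGM_J(S)$-modules).

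The genuine gap is the one you flag yourself: the ``decisive point'' --- that non-proportionality of the rows and columns of $C$ (Theorem~\ref{t:proportional}) forces the apex-$J$ irreducibles to be jointly faithful on $I$ --- is asserted and deferred to ``the CMP machinery in the cited references.'' But that step \emph{is} Rhodes's theorem; everything else in your outline is routine by comparison, so deferring it leaves the core unproved. To close it: since $\mathbb C[G]$ is semisimple (this is precisely where characteristic $0$ enters --- in characteristic $p$ the statement fails and the radical congruence is strictly coarser than $\equiv_{\GGM}$), Munn's computation identifies the radical of $\mathbb C_0[I]$, viewed as $A\times B$ matrices over $\mathbb C[G]$ with product $X\circ Y=XCY$, as $\{X\mid CXC=0\}$. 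Hence $(a,g,b)$ and $(a',g',b')$ have the same image under every irreducible representation if and only if the outer products agree, that is, $C_{\beta a}\,g\,C_{b\alpha}=C_{\beta a'}\,g'\,C_{b'\alpha}$ for all $\alpha\in A$ and $\beta\in B$. Choosing $\beta$ with $C_{\beta a}\neq 0$ and using that every row of $C$ has a nonzero entry shows rows $b$ and $b'$ are left proportional, so $b=b'$ by the $\GGM$ hypothesis; dually $a=a'$; and then choosing $\alpha,\beta$ with $C_{\beta a}\neq 0\neq C_{b\alpha}$ gives $g=g'$. The same formula shows no nonzero element of $I$ is identified with $0$. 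With this lemma inserted, your argument becomes a complete proof, and it is the proof found in the literature the paper cites.
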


\section*{Acknowledgments}

The authors thank the referee for a thorough reading of the paper. We also thank Peter Cameron for a discussion of the properties of the outer automorphism of $S_6$.

\bibliographystyle{abbrv}
\bibliography{standard2stu}

\end{document}